\newcolumntype{P}[1]{>{\centering\arraybackslash}p{#1}}
\theoremstyle{plain}
\newtheorem{theorem}[equation]{Theorem}
\newtheorem{proposition}[equation]{Proposition}
\newtheorem{lemma}[equation]{Lemma}
\newtheorem{corollary}[equation]{Corollary}
\theoremstyle{remark}
\newtheorem{remark}[equation]{Remark}
\numberwithin{equation}{section}
\newcommand{\dee}{\partial}
\newcommand{\w}{\wedge}
\renewcommand{\hat}{\widehat}
\renewcommand{\bar}{\overline}
\newcommand\reallywidehat[1]{%
\savestack{\tmpbox}{\stretchto{%
  \scaleto{%
    \scalerel*[\widthof{\ensuremath{#1}}]{\kern.1pt\mathchar"0362\kern.1pt}%
    {\rule{0ex}{\textheight}}
  }{\textheight}%
}{2.4ex}}%
\stackon[-6.9pt]{#1}{\tmpbox}%
}
\newcommand{\dbar}{\bar \partial}
\newcommand{\im}{\text{Im}}
\def\norm#1{\left\Vert#1\right\Vert}
\def\Gammaf#1{\Gamma\left(#1\right)}
\newcommand{\ch}{{\mathcal H}}
\newcommand{\ci}{{\mathcal I}}
\newcommand{\cs}{{\mathcal S}}
\newcommand{\sL}{{\mathscr L}}
\newcommand{\C}{{\mathbb C}}
\newcommand{\R}{{\mathbb R}}
\newcommand{\Z}{{\mathbb Z}}
\begin{document}

\title[Leray polygamma inequalities]{The Leray transform: distinguished measures, symmetries and polygamma inequalities}
\author{Luke D. Edholm  \& Yonatan Shelah}
\begin{abstract}
New symmetries, norm computations and spectral information are obtained for the Leray transform on a class of unbounded hypersurfaces in $\C^2$.
Emphasis is placed on certain distinguished measures, with results on operator norm monotonicity established by proving new polygamma inequalities.
Classical techniques of Bernstein-Widder and Euler-Maclaurin play crucial roles in our analysis.
Underpinning this work is a projective geometric theory of duality, which manifests here in the form of Hölder invariance. 
\end{abstract}
\thanks{The first author was supported by FWF Grants DOI 10.55776/P36884 and DOI 10.55776/I4557. \\
\indent The second author was supported by ERC Advanced Grant 101053085\\
\indent{\em 2020 Mathematics Subject Classification:} 32A26 (Primary); 26D15; 33B15; 47A30; 65B15 (Secondary)}
\address{Department of Mathematics\\Universit\"at Wien, Vienna, Austria}
\email{luke.david.edholm@univie.ac.at}
\address{Department of Mathematics\\University of Ljubljana, Slovenia}
\email{jonathan.shelah@fmf.uni-lj.si; yonshel@umich.edu}

\maketitle

\section{Introduction}\label{S:Intro}

The Leray (or Cauchy-Leray) transform $\bm{L}$ is a higher dimensional analogue of the familiar Cauchy transform on planar domains.
Let $\Omega$ be a convex domain in $\C^n$ with $C^2$ boundary $b\Omega := \cs$ and defining function $\rho$.
Given a function $f$ defined on $\cs$, its Leray transform $\bm{L}f$ is the holomorphic function defined for $z \in \Omega$ by the following formula:
\begin{subequations}
\begin{align}
\bm{L}f(z) &= \int_{\zeta \in \cs} f(\zeta) \sL(z,\zeta), \label{E:Leray-transform}\\
\sL(z,\zeta) &= \frac{1}{(2\pi i)^n} \frac{\dee\rho(\zeta)\w (\dbar\dee\rho(\zeta))^{n-1}}{\langle \dee\rho(\zeta),\zeta-z \rangle^n}.
\label{E:Leray-kernel}
\end{align}
\end{subequations}
The $(n,n-1)$-form $\sL$ is called the {\em Leray kernel} and $\langle\cdot,\cdot\rangle$ denotes the ordinary {\em bilinear} pairing of $(1,0)$-forms and vectors.
The operator $\bm{L}$ reproduces holomorphic functions from their boundary values and generates holomorphic functions from more general boundary data. 
It is straightforward to check that formula \eqref{E:Leray-transform} is independent of the choice of defining function $\rho$ used in \eqref{E:Leray-kernel}; see \cite[Section IV.3.2]{Range86}.
Additional background information on this operator is given in Section \ref{S:Leray-transform}.

A body of recent work \cite{Bar16,BarEdh20,BarEdh22,BarLan09,Bol05,LanSte13,LanSte14,LanSte17c,LanSte19} has investigated the mapping and invariance properties of $\bm{L}$ in a variety of settings, with the construction of interesting holomorphic function spaces seen as an important application.
For instance, Barrett and Edholm \cite{BarEdh22} studied $\bm{L}$ on the family of unbounded hypersurfaces
\begin{equation}
M_\gamma = \{(\zeta_1,\zeta_2) \in \C^2: \im{(\zeta_2)} = |\zeta_1|^\gamma \}, \qquad \gamma>1,
\end{equation}
and used their findings to construct dual pairs of projectively invariant Hardy spaces.
In the present work, further analysis of the Leray transform on $M_\gamma$ is carried out and detailed analytic information is obtained.
{\em Throughout the paper, we always assume $\gamma>1$.}

\subsection{Distinguished measures}\label{SS:2measures}

Let us parameterize $M_\gamma$ by $(\zeta_1,\zeta_2) = (r e^{i\theta},s+i r^\gamma)$, where $r \ge 0, \,\, s \in \R, \,\, \theta \in [0,2\pi)$, and consider measures of the form 
$$
\mu_d = r^d\, dr \w d\theta \w ds, \qquad d \in \R.
$$
Two measures are of particular importance to higher-dimensional Cauchy-Leray theory:
the {\em pairing measure} $\sigma$ and the {\em preferred measure} $\nu$; see \cite{Bar16,BarEdh20,BarEdh22}.
On $M_\gamma$, they take the form
\begin{align}
&\sigma = r^{\gamma-1}\, dr \w d\theta \w ds, \qquad (d=\gamma-1), \label{E:pairing-measure} \\
&\nu = r^{\frac{\gamma+1}{3}}\, dr \w d\theta \w ds, \qquad \big(d = \tfrac{\gamma+1}{3}\big). \label{E:preferred-measure}
\end{align}

The pairing measure $\sigma$ arises as (a constant multiple of) the Leray-Levi measure corresponding to the natural defining function $\rho(\zeta) = |\zeta_1|^\gamma - \im(\zeta_2)$; see \eqref{E:Leray-Lev-measure}.
This measure also appears in an alternative description of the Leray transform, facilitating a natural bilinear pairing between projectively invariant dual Hardy spaces; see \cite[Proposition 6.18]{BarEdh22}.
The preferred measure $\nu$ admits a projective transformation law; see \eqref{E:pref-measure-trans-law}. 
This leads to a corresponding Leray transformation law in \eqref{E:Leray-trans-law}, and to the definition of the aforementioned Hardy spaces in \eqref{E:def-invariant-Hardy}.
These notions were introduced by Barrett \cite[Sections 7 and 8]{Bar16} and developed further in \cite{BarEdh20,BarEdh22}. 
See \cite[Section 2]{BarEdh22} for a detailed discussion in the $M_\gamma$ setting.

\begin{remark}
Note that in the Heisenberg case ($\gamma=2$), the pairing and preferred measures are equal and coincide with ordinary Lebesgue measure $\mu_1$ on the parameter space $\R^3$.
In fact, the Leray transform and the (orthogonal) Szeg\H{o} projection coincide on $L^2(M_2,\mu_1)$.
\hfill $\lozenge$
\end{remark}

In the present article, we show that the behavior of $\bm{L}$ on $M_\gamma$ can be understood rather explicitly.
In particular, its exact norm is computed in a number of settings:

\begin{theorem}\label{T:Intro-norm-comps}

The norm of $\bm{L}$ in $L^2(M_\gamma,\mu_d)$ is determined for many $\gamma$ and $d$, including:
\begin{itemize}

\item[$(a)$] The norm of the Leray transform on $L^2(M_2,\mu_d)$ is given by
\begin{equation*}
\norm{\bm{L}}_{L^2(M_2,\mu_d)} = 
\begin{cases}
\sqrt{ \frac{\pi}{2} (1-d)\sec{\big(\frac{d\pi}{2}\big)} },  &d \in (-1,1)\cup(1,3) \\
 1,  &d = 1.
\end{cases}
\end{equation*}

\item[$(b)$] The norm of the Leray transform on $L^2(M_\gamma,\mu_1)$ is given by
\begin{equation*}
\norm{\bm{L}}_{L^2(M_\gamma,\mu_1)} = 
\begin{cases}
(\gamma-1)^{\frac{1}{\gamma}-1} \sqrt{ \frac{\pi}{4}(\gamma-2)\gamma \csc\big( \frac{2\pi}{\gamma} \big)}, & \gamma \in (1,2)\cup(2,\infty) \\
1, & \gamma = 2.
\end{cases}
\end{equation*}

\item[$(c)$] The norm of the Leray transform on $L^2(M_\gamma,\sigma)$ is given by
\begin{equation*}
\norm{\bm{L}}_{L^2(M_\gamma,\sigma)} = \frac{\gamma}{2\sqrt{\gamma-1}}.
\end{equation*}

\item[$(d)$] The norm of the Leray transform on $L^2(M_\gamma,\nu)$ is given by 
\begin{equation*}
\norm{\bm{L}}_{L^2(M_\gamma,\nu)} = \sqrt{\frac{\gamma}{2\sqrt{\gamma-1}}}.
\end{equation*}
\end{itemize}
\end{theorem}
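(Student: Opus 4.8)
The plan is to reduce the operator-norm computation to an explicit spectral analysis of $\bm{L}$ after diagonalizing with respect to the Fourier/rotational symmetries of $M_\gamma$. First I would exploit the invariance of $M_\gamma$ under the translations $\zeta_2 \mapsto \zeta_2 + t$ ($t \in \R$) and the rotations $\zeta_1 \mapsto e^{i\alpha}\zeta_1$: taking the Fourier transform in the $s$-variable (the real part of $\zeta_2$) and the Fourier series in $\theta$, the space $L^2(M_\gamma,\mu_d)$ decomposes as a direct integral over the dual parameters. Because the Leray kernel $\sL(z,\zeta)$ in \eqref{E:Leray-kernel} is built from $\rho$, which is itself invariant under these symmetries, $\bm{L}$ (and the associated self-adjoint operator $\bm{L}^*\bm{L}$, or the relevant projection-type operator whose norm controls $\norm{\bm{L}}$) is block-diagonalized. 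On each block the operator becomes an explicit integral operator in the single radial variable $r$, and after a change of variables $r \mapsto$ (power of $r$) it should become a convolution operator on a half-line or on $\R$, hence further diagonalized by a Mellin transform. The upshot is a scalar symbol, expressible in terms of Gamma functions, and $\norm{\bm{L}}$ is the supremum of its modulus over the spectral parameters.

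The second step is to carry out the Gamma-function bookkeeping. After Mellin transformation the symbol will be a ratio/product of $\Gamma$-factors in the spectral variables, with the weight exponent $d$ (resp. $\gamma-1$ for $\sigma$, $\tfrac{\gamma+1}{3}$ for $\nu$) entering as a shift. Using the reflection formula $\Gamma(x)\Gamma(1-x) = \pi/\sin(\pi x)$ and the duplication/Legendre formula, the modulus-squared of the symbol collapses to elementary trigonometric expressions; taking the supremum over the continuous spectral parameter yields the closed forms in $(a)$–$(d)$. For part $(a)$ ($\gamma=2$, general $d$) the Heisenberg geometry makes the radial integral a genuine convolution on $\R$, so the symbol is a single Beta-function-type expression and the supremum is attained at the "center" of the strip, producing $\sqrt{\tfrac{\pi}{2}(1-d)\sec(\tfrac{d\pi}{2})}$; the value $1$ at $d=1$ is the orthogonal-projection case noted in the Remark. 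Part $(b)$ fixes $d=1$ and varies $\gamma$; here the change of variables $r \mapsto r^\gamma$ introduces the $(\gamma-1)^{1/\gamma-1}$ prefactor and turns the remaining integral into a convolution whose symbol gives the $\csc(2\pi/\gamma)$ expression. Parts $(c)$ and $(d)$ are the projectively natural weights: for $\sigma$ the symbol should be so symmetric that the supremum is clean, giving $\tfrac{\gamma}{2\sqrt{\gamma-1}}$, and for $\nu$ the $1/3$ in the exponent $\tfrac{\gamma+1}{3}$ is exactly what is needed for the known transformation law \eqref{E:Leray-trans-law}, and one expects $\norm{\bm{L}}_{L^2(\nu)}^2 = \norm{\bm{L}}_{L^2(\sigma)}$ to fall out of comparing the two symbols — hence the square root.

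The main obstacle I expect is \emph{locating the supremum of the symbol} and, for the borderline weights, \emph{justifying that the spectral computation actually computes the operator norm} rather than just a lower bound. Concretely: (i) the symbol is a function of one real spectral variable (after the trig simplification) on a bounded or half-infinite interval, and one must prove the relevant $\Gamma$-ratio is maximized at the expected endpoint or interior critical point; this is where genuinely new inequalities enter — in the spirit of the polygamma inequalities advertised in the abstract, one differentiates $\log|\text{symbol}|$ and shows the derivative has a single sign via monotonicity of $\psi$ and its derivatives (Bernstein–Widder complete monotonicity, Euler–Maclaurin remainder estimates). (ii) For $d$ near the endpoints of $(-1,1)\cup(1,3)$ and for $\gamma$ near $1$ or $\infty$ one must check the operator is still bounded and that no mass escapes to the boundary of the spectral parameter space; a density argument (the Fourier–Mellin decomposition is unitary on a dense subspace of nice functions) closes this gap. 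Once the symbol is identified and its sup pinned down, the four formulas are immediate substitutions.
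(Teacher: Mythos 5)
Your overall strategy --- block-diagonalize $\bm{L}$ via the rotational and translational symmetries, obtain a scalar symbol involving $\Gamma$-factors, and then locate the supremum of that symbol using polygamma/Bernstein--Widder/Euler--Maclaurin estimates --- is the same broad framework the paper uses. You are also right that the heart of the matter is establishing monotonicity of the symbol. But there are two concrete gaps.

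First, the spectral parameter over which one takes the supremum is the \emph{discrete} Fourier-mode index $k$, not a continuous Mellin variable. The rotational symmetry in $\zeta_1$ gives $\bm{L} = \bigoplus_{k\ge 0}\bm{L}_k$, and the Mellin transform is only used inside each block to compute $\|\bm{L}_k\| = \sqrt{J(d,\gamma,k)}$ (a fact quoted from \cite{BarEdh22}). The supremum in \eqref{E:Leray-norm-as-sup} is over non-negative integers $k$. So the real problem is not ``where is the $\Gamma$-ratio maximized on a strip'' but whether $k\mapsto J(d,\gamma,k)$ is monotone. For $(a)$--$(c)$ the symbol is shown to be \emph{decreasing} in $k$ (mean-value theorem on $\psi$ for $\gamma=2$; Bernstein--Widder complete monotonicity of an auxiliary function for general $\gamma$ at $d\in\{1,\gamma-1\}$), so the norm is attained at $k=0$ and is just $\sqrt{J(d,\gamma,0)}$; the reflection formula then yields the closed forms.

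Second, and more seriously, your proposed mechanism for $(d)$ --- that $\|\bm{L}\|_{L^2(\nu)}^2 = \|\bm{L}\|_{L^2(\sigma)}$ should ``fall out of comparing the two symbols'' --- is not how the answer arises, and I see no identity between $C_\nu$ and $C_\sigma$ that would support it. In fact the preferred symbol $k\mapsto C_\nu(\gamma,k)$ is strictly \emph{increasing} (Theorem \ref{T:preferred-symbol-mono-increase}), so the supremum is not attained at any $k$: the norm is the limit $\lim_{k\to\infty}\sqrt{J(d,\gamma,k)}$, which equals $\sqrt{\gamma/(2\sqrt{\gamma-1})}$ by the high-frequency stabilization result (Proposition \ref{P:HF-norm}), a limit that is independent of $d$. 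The numerical coincidence that this is the square root of $\sqrt{C_\sigma(\gamma,0)} = \gamma/(2\sqrt{\gamma-1})$ comes from two entirely different evaluations ($k=0$ on one side, $k\to\infty$ on the other), not from a duality of symbols. Proving the increasing monotonicity is the hardest part of the paper: the Bernstein--Widder route fails for the corresponding parameter $q=\tfrac{2}{3}$ (the auxiliary function is \emph{not} completely monotone there; see Remark \ref{R:improving-the-endpoint}), and one instead needs the Euler--Maclaurin analysis of Section \ref{S:EM-pref-symbol-function} together with explicit degree-16 polynomial estimates. Your proposal would not surface this increasing behavior and would therefore miss the correct value of the norm.
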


While the proofs of the parts of Theorem \ref{T:Intro-norm-comps} employ different techniques that appear throughout the paper, they all depend critically on analysis of the Leray symbol function $J(d,\gamma,k)$ defined in \eqref{E:def-symbol-function} below.
The proof of part $(a)$ is found in Corollary \ref{C:Leray-norm-comp-gamma=2}.
Parts $(b)$ and $(c)$ are found in Section \ref{S:series-analysis}, where the Bernstein-Widder theorem and the notion of complete montonicity play important roles; see Corollaries \ref{C:Leray-norm-pairing} and \ref{C:Leray-norm-Lebesgue}, which themselves are ``endpoint cases" of the more general Theorem \ref{T:Leray-boundedness-from-Bernstein-Widder}.
The preferred measure result, part $(d)$, is the most difficult to prove.
Starting in Section~\ref{S:EM-pref-symbol-function}, we use the Euler-Maclaurin summation formula to carry out lengthy analysis, eventually transforming the problem into concrete questions about the positivity of certain explicit polynomials.
The calculation of the norm of $\bm{L}$ with respect to the preferred measure $\nu$ concretely answers an open question on a natural pairing of projective dual Hardy spaces on $M_\gamma$; see Proposition \ref{P:efficiency-of-pairing}.

To contextualize our results and methods, we provide a short summary of previously known facts about the Leray transform acting on $L^2(M_\gamma,\mu_d)$.

\subsection{The Leray symbol function}\label{SS:Known-results-on-Mgamma}

In \cite{BarEdh22}, the rotational symmetry in $\zeta_1$ yields an orthogonal decomposition of $\bm{L}$ on $L^2(M_\gamma,\mu_d)$ into {\em sub-Leray operators} $\bm{L}_k$; see Section \ref{SS:Leray-transform-background}:
\begin{equation*}
\bm{L} = \bigoplus_{k=0}^\infty \bm{L}_k.
\end{equation*}

We recall for non-negative $k \in \Z$, the {\em Leray symbol function} of $L^2(M_\gamma,\mu_d)$:
\begin{equation}\label{E:def-symbol-function}
J(d,\gamma,k) = \frac{\Gamma\big(\frac{2k+1+d}{\gamma}\big) \Gamma\big(2k+2 - \frac{2k+1+d}{\gamma}\big)}{\Gamma(k+1)^2} \left( \frac{\gamma}{2}\right)^{2k+2} (\gamma-1)^{-\left( 2k+2 - \frac{2k+1+d}{\gamma} \right)}.
\end{equation}

\begin{proposition}[Barrett-Edholm \cite{BarEdh22}]\label{P:norm-Lk}
$\bm{L}_k$ is bounded on $L^2(M_\gamma,\mu_d)$ if and only if 
\begin{equation}\label{E:increasing-intervals}
d \in ( -2k-1 \,,\, (2k+2)(\gamma-1)+1 ) := \ci_k(\gamma).
\end{equation}
When $d$ is taken in this interval,
\begin{equation}\label{E:norm_Lk-sqrt}
\norm{\bm{L}_k}_{L^2(M_\gamma,\mu_d)} = \sqrt{J(d,\gamma,k)}.
\end{equation}
\end{proposition}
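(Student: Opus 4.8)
The plan is to compute the norm of each sub-Leray operator $\bm{L}_k$ directly from the explicit action of $\bm{L}$ on the natural orthogonal basis of $L^2(M_\gamma,\mu_d)$, reducing the boundedness question to convergence of a single integral. First I would recall (from the rotational symmetry in $\zeta_1$, as in \cite{BarEdh22}) that the holomorphic monomials restricted to $M_\gamma$ are mutually orthogonal in $L^2(M_\gamma,\mu_d)$, and that each sub-Leray operator $\bm{L}_k$ acts on the $k$-th rotational sector. Using the parameterization $(\zeta_1,\zeta_2) = (re^{i\theta}, s + ir^\gamma)$ and the Fourier transform in the $s$-variable, I would diagonalize $\bm{L}_k$ against the family $z_1^k z_2^{\,m}$ (or a continuous Laplace-type analogue in $s$); the Leray kernel \eqref{E:Leray-kernel}, specialized to the defining function $\rho(\zeta) = |\zeta_1|^\gamma - \im(\zeta_2)$, then produces an integral over $r \ge 0$ of the form $\int_0^\infty r^{\,a-1} e^{-c r^\gamma}\,dr$ after the $\theta$- and $s$-integrations are carried out. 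Such integrals evaluate, via the substitution $u = r^\gamma$, to Gamma functions, and convergence requires the exponent $a$ to be positive.

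Next I would carry out the bookkeeping of exponents. The numerator of the Leray kernel contributes a fixed power of $r$ coming from $\dee\rho \wedge (\dbar\dee\rho)$, the denominator $\langle\dee\rho,\zeta-z\rangle^n$ contributes negative powers after expansion, the measure $\mu_d$ contributes $r^d\,dr$, and the reproducing/orthogonality computation pairs the $k$-th sector basis element against itself, contributing $r^{2k}$. Collecting these, the relevant Gamma-function arguments are exactly $\frac{2k+1+d}{\gamma}$ and $2k+2 - \frac{2k+1+d}{\gamma}$, and the powers of $\gamma/2$ and $\gamma-1$ arise from the Jacobian $u = r^\gamma$ and from evaluating $\langle\dee\rho(\zeta),\zeta-z\rangle$ on the hypersurface (where the $r^\gamma$-term in $\im(\zeta_2)$ and the $|\zeta_1|^\gamma$-term combine with coefficient governed by $\gamma-1$). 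Matching these against \eqref{E:def-symbol-function} identifies $\|\bm{L}_k\|^2$ with $J(d,\gamma,k)$ whenever the defining integral converges, i.e.\ whenever both Gamma arguments are positive: $\frac{2k+1+d}{\gamma} > 0$ forces $d > -2k-1$, and $2k+2 - \frac{2k+1+d}{\gamma} > 0$ forces $d < (2k+2)(\gamma-1)+1$, which is precisely the interval $\ci_k(\gamma)$ in \eqref{E:increasing-intervals}. Conversely, if $d$ lies outside $\ci_k(\gamma)$, the relevant integral diverges and one exhibits a sequence of test functions (e.g.\ approximations concentrating where the kernel fails to be integrable) showing $\bm{L}_k$ is unbounded; equivalently, the diagonal eigenvalues of $\bm{L}_k^*\bm{L}_k$ in the monomial basis are unbounded.

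The main obstacle I anticipate is not the Gamma-function algebra but rather the justification that $\bm{L}_k$ is genuinely \emph{diagonalized} (or has a clean singular-value decomposition) with respect to the $s$-Fourier variable together with the holomorphic monomial in $\zeta_2$ — in other words, verifying that the formal computation on basis elements actually computes the operator norm and not merely a lower bound. This requires care because the basis is a continuum (indexed by the $s$-frequency), so one works with the fiberwise operators after the Fourier transform in $s$: for each frequency $\eta$ one gets a rank-one or scalar fiber operator whose norm is an explicit expression in $\eta$, and $\|\bm{L}_k\|$ is the essential supremum over $\eta$ of these fiber norms. Showing that this supremum equals $\sqrt{J(d,\gamma,k)}$ — in particular that the fiber norm is constant in $\eta$, which should follow from the scaling symmetry $(\zeta_1,\zeta_2) \mapsto (\lambda^{1/\gamma}\zeta_1, \lambda\zeta_2)$ of $M_\gamma$ — is the step demanding the most care. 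Once the scaling symmetry is invoked to reduce to a single fiber (say $\eta = 1$), the remaining computation is the Gamma-integral evaluation sketched above, and the stated formula \eqref{E:norm_Lk-sqrt} follows. Since this is exactly the content attributed to \cite{BarEdh22}, I would present the argument as a recollection, citing the relevant computations there and re-deriving only the symbol-function identification needed for the present paper.
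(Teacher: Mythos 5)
The paper does not prove this proposition itself --- it is imported verbatim from \cite{BarEdh22} (see Section \ref{SS:Leray-transform-background}) --- and your sketch correctly reconstructs the strategy of that reference: Fourier transform in $s$, rotational decomposition into the $k$-th sector, reduction to a single fiber via the anisotropic scaling of $M_\gamma$, and evaluation of the resulting weighted Gamma integrals, whose convergence conditions are exactly the two endpoint inequalities defining $\ci_k(\gamma)$. Your identification of the delicate step (that the fiber-norm computation genuinely yields the operator norm, which in \cite{BarEdh22} follows because each fiber operator is an explicit rank-one skew projection whose norm is attained) is also on target, so the proposal is consistent with the cited proof.
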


The above intervals are nested, i.e., $\ci_{k}(\gamma) \subset \ci_{k+1}(\gamma)$, and they exhaust the real line as $k \to \infty$. 
Thus, for any given $d$, at most a finite number of  $\bm{L}_k$ are unbounded on $L^2(M_\gamma,\mu_d)$.
Remarkably, the symbol function (and therefore the norm of the $\bm{L}_k$) stabilizes to the same value as $k \to \infty$, independent of the choice of $d$:
\begin{proposition}[Barrett-Edholm \cite{BarEdh22}]\label{P:HF-norm}
For any $d \in \R$, the following limit holds
\begin{equation*}
\lim_{k\to\infty} \norm{\bm{L}_k}_{L^2(M_\gamma,\mu_d)} = \sqrt{\frac{\gamma}{2\sqrt{\gamma-1}}}.
\end{equation*}
\end{proposition}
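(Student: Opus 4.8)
The plan is to compute the limit directly from the closed-form expression for the symbol function $J(d,\gamma,k)$ in \eqref{E:def-symbol-function}, since by Proposition \ref{P:norm-Lk} we have $\norm{\bm{L}_k}_{L^2(M_\gamma,\mu_d)} = \sqrt{J(d,\gamma,k)}$ for all sufficiently large $k$ (once $d$ enters the nested interval $\ci_k(\gamma)$, which happens for $k$ large because the $\ci_k(\gamma)$ exhaust $\R$). Thus it suffices to show that
\[
\lim_{k\to\infty} J(d,\gamma,k) = \frac{\gamma}{2\sqrt{\gamma-1}},
\]
and then take square roots, using continuity of $\sqrt{\,\cdot\,}$.

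First I would isolate the $k$-dependence. Write $a_k = \frac{2k+1+d}{\gamma}$, so that $a_k = \frac{2k}{\gamma} + O(1)$ and the second Gamma argument is $b_k := 2k+2-a_k = 2k\bigl(1-\frac1\gamma\bigr) + O(1)$. Then
\[
J(d,\gamma,k) = \frac{\Gamma(a_k)\,\Gamma(b_k)}{\Gamma(k+1)^2}\left(\frac{\gamma}{2}\right)^{2k+2}(\gamma-1)^{-b_k}.
\]
The main step is an asymptotic analysis via Stirling's formula $\Gamma(x) \sim \sqrt{2\pi}\, x^{x-1/2} e^{-x}$ as $x\to\infty$. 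Applying this to $\Gamma(a_k)$, $\Gamma(b_k)$ and $\Gamma(k+1)$, I would track the exponential, power-law, and constant factors separately. The exponential factors $e^{-a_k}e^{-b_k}e^{2(k+1)} = e^{-(2k+2)}e^{2k+2} = 1$ cancel exactly (since $a_k + b_k = 2k+2$). The dominant power-law behavior $a_k^{a_k} b_k^{b_k} / (k+1)^{2k+2}$ combines with $(\gamma/2)^{2k+2}(\gamma-1)^{-b_k}$; substituting $a_k \approx \frac{2k}{\gamma}$, $b_k \approx \frac{2k(\gamma-1)}{\gamma}$ and $k+1\approx k$, one finds that the base of the resulting $(\cdots)^{2k}$-type expression is
\[
\left(\frac{2}{\gamma}\right)^{1/\gamma}\left(\frac{2(\gamma-1)}{\gamma}\right)^{(\gamma-1)/\gamma}\cdot\frac{\gamma}{2}\cdot(\gamma-1)^{-(\gamma-1)/\gamma} \;=\; \left(\frac{2}{\gamma}\right)^{1/\gamma}\left(\frac{2}{\gamma}\right)^{(\gamma-1)/\gamma}\cdot\frac{\gamma}{2} \;=\; \frac{2}{\gamma}\cdot\frac{\gamma}{2} \;=\; 1,
\]
so the would-be exponentially growing/decaying factor is $1^{k}$ up to subexponential corrections. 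The surviving contribution comes from the remaining $\sqrt{2\pi}$-factors and the half-integer powers in Stirling: $\frac{\sqrt{2\pi}\,a_k^{-1/2}\cdot\sqrt{2\pi}\,b_k^{-1/2}}{2\pi\,(k+1)^{-1}} = \frac{k+1}{\sqrt{a_k b_k}} \to \frac{k}{\sqrt{(2k/\gamma)(2k(\gamma-1)/\gamma)}} = \frac{\gamma}{2\sqrt{\gamma-1}}$, together with the leftover factor $(\gamma/2)^2(\gamma-1)^{-(\text{const})}$ from the "$+2$" in the exponents, which I must bookkeep carefully and which should contribute exactly the right constant. The careful part is verifying that all the $O(1/k)$ corrections inside the bases — e.g. replacing $a_k$ by $\frac{2k+1+d}{\gamma}$ rather than $\frac{2k}{\gamma}$ — contribute only to lower order and do not shift the limit; this is where the $d$-independence becomes visible, since $d$ and the additive constants all sit in the subleading corrections.

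The main obstacle I anticipate is the bookkeeping in the power-law term: because $a_k$ and $b_k$ grow like $k$, the expression $a_k^{a_k}$ is delicate, and a naive expansion $a_k^{a_k} = (2k/\gamma)^{a_k}\cdot(1 + O(1/k))^{a_k}$ requires justifying that $(1+O(1/k))^{a_k} = (1+O(1/k))^{O(k)}$ stays bounded and in fact tends to an explicit constant (here $e^{c}$ for an appropriate $c$) — these exponential-of-constant factors must be collected from $\Gamma(a_k)$, $\Gamma(b_k)$ and $\Gamma(k+1)^2$ and shown to cancel. A clean way to organize this is to take logarithms: set $L_k = \log J(d,\gamma,k)$, expand each $\log\Gamma$ by Stirling with explicit error $\log\Gamma(x) = (x-\tfrac12)\log x - x + \tfrac12\log(2\pi) + O(1/x)$, and show $L_k \to \log\frac{\gamma}{2\sqrt{\gamma-1}}$ by grouping terms into (i) those linear in $k$ inside a logarithm, which must cancel, (ii) constant terms, and (iii) vanishing error terms. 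Alternatively, one can avoid Stirling entirely by invoking the limit $\lim_{x\to\infty}\frac{\Gamma(x+\alpha)}{\Gamma(x)\,x^\alpha}=1$ together with the duplication/reflection formulae to telescope the ratio, but I expect the logarithmic-Stirling route to be the most transparent. Either way, once $L_k$ converges, exponentiating and taking square roots gives the claim.
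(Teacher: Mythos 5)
The paper itself does not reprove Proposition~\ref{P:HF-norm}; it is quoted from Barrett--Edholm \cite{BarEdh22} without proof, so there is no internal argument to compare against. Your Stirling-based computation is, however, correct and does produce the stated limit, and the logarithmic-Stirling route you suggest at the end is the right way to organize it.

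A few remarks to tighten the write-up. Setting $n=k+1$, $a_k=\tfrac{2n+(d-1)}{\gamma}$, $b_k=\tfrac{2n(\gamma-1)-(d-1)}{\gamma}$ (so $a_k+b_k=2n$) and expanding $\log\Gamma$ with the Stirling remainder, the $O(n)$ terms, the $O(\log n)$ terms, and all $d$-dependent $O(1)$ terms cancel exactly, leaving $\log J(d,\gamma,k)\to \log\gamma-\log 2-\tfrac12\log(\gamma-1)$, which is the claim. One passage in your sketch is slightly misleading: you say the half-integer powers alone give $\tfrac{k+1}{\sqrt{a_k b_k}}\to\tfrac{\gamma}{2\sqrt{\gamma-1}}$ \emph{and} that there is ``a leftover factor $(\gamma/2)^2(\gamma-1)^{-(\text{const})}$'' which ``should contribute exactly the right constant'' --- if the half-integer piece already produced the full answer, those leftover factors must combine to $1$, and indeed they do once you also expand $a_k^{a_k}$ and $b_k^{b_k}$ beyond their leading base, picking up the extra $e^{\pm(d-1)/\gamma}$ and $(\gamma-1)^{\pm(d-1)/\gamma}$ corrections that cancel against each other and against the $(\gamma-1)^{(d-1)/\gamma}$ hidden in $(\gamma-1)^{-b_k}$. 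This is precisely the ``delicate bookkeeping'' you flag, and it is cleanest to do in log form as you propose. Finally, your appeal to the nested intervals $\ci_k(\gamma)$ exhausting $\R$ (to ensure $J(d,\gamma,k)$ is finite for $k$ large) is exactly the right justification for applying \eqref{E:norm_Lk-sqrt} in the limit.
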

In Figure \ref{Im:plots-of-J(d,5,k)} below, the stabilization of the symbol function can be seen for several different $d$ values in the case $\gamma=5$.
There is an intriguing relationship between the limit in Proposition~\ref{P:HF-norm} and a projective geometric invariant on $M_\gamma$. 
This is related to the Leray essential norm conjecture; see \cite[Section 2.3]{BarEdh22}.

The range of $d$ for which the full operator $\bm{L}$ is bounded on $L^2(M_\gamma,\mu_d)$ can now be deduced from the previous two propositions:

\begin{corollary}[Barrett-Edholm \cite{BarEdh22}]\label{C:Leray-boundedness}
The Leray transform $\bm{L}$ is a bounded operator from $L^2(M_\gamma,\mu_d)$ to itself if and only if $d \in (-1,2\gamma-1) = \ci_0(\gamma)$.
\end{corollary}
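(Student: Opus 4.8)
The plan is to reduce everything to the orthogonal decomposition $\bm{L} = \bigoplus_{k\ge 0}\bm{L}_k$ on $L^2(M_\gamma,\mu_d)$ and to the two propositions just stated. The first step is to record the elementary Hilbert-space principle that governs such decompositions: since each $\bm{L}_k$ is the restriction of $\bm{L}$ to a closed reducing subspace, one has $\|\bm{L}_k\| \le \|\bm{L}\|$; conversely, writing $f = \sum_k f_k$ with $f_k$ in the $k$-th summand, orthogonality gives $\|\bm{L}f\|^2 = \sum_k \|\bm{L}_k f_k\|^2 \le \big(\sup_k \|\bm{L}_k\|^2\big)\|f\|^2$. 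Hence $\|\bm{L}\| = \sup_{k\ge 0}\|\bm{L}_k\|$ in $[0,+\infty]$, so $\bm{L}$ is bounded exactly when every $\bm{L}_k$ is bounded and the supremum of their norms is finite.

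For the ``only if'' direction I would argue by contrapositive: if $d \notin \ci_0(\gamma)$, then Proposition~\ref{P:norm-Lk} says $\bm{L}_0$ is unbounded on $L^2(M_\gamma,\mu_d)$, and since $\bm{L}_0$ is a restriction of $\bm{L}$ to a closed invariant subspace, $\bm{L}$ is unbounded as well. Thus boundedness of $\bm{L}$ forces $d \in \ci_0(\gamma)$.

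For the ``if'' direction, suppose $d \in \ci_0(\gamma)$. The nesting $\ci_0(\gamma) \subset \ci_1(\gamma) \subset \cdots$ recorded after Proposition~\ref{P:norm-Lk} gives $d \in \ci_k(\gamma)$ for every $k$, so by that proposition each $\bm{L}_k$ is bounded with $\|\bm{L}_k\|_{L^2(M_\gamma,\mu_d)} = \sqrt{J(d,\gamma,k)}$. It remains only to see that $\sup_{k\ge 0}\sqrt{J(d,\gamma,k)} < \infty$, and this is immediate from Proposition~\ref{P:HF-norm}: the sequence $\sqrt{J(d,\gamma,k)}$ converges to $\sqrt{\gamma/(2\sqrt{\gamma-1})}$ as $k \to \infty$, and a convergent real sequence is bounded. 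Therefore $\|\bm{L}\|_{L^2(M_\gamma,\mu_d)} = \sup_k \|\bm{L}_k\| < \infty$. Combining the two directions yields boundedness of $\bm{L}$ on $L^2(M_\gamma,\mu_d)$ if and only if $d \in \ci_0(\gamma) = (-1,2\gamma-1)$.

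I do not expect a serious obstacle here: all the analytic content has been front-loaded into Propositions~\ref{P:norm-Lk} and~\ref{P:HF-norm}. The one point deserving a moment's care is the passage from per-summand information to a statement about $\bm{L}$ itself — in particular, that a single unbounded summand destroys boundedness of the whole operator, and that finiteness of $\sup_k\|\bm{L}_k\|$ suffices for boundedness — but this is routine once one observes that the decomposition is orthogonal and each summand reducing.
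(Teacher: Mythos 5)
Your argument is correct and is exactly the deduction the paper intends (it states that the corollary "can now be deduced from the previous two propositions"): the identity $\norm{\bm{L}}=\sup_k\norm{\bm{L}_k}$ from the orthogonal decomposition, Proposition~\ref{P:norm-Lk} together with the nesting of the intervals $\ci_k(\gamma)$ for the "only if" and per-summand boundedness, and Proposition~\ref{P:HF-norm} to bound the supremum. Nothing is missing.
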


\begin{remark}
Due to the prominence of measures $\sigma$ and $\nu$, we use alternate notation when the symbol function corresponds to $d=\gamma-1$ and $d=\tfrac{\gamma+1}{3}$, as indicated by \eqref{E:pairing-measure} and \eqref{E:preferred-measure}:
\begin{subequations}
\begin{align}
C_\sigma(\gamma,k) &:= J(\gamma-1,\gamma,k), \label{E:def-of-pairing-symbol} \\
C_\nu(\gamma,k) &:= J\big(\tfrac{\gamma+1}{3},\gamma,k\big). \label{E:def-of-preferred-symbol}
\end{align}
\end{subequations}
We call $C_\sigma$ and $C_\nu$ the {\em pairing symbol} and {\em preferred symbol} functions, respectively.
\hfill $\lozenge$
\end{remark}
 
\subsection{Symbol function monotonicity and symmetries}\label{SS:new-symetries}

The results in Section \ref{SS:Known-results-on-Mgamma} show that $\bm{L}$ is bounded on $L^2(M_\gamma,\mu_d)$ if and only if the sub-Leray operator $\bm{L}_0$ is bounded there.
But the norm of the full Leray transform is determined by maximizing the function 
$$
k \mapsto \sqrt{J(d,\gamma,k)} = \norm{\bm{L}_k}_{L^2(M_\gamma,\mu_d)}
$$
over the positive integers, which is often very difficult.
One way to approach this maximization problem is to investigate when the symbol function is monotone.

In \cite{BarEdh22} it is shown that for $\gamma \neq 2$, the pairing symbol function $k \mapsto C_\sigma(\gamma,k)$ (corresponding to $d = \gamma-1$) is strictly decreasing.
But this is not necessarily true for other values of $d$, as is seen in Figure \ref{Im:plots-of-J(d,5,k)}, which illustrates how the behavior of the symbol function can change as $d$ varies within the boundedness interval $\ci_0(\gamma) = (-1,2\gamma-1)$.

\begin{figure}[htp]
\centering
\includegraphics[width=.3\textwidth]{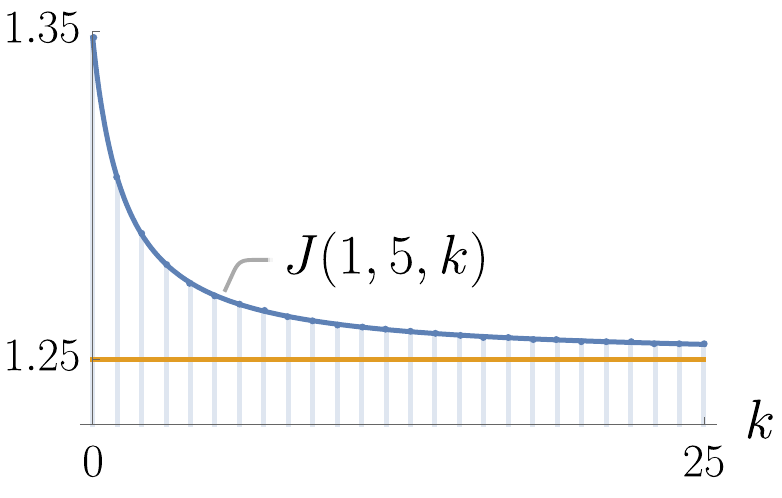}\quad
\includegraphics[width=.3\textwidth]{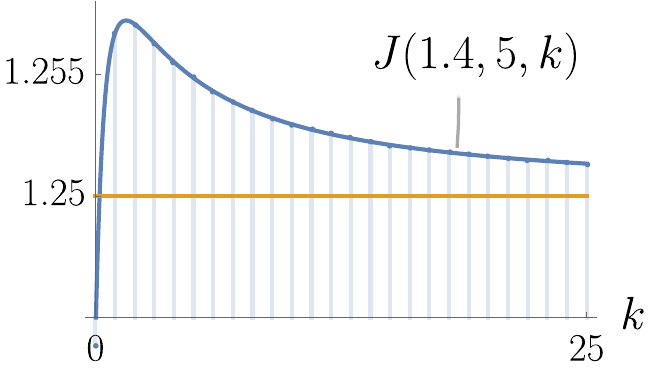}\quad
\includegraphics[width=.3\textwidth]{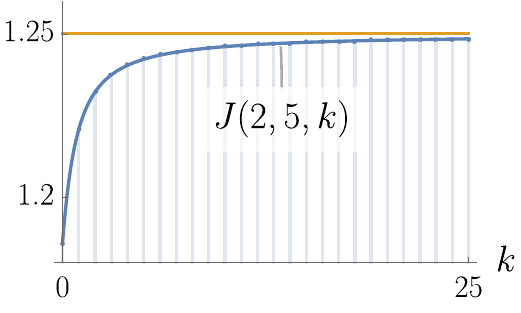}

\medskip

\includegraphics[width=.3\textwidth]{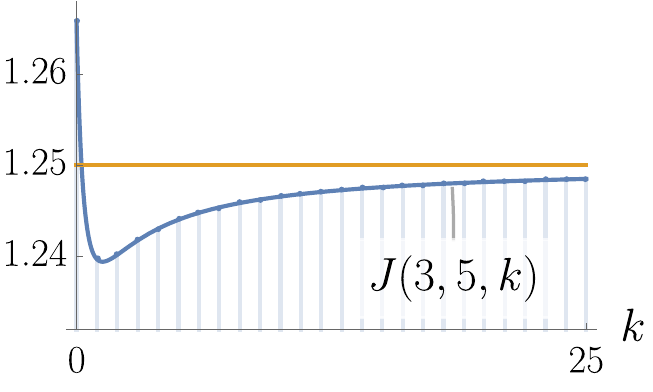}\quad
\includegraphics[width=.3\textwidth]{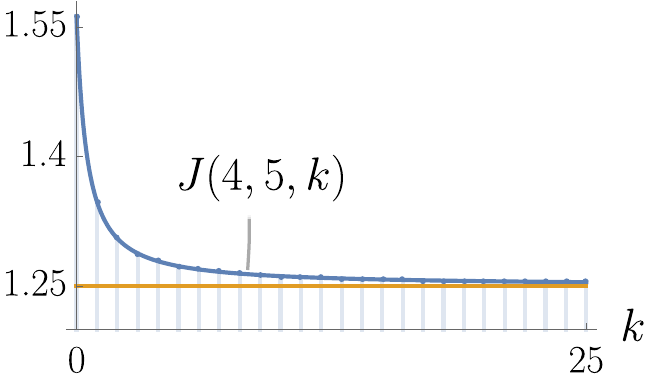}

\caption{Behavior of $k \mapsto J(d,5,k)$ for certain $1 \le d \le 4$.}
\label{Im:plots-of-J(d,5,k)}
\end{figure}

While symbol function monotonicity fails to hold in general, for many choices of $d$ and $\gamma$ (including the cases that we are most interested in), it does hold:

\begin{theorem}\label{T:Lk-monotonicity-intro}
The sub-Leray operators $\bm{L}_k$ exhibit the following monotone behavior.
\begin{itemize}

\item[$(a)$] If $\gamma=2$ and $d \in (-1,1)\,\cup\,(1,3)$, the function $k \mapsto J(d,\gamma,k)$ is strictly decreasing on the non-negative integers.
Thus
\begin{equation*}
\norm{\bm{L}_k}_{L^2(M_2,\mu_d)} > \norm{\bm{L}_{k+1}}_{L^2(M_2,\mu_d)}.
\end{equation*}

\item[$(b)$] If $\gamma>2$ and $d \in (-1,1] \cup [\gamma-1,2\gamma-1)$, the function $k \mapsto J(d,\gamma,k)$ is strictly decreasing on the non-negative integers.
Thus
\[
\norm{\bm{L}_k}_{L^2(M_\gamma,\mu_{d})} > \norm{\bm{L}_{k+1}}_{L^2(M_\gamma,\mu_{d})}.
\]

\item[$(c)$] If $\gamma<2$ and $d \in (-1,\gamma-1] \cup [1,2\gamma-1)$, the function $k \mapsto J(d,\gamma,k)$ is strictly decreasing on the non-negative integers.
Thus
\[
\norm{\bm{L}_k}_{L^2(M_\gamma,\mu_{d})} > \norm{\bm{L}_{k+1}}_{L^2(M_\gamma,\mu_{d})}.
\]

\item[$(d)$] Let $\gamma \neq2$.
The preferred symbol function $k \mapsto C_\nu(\gamma,k) = J\big(\frac{\gamma+1}{3},\gamma,k \big)$ is strictly increasing on the non-negative integers.
Thus,
\begin{equation*}
\norm{\bm{L}_k}_{L^2(M_\gamma,\nu)} < \norm{\bm{L}_{k+1}}_{L^2(M_\gamma,\nu)}.
\end{equation*}

\end{itemize}
\end{theorem}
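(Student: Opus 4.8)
\textbf{Proof strategy for Theorem~\ref{T:Lk-monotonicity-intro}.}
The plan is to study, for each admissible $(d,\gamma)$, the ratio $R_k:=J(d,\gamma,k+1)/J(d,\gamma,k)$: by positivity of $J$ (Proposition~\ref{P:norm-Lk}), monotone decrease (resp.\ increase) of $k\mapsto J(d,\gamma,k)$ is exactly $R_k<1$ (resp.\ $R_k>1$) for every $k$. Writing $a_k=\tfrac{2k+1+d}{\gamma}$ and $b_k=2k+2-a_k$, the functional equation of $\Gamma$ turns \eqref{E:def-symbol-function} into the closed form
\[
R_k \;=\; \frac{\gamma^{2}}{4\,(k+1)^{2}\,(\gamma-1)^{\,2-2/\gamma}}\cdot\frac{\Gamma\!\bigl(a_k+\tfrac2\gamma\bigr)}{\Gamma(a_k)}\cdot\frac{\Gamma\!\bigl(b_k+2-\tfrac2\gamma\bigr)}{\Gamma(b_k)}.
\]
By Proposition~\ref{P:HF-norm} the symbol function converges to the finite positive limit $\gamma/(2\sqrt{\gamma-1})$, so $R_k\to1$ and $\Lambda_k:=\log R_k\to0$; in every part the task is to fix the sign of $\Lambda_k$ for \emph{all} $k$, knowing only that it decays to $0$. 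Part $(a)$ is then immediate: for $\gamma=2$ the two Gamma ratios collapse to $a_k$ and $b_k$, and $a_k+b_k=2(k+1)$ together with $a_kb_k=(k+1)^{2}-\tfrac14(d-1)^{2}$ yields $R_k=1-\dfrac{(d-1)^{2}}{4(k+1)^{2}}$, which is $<1$ for every $k$ precisely when $d\neq1$.

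For parts $(b)$ and $(c)$ I would exploit convexity in the \emph{parameter} $d$. Differentiating $\log J(d,\gamma,k)$ in $d$ gives $\partial_d^{2}\log J=\gamma^{-2}\bigl(\psi'(a_k)+\psi'(b_k)\bigr)>0$, hence
\[
\partial_d^{2}\Lambda_k(d)=\gamma^{-2}\Bigl[\psi'\!\bigl(a_k+\tfrac2\gamma\bigr)-\psi'(a_k)+\psi'\!\bigl(b_k+2-\tfrac2\gamma\bigr)-\psi'(b_k)\Bigr]<0
\]
because the trigamma function $\psi'$ is strictly decreasing; thus $d\mapsto\Lambda_k(d)$ is strictly concave on $\ci_0(\gamma)$, and the ``bad set'' $\cj_k:=\{d:\Lambda_k(d)>0\}$ is an open subinterval of $\ci_0(\gamma)$. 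Parts $(b)$, $(c)$ are the assertion $\cj_k\subseteq(1,\gamma-1)$ for $\gamma>2$ (and $\cj_k\subseteq(\gamma-1,1)$ for $\gamma<2$), which, by concavity of $\Lambda_k$, follows once one shows for every $k$: (i) $\Lambda_k(1)\le0$, i.e.\ the Lebesgue symbol $k\mapsto J(1,\gamma,k)$ is non-increasing, supplied by the Bernstein--Widder/complete-monotonicity analysis of Section~\ref{S:series-analysis} (the mechanism behind Theorem~\ref{T:Leray-boundedness-from-Bernstein-Widder} and part $(b)$ of Theorem~\ref{T:Intro-norm-comps}); (ii) $\Lambda_k(\gamma-1)<0$, i.e.\ the pairing symbol $k\mapsto C_\sigma(\gamma,k)$ is strictly decreasing, which is \cite{BarEdh22}; and (iii) the maximiser of $\Lambda_k(\cdot)$ lies in $[1,\gamma-1]$, which by concavity means $\partial_d\Lambda_k(1)\ge0\ge\partial_d\Lambda_k(\gamma-1)$. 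Granting (i)--(iii): if $\cj_k\neq\emptyset$ it is an interval containing the maximiser, on which $\Lambda_k>0$; since $\Lambda_k\le0$ at $d=1$ and at $d=\gamma-1$ it reaches neither point, so $\cj_k\subseteq(1,\gamma-1)$, and therefore $\Lambda_k<0$ for all $k$ once $d$ lies outside the relevant interval --- this is precisely the strict decrease in $(b)$ and $(c)$. To prove (iii) one inserts the explicit values of $a_k,b_k$ at $d=1$ and at $d=\gamma-1$ and writes the digamma differences as $\psi(x+h)-\psi(x)=\int_0^h\psi'(x+s)\,ds$; after a rescaling of the integration variable, the two required inequalities reduce to the monotonicity statements ``$t\mapsto t\,\psi'(t)$ is decreasing'' and ``$t\mapsto(t-1)\,\psi'(t)$ is increasing on $[1,\infty)$'', which are elementary from the series or integral representation of $\psi'$; the decisive sign is a nonzero multiple of $2-\gamma$, as it must be.

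Part $(d)$ is the deepest. First, the preferred symbol is invariant under the H\"older involution $\gamma\leftrightarrow\gamma'=\gamma/(\gamma-1)$: one checks that $a_k$ and $b_k$ are interchanged by $\gamma\mapsto\gamma'$ (with $d=\tfrac{\gamma+1}{3}$ going to $\tfrac{\gamma'+1}{3}$) and hence that $C_\nu(\gamma',k)=C_\nu(\gamma,k)$, so we may assume $\gamma>2$; then $\tfrac{\gamma+1}{3}\in(1,\gamma-1)$, i.e.\ the preferred exponent sits in the ``increasing region'' isolated above, consistent with the claimed increase. To prove $\Lambda_k>0$ for every $k$ I would apply the Euler--Maclaurin summation formula to each $\log\Gamma$-difference in the closed form for $R_k$, obtaining an asymptotic expansion of $\log C_\nu(\gamma,k+1)-\log C_\nu(\gamma,k)$ in powers of $1/(k+1)$ with an explicit, sign-controlled remainder. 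Since $\Lambda_k\to0$ the leading terms of this expansion essentially cancel (a manifestation of the stabilisation in Proposition~\ref{P:HF-norm}), so one must carry it far enough that positivity of $\Lambda_k$ rests on the positivity of an explicit polynomial in $k$ and $\gamma$ --- conveniently reorganised using the H\"older symmetry --- with the finitely many small values of $k$ handled by hand. (An alternative is to prove $\log C_\nu(\gamma,\cdot)$ concave as a function of a real variable $k$, which with the finite limit of Proposition~\ref{P:HF-norm} forces $C_\nu(\gamma,\cdot)$ non-decreasing; already the leading-order form of this concavity reduces to the true inequality $(\gamma-1)(\gamma-2)^{2}\ge0$.) The main obstacle throughout $(b)$--$(d)$ is exactly this high-order cancellation: the soft asymptotics of $\Lambda_k$ vanish to an order that defeats any crude estimate, so a definite sign must be extracted either from the exact convexity identities above (parts $(b)$, $(c)$) or from sharp Euler--Maclaurin remainder control together with explicit polynomial positivity (part $(d)$), the latter being the single most delicate point.
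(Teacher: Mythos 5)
Your part $(a)$ is correct and is actually \emph{simpler} than the paper's own argument: the paper proves the $\gamma=2$ case (Theorem~\ref{T:gamma=2-decreasing-symbol}) by two applications of the mean value theorem for the digamma function, whereas you note that for $\gamma=2$ the ratio collapses to the exact closed form $R_k=1-\tfrac{(d-1)^2}{4(k+1)^2}$, from which strict decrease for $d\neq1$ is immediate. This is a genuine improvement in readability for part $(a)$, and it also cleanly explains the symmetry $d\leftrightarrow 2-d$ (Proposition~\ref{P:Lk-Heisenberg-symmetry}) and the formula of Corollary~\ref{C:Leray-norm-comp-gamma=2}.

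For parts $(b)$ and $(c)$ your architecture (strict concavity of $\Lambda_k(d)=\log R_k$ in the measure parameter $d$, then control at the two endpoints $d=1$ and $d=\gamma-1$ together with the sign of $\partial_d\Lambda_k$ there) is a legitimately different route from the paper, which instead passes once and for all to $\Phi(r,q)$ via the change of variables in Lemma~\ref{L:Ratio-into-series}, and then establishes $\Phi(r,q)<1$ for every $q\in(-\infty,0]\cup[1,\infty)$ in a single stroke by proving complete monotonicity of $F_q$ (Theorem~\ref{T:Theta-et-al-is-completely-monotone-q>=1}, via positivity of $M(t,q)$). Your scheme works in principle, and the concavity computation $\partial_d^2\Lambda_k=\gamma^{-2}\bigl[\psi'(a_k+\tfrac2\gamma)-\psi'(a_k)+\psi'(b_k+2-\tfrac2\gamma)-\psi'(b_k)\bigr]<0$ is correct. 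But it has more moving parts, and two of them are stated too breezily. First, you need \emph{strict} inequality $\Lambda_k(1)<0$ (not merely $\le0$), which is exactly the $q=1$ case of Corollary~\ref{C:Phi-estimate-q>=0}; so you are importing the Bernstein--Widder content at one endpoint rather than avoiding it. Second, the claimed monotonicity ``$t\mapsto(t-1)\psi'(t)$ is increasing on $[1,\infty)$'' is true, but it is \emph{not} an elementary consequence of the inequalities \eqref{E:basic-polygamma-ineq}: its derivative $\psi'(y)+(y-1)\psi''(y)=\sum_{j\ge1}\frac{j+1-y}{(y+j-1)^3}$ has alternating-sign terms and the bounds \eqref{E:basic-polygamma-ineq} only yield $\tilde h'(y)>-\tfrac1{2y^2}+\tfrac2{y^3}$, which fails for $y>4$. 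You would need a sharper argument (e.g.\ the next order of the asymptotic expansion, or a direct telescoping of the series) to close this step. In contrast, ``$t\mapsto t\psi'(t)$ decreasing'' does follow at once from \eqref{E:basic-polygamma-ineq}. So $(b)$,$(c)$ are a plausible alternative proof but with a non-trivial gap at the $(t-1)\psi'(t)$ step, and they do not obviously save work over the paper's unified $\Phi(r,q)$ argument.

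For part $(d)$ your outline (Euler--Maclaurin plus polynomial positivity with Descartes' rule) is in the right family, but it misses the key reduction the paper makes via Lemma~\ref{L:Ratio-into-series}: the substitution $r=x(k+q)$ with $x=2/\gamma\in(1,2)$ compresses the two parameters $k$ and $\gamma$ into a single real variable $r>q=\tfrac23$, so that the whole claim becomes the single inequality $\Phi(r,\tfrac23)>1$ for $r>\tfrac23$ (Theorem~\ref{T:EdhShe-series}), proved uniformly by applying Euler--Maclaurin to the $j$-series for $\Phi$. Your proposal instead applies Euler--Maclaurin (really Stirling) to the $k$-ratio directly and then says ``finitely many small $k$ handled by hand.'' Without the decoupling, the threshold below which your expansion is not yet sign-definite will in general depend on $\gamma$, so the exceptional set is not finite --- this is a genuine gap, and it is precisely the high-order cancellation you correctly identify as the crux. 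Your proposed alternative (concavity of $\log C_\nu(\gamma,\cdot)$ in a continuous $k$) would, if true, be slick, but it is asserted only at leading order; the second $k$-derivative of $\log J$ is a delicate combination $\tfrac4{\gamma^2}\psi'(a_k)+\tfrac{4(\gamma-1)^2}{\gamma^2}\psi'(b_k)-2\psi'(k+1)$ whose sign is not clear, so this alternative is also unproven as stated.
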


The monotonicity result in part $(a)$ is shown in Theorem \ref{T:gamma=2-decreasing-symbol}, parts $(b)$ and $(c)$ are shown in Theorem \ref{T:Leray-boundedness-from-Bernstein-Widder}, and part $(d)$ in Theorem \ref{T:preferred-symbol-mono-increase}.
The proofs here depend on careful analysis of certain combinations of polygamma functions.
This is previewed in Section \ref{SS:Intro-polygamma} below.

\begin{remark}
The norm and monotonicity results in Theorems \ref{T:Intro-norm-comps} and \ref{T:Lk-monotonicity-intro} have immediate implications for the spectra of the related self-adjoint operators $\bm{L}^*\bm{L}$, $\bm{L}\bm{L}^*$ and the anti self-adjoint $\bm{L}^*-\bm{L}$; see \cite[Section 5.3]{BarEdh22}.
\hfill $\lozenge$
\end{remark}

\subsubsection{Hölder symmetry of the symbol}

In what follows, $\gamma^* = \tfrac{\gamma}{\gamma-1}$ denotes the Hölder conjugate of $\gamma$.
In \cite{BarEdh22} the pairing symbol function is shown to be Hölder symmetric, i.e., 
$$
C_\sigma(\gamma,k) = C_\sigma(\gamma^*,k).
$$
This observation meshes well with our understanding of Cauchy-Leray theory from a projective dual point of view, as $M_{\gamma^*}$ is known to be the projective dual hypersurface of $M_\gamma$; see \cite[Section 6.1]{BarEdh22}.
But it was never clarified in the previous work whether a more general version of this correspondence holds when other measures are considered, and in particular, if the preferred symbol function $C_\nu(\gamma,k)$ is Hölder invariant.

We prove here that a form of Hölder invariance holds for every choice of $\gamma \neq 2$, $d \in \R$.
Indeed, given a pair $(\gamma,d)$, we show that there is a unique $d'$ such that the behavior of $\bm{L}_k$ on $L^2(M_\gamma,\mu_d)$ parallels its behavior in $L^2(M_{\gamma^*},\mu_{d'})$.
To understand this symmetry and the value of $d'$ more explicitly, we re-parameterize the exponent of $\mu_d$ as follows.
Given $d \in \R$ (since $\gamma \neq 2$), choose the unique $a \in \R$ satisfying
\begin{equation}\label{E:delta_a_intro}
d = a(\gamma-2)+1 := \delta_{a}(\gamma).
\end{equation}

\begin{theorem}\label{T:Intro-Hölder-invariance}
The symbol function $J(\delta_a(\gamma),\gamma,k)$ is Hölder symmetric, in that
\[
J(\delta_{a}(\gamma),\gamma,k) = J(\delta_{a}(\gamma^*),\gamma^*,k).
\]
\end{theorem}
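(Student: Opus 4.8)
The plan is to verify the identity by direct substitution into the explicit formula \eqref{E:def-symbol-function}, exploiting the fact that the reparameterization \eqref{E:delta_a_intro} was chosen precisely to symmetrize the argument of the Gamma functions. First I would compute, with $d = \delta_a(\gamma) = a(\gamma-2)+1$, the quantity $\frac{2k+1+d}{\gamma} = \frac{2k+2 + a(\gamma-2)}{\gamma} = \frac{2k+2}{\gamma} + a\bigl(1 - \tfrac{2}{\gamma}\bigr)$. Writing $\gamma^* = \frac{\gamma}{\gamma-1}$, one has $\frac{2}{\gamma^*} = \frac{2(\gamma-1)}{\gamma} = 2 - \frac{2}{\gamma}$, so $1 - \frac{2}{\gamma^*} = \frac{2}{\gamma} - 1 = -\bigl(1 - \tfrac{2}{\gamma}\bigr)$. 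Hence the substitution $\gamma \mapsto \gamma^*$ sends $\frac{2k+1+\delta_a(\gamma)}{\gamma}$ to $\frac{2k+2}{\gamma^*} - a\bigl(1 - \tfrac{2}{\gamma}\bigr)$, while $2k+2 - \frac{2k+1+\delta_a(\gamma)}{\gamma}$ becomes $2k+2 - \frac{2k+2}{\gamma^*} + a\bigl(1-\tfrac{2}{\gamma}\bigr)$. The key observation is that $\frac{2k+2}{\gamma} = (2k+2)\cdot\frac{\gamma-1}{\gamma}\cdot\frac{1}{\gamma-1}$; more usefully, $2k+2 - \frac{2k+2}{\gamma^*} = (2k+2)\bigl(1 - \tfrac{\gamma-1}{\gamma}\bigr) = \frac{2k+2}{\gamma}$, and likewise $2k+2 - \frac{2k+2}{\gamma} = \frac{2k+2}{\gamma^*}$. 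Therefore the pair of Gamma arguments $\left(\tfrac{2k+2}{\gamma} + a(1-\tfrac{2}{\gamma}),\ \tfrac{2k+2}{\gamma^*} - a(1-\tfrac{2}{\gamma})\right)$ is carried by $\gamma \mapsto \gamma^*$ to $\left(\tfrac{2k+2}{\gamma^*} - a(1-\tfrac{2}{\gamma}),\ \tfrac{2k+2}{\gamma} + a(1-\tfrac{2}{\gamma})\right)$, which is the same unordered pair. Since the numerator $\Gamma\bigl(\tfrac{2k+1+d}{\gamma}\bigr)\Gamma\bigl(2k+2-\tfrac{2k+1+d}{\gamma}\bigr)$ is symmetric under swapping these two arguments, and the denominator $\Gamma(k+1)^2$ does not involve $\gamma$ or $d$, the Gamma-factor part of $J$ is manifestly invariant.

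Next I would handle the elementary prefactor $\left(\tfrac{\gamma}{2}\right)^{2k+2}(\gamma-1)^{-(2k+2 - \frac{2k+1+d}{\gamma})}$. Under $\gamma \mapsto \gamma^*$ we have $\tfrac{\gamma^*}{2} = \tfrac{\gamma}{2(\gamma-1)}$ and $\gamma^* - 1 = \tfrac{1}{\gamma-1}$, and the exponent $2k+2 - \frac{2k+1+\delta_a(\gamma^*)}{\gamma^*}$ equals $\frac{2k+2}{\gamma} + a(1-\tfrac{2}{\gamma})$ by the computation above (it is the ``other'' argument). So the transformed prefactor is $\bigl(\tfrac{\gamma}{2(\gamma-1)}\bigr)^{2k+2}(\gamma-1)^{\frac{2k+2}{\gamma} + a(1-\frac{2}{\gamma})}$. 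Comparing with the original prefactor $\bigl(\tfrac{\gamma}{2}\bigr)^{2k+2}(\gamma-1)^{-\frac{2k+2}{\gamma^*} + a(1-\frac{2}{\gamma})} = \bigl(\tfrac{\gamma}{2}\bigr)^{2k+2}(\gamma-1)^{-(2k+2) + \frac{2k+2}{\gamma} + a(1-\frac{2}{\gamma})}$, the ratio of transformed to original is $(\gamma-1)^{-(2k+2)}\cdot(\gamma-1)^{2k+2} = 1$, using $-\frac{2k+2}{\gamma^*} = \frac{2k+2}{\gamma} - (2k+2)$. Thus the prefactors match exactly, completing the identity.

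Concretely, I would structure the write-up by first recording the two algebraic lemmas $\tfrac{2}{\gamma} + \tfrac{2}{\gamma^*} = 2$ and $1 - \tfrac{2}{\gamma^*} = -(1-\tfrac{2}{\gamma})$, then setting $\alpha := \tfrac{2k+1+\delta_a(\gamma)}{\gamma}$ and $\beta := 2k+2-\alpha$, proving $\{\alpha,\beta\}$ is fixed as an unordered pair under $\gamma \mapsto \gamma^*$ (with the roles of $\alpha$ and $\beta$ interchanged, and $a$ held fixed), and finally checking the $(\gamma-1)$-power bookkeeping as above. I do not anticipate a genuine obstacle here — the content of the theorem is really the assertion that the reparameterization $\delta_a$ is the ``right'' one, and once that is unwound the proof is a short computation. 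The only place requiring a little care is the exponent arithmetic in the elementary prefactor, where one must consistently use the identity $\tfrac{1}{\gamma} + \tfrac{1}{\gamma^*} = 1$ (equivalently $-\tfrac{2k+2}{\gamma^*} = \tfrac{2k+2}{\gamma} - (2k+2)$) to see the two $(\gamma-1)$-powers reconcile; getting a sign wrong there is the most likely pitfall, so I would double-check it by testing the already-known case $a = 1$, i.e. $d = \gamma-1$, $\delta_1(\gamma) = \gamma-1$, which must reduce to the established identity $C_\sigma(\gamma,k) = C_\sigma(\gamma^*,k)$ from \cite{BarEdh22}.
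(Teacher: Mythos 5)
Your proof is correct and follows the same route as the paper: you verify that the two Gamma arguments $\alpha = \tfrac{2k+1+\delta_a(\gamma)}{\gamma}$ and $\beta = 2k+2-\alpha$ are swapped under $\gamma\mapsto\gamma^*$ (the paper's equation \eqref{E:Holder-symm-1}), and then check the $(\gamma/2)^{2k+2}(\gamma-1)^{-\beta}$ prefactor is invariant using $\gamma^*-1 = (\gamma-1)^{-1}$ and $\tfrac{1}{\gamma}+\tfrac{1}{\gamma^*}=1$ (the paper's \eqref{E:Holder-symm-2}). The organization into preliminary algebraic lemmas and the $\{\alpha,\beta\}$ notation is a cosmetic difference only; the computation is identical.
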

This theorem is proved in Section \ref{SS:Hölder-symmetry}.
Taking $a=\tfrac{1}{3}$ corresponds to $d = \tfrac{\gamma+1}{3}$, which implies the Hölder symmetry of the preferred symbol function: 
$$
C_\nu(\gamma,k) = C_\nu(\gamma^*,k).
$$
Taking $a=0$ shows that $J(1,\gamma,k) = J(1,\gamma^*,k)$, verifying the analogous invariance property holds for the symbol functions associated to the Lebesgue measure $\mu_1$.
The Hölder symmetry of the symbol function plays an important role in the proofs of both Theorems \ref{T:Intro-norm-comps} and \ref{T:Lk-monotonicity-intro}, and arises from the projective dual nature of the Leray transform; see Section \ref{SS:Proj-inv-duality}.

\subsection{Polygamma inequalities and complete monotonicity}\label{SS:Intro-polygamma}

Recall the {\em digamma function} $\psi$, which is the logarithmic derivative of the familiar $\Gamma$-function:
\begin{equation}\label{E:def-of-digamma}
\psi(r) = \frac{\Gamma'(r)}{\Gamma(r)}.
\end{equation}
Further derivatives of $\psi$ are called {\em polygamma functions}.
They satisfy interesting functional identities and arise naturally in both analytic and number theoretic settings; see \cite{AbrSteBook}.
The polygamma functions admit several practical representations; we make use of both the series form given in \eqref{E:polygamma-as-series} and the integral form given in \eqref{E:Polygamma-integral-formula}.

The following combination of polygamma functions plays a central role in this paper.
\[
\Phi(r,q) := r^2 \psi''(r+1-q) + 2r\psi'(r+1-q).
\]
Figure \ref{Im:plots-of-Phi(r,q)} shows the function $r \mapsto \Phi(r,q)$ plotted for certain fixed $q$.
In Lemma \ref{L:Ratio-into-series}, we show the behavior of the Leray symbol function is closely tied to the behavior of $\Phi(r,q)$: 
\begin{itemize}
\item if $\Phi(r,q) < 1$ for $r>q$, then $k \mapsto J(\delta_{1-q}(\gamma),\gamma,k)$ is strictly decreasing.
\item if $\Phi(r,q) > 1$ for $r>q$, then $k \mapsto J(\delta_{1-q}(\gamma),\gamma,k)$ is strictly increasing.
\end{itemize}
(Here $\delta_{1-q}(\gamma)$ is given by \eqref{E:delta_a_intro} with $a=1-q$.)

\begin{figure}[htp]
\centering
\includegraphics[width=.3\textwidth]{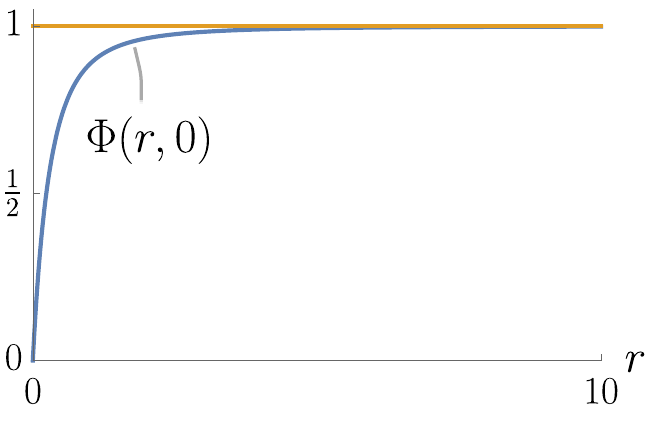}\quad
\includegraphics[width=.3\textwidth]{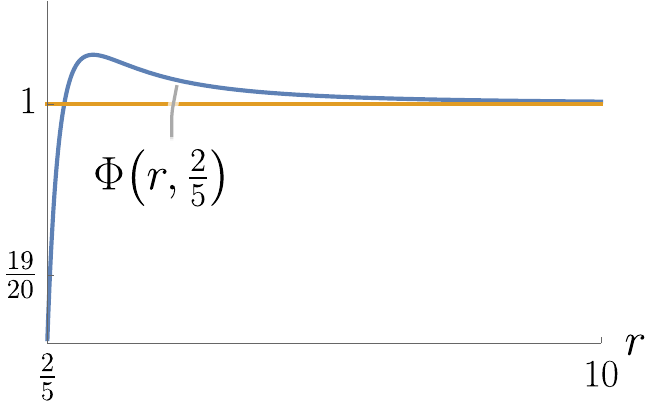}\quad
\includegraphics[width=.3\textwidth]{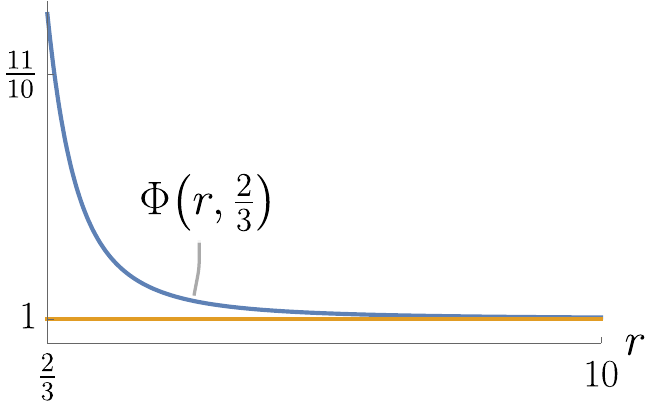}

\medskip

\includegraphics[width=.3\textwidth]{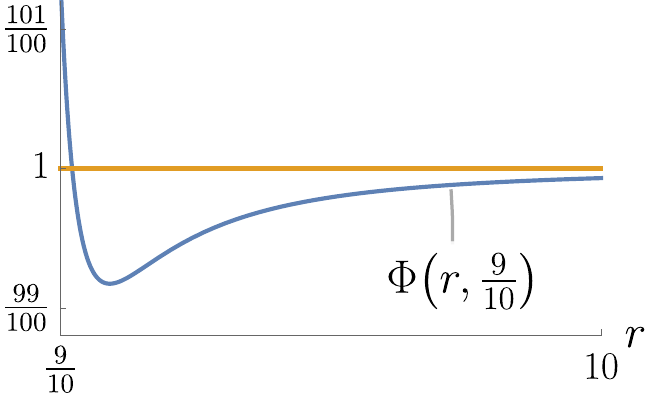}\quad
\includegraphics[width=.3\textwidth]{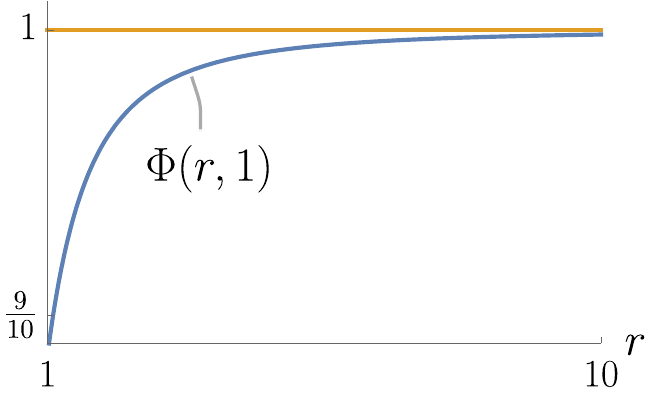}

\caption{Behavior of $r \mapsto \Phi(r,q)$ for certain $0 \le q \le 1$}
\label{Im:plots-of-Phi(r,q)}
\end{figure}

\begin{theorem}\label{T:Intro-polygamma}
Sharp polygamma inequalities related to the symbol function are obtained.
\begin{itemize}
\item[$(a)$] If $q \in (-\infty,0]\cup[1,\infty)$, the following holds for $r>q$
\[
\Phi(r,q) = r^2 \psi''(r+1-q) + 2r\psi'(r+1-q) < 1.
\]

\item[$(b)$] When $q = \frac{2}{3}$, the following holds for $r > \tfrac{2}{3}$
\[
\Phi\big(r,\tfrac{2}{3} \big) = r^2 \psi''(r+\tfrac{1}{3}) + 2r\psi'(r+\tfrac{1}{3}) > 1.
\]
\end{itemize}
\end{theorem}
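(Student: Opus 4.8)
The plan is to reduce both inequalities to statements about completely monotone functions, exploiting the integral representation $\psi^{(n)}(r) = (-1)^{n+1}\int_0^\infty \frac{t^n e^{-rt}}{1-e^{-t}}\,dt$ alluded to in the excerpt. Substituting this into $\Phi(r,q)$ with $x := r+1-q$, we get
\[
\Phi(r,q) = \int_0^\infty \left( -r^2 t^2 + 2rt \right) \frac{e^{-xt}}{1-e^{-t}}\,dt = \int_0^\infty \frac{r t\,(2 - rt)\,e^{-xt}}{1-e^{-t}}\,dt.
\]
Since $\int_0^\infty \frac{t e^{-xt}}{1-e^{-t}}\,dt = \psi'(x)$ and also $1 = \int_0^\infty e^{-xt}\,(\text{suitable kernel})$ can be arranged, the natural move is to write $1 - \Phi(r,q)$ as a single integral $\int_0^\infty K(t)\,e^{-xt}\,dt$ and try to show the kernel $K$ has a sign, or more realistically that the integral has a sign even though $K$ changes sign. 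First I would fix the free parameter: writing $r = x - 1 + q = x - a$ with $a = 1-q$, so that part $(a)$ is the regime $a \le 0$ or $a \ge 1$ (i.e.\ $q\ge 1$ or $q \le 0$), and part $(b)$ is $a = \tfrac13$ (i.e.\ $q = \tfrac23$), with the constraint $r > q$ becoming $x > 1$.

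The key technical step is to find a clean expression for $1$ as an integral against $e^{-xt}$ on the relevant range. Using $\psi'(x) - \psi'(x+1) = x^{-2}$ one can telescope; alternatively $\int_0^\infty t e^{-xt}\,dt = x^{-2}$, so $r^2 x^{-2} = r^2 \int_0^\infty t e^{-xt}\,dt$. This suggests comparing $\Phi(r,q)$ not with $1$ directly but first establishing the auxiliary identity that the quantity $1 - \Phi(r,q)$ equals a multiple of $\int_0^\infty t\,e^{-xt}\,g(t)\,dt$ for an explicit elementary $g$ depending on $r$ (hence on $x$ and $a$). Concretely, I expect
\[
1 - \Phi(r,q) = \int_0^\infty \frac{t\,e^{-xt}}{(1-e^{-t})}\,\Big( r^2 t - 2r + r^2(1-e^{-t})\cdot(\text{something}) \Big)\,dt,
\]
and after simplification the sign question becomes whether a fixed elementary function of $t$ (with $r$, $a$ as parameters) is positive for $t>0$. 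Because $r = x-a$ and $x>1$, I would then split into the monotonicity in $x$: show that $\partial_x\big(1-\Phi\big)$ has a definite sign via the same integral machinery (the derivative just multiplies the kernel by $-t<0$), reducing part $(a)$ to checking the single limiting value $x \to 1^+$ (equivalently $r \to q^+$), where $\psi'$ and $\psi''$ have known values $\psi'(1) = \pi^2/6$, etc., or to $x\to\infty$ where $r^2\psi''(x) \to -1$ by the asymptotic $\psi''(x) \sim -1/x^2$ and $r^2 \psi'(x)$-type terms and one reads off the limit is exactly $1$, pinning down which endpoint is extremal.

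The main obstacle I anticipate is part $(b)$: here the inequality goes the other way ($\Phi > 1$) and only for the single value $q = \tfrac23$, so there is no monotonicity-in-$q$ trick available and the integral kernel will genuinely change sign, meaning a crude positivity-of-integrand argument fails. I expect the resolution to require the Euler–Maclaurin / polynomial-positivity approach the introduction advertises for the preferred measure: expand $\Phi(r,\tfrac23) - 1$ using the series form $\psi^{(n)}(x) = (-1)^{n+1} n! \sum_{j\ge 0}(x+j)^{-n-1}$, collect into a single sum over $j$ of a rational function of $(x+j)$, and show each term (or each consecutive pair of terms) is positive by clearing denominators and verifying positivity of an explicit polynomial in $x$ and $j$ — this is where a finite but unpleasant computation, likely best organized by grouping $j$ and $j+1$ terms and checking a quadratic or cubic has positive coefficients on $x>\tfrac23$, $j\ge 0$, will be unavoidable. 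A secondary subtlety is that part $(a)$'s two sub-cases $q\le 0$ and $q\ge 1$ may need slightly different endpoint analyses (the constraint $r>q$ and the location of poles of $\psi',\psi''$ at non-positive integers interact differently), so I would handle $q \ge 1$ and $q \le 0$ as separate lemmas, each reduced to a one-variable monotonicity statement plus one boundary evaluation.
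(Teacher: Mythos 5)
Your overall strategy for part $(a)$ (reduce to a Laplace-transform/complete-monotonicity statement) is the right one, but as written it has a genuine gap. The expression $\Phi(r,q)=\int_0^\infty \frac{rt(2-rt)}{1-e^{-t}}e^{-xt}\,dt$ is \emph{not} the Laplace transform of a fixed measure, because $r=x-(1-q)$ appears polynomially inside the kernel; in particular your claim that differentiating in $x$ ``just multiplies the kernel by $-t$'' is false, so Bernstein--Widder cannot be invoked and the monotonicity-in-$x$ reduction does not go through as stated. The missing device is to absorb the polynomial prefactor by integrating by parts twice in $t$: applied to $\Theta(x+q,q)=(x+q)^2\psi'(x+1)$ this produces an explicit affine part $x+2q-\tfrac12$ plus a genuine Laplace transform $\int_0^\infty M(t,q)(e^t-1)^{-3}e^{-xt}\,dt$, where $M(t,q)=g_0(t)-g_1(t)q+g_2(t)q^2$ is an explicit quadratic in $q$. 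The inequality $\Phi<1$ is then exactly the statement that the first derivative of the completely monotone function $F_q(x)=\Theta(x+q,q)-x-2q+\tfrac12$ is negative, and the hypothesis $q\le 0$ or $q\ge 1$ enters because the two $q$-roots of $M(t,\cdot)$ lie in $(0,1)$ for every $t>0$ (with the root $s_1(t)\to 1$ giving sharpness at $q=1$). Your proposed endpoint evaluations at $x\to 1^+$ or $x\to\infty$ are not needed and would not by themselves close the argument, since $\Phi(r,q)\to 1$ from \emph{below} here and one must control the sign uniformly, not just in the limit.

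For part $(b)$ you correctly diagnose that the sign-definite-kernel approach fails and that an Euler--Maclaurin/polynomial-positivity argument is required, but the specific mechanism you propose --- folding the constant $1$ into the series and verifying termwise or pairwise positivity --- would fail. After the first-order Euler--Maclaurin step one finds
\begin{equation*}
\Phi\big(r,\tfrac23\big)=1+\frac{6r-1}{(3r+1)^3}+\sum_{N=1}^\infty S(r,N),
\end{equation*}
and the terms $S(r,N)$ are genuinely of both signs: $S(r,\cdot)$ is negative for $N$ below its unique critical point $Q_r\approx\tfrac{3r}{2}+\tfrac16$ and positive beyond it, so no local grouping of consecutive terms is sign-definite. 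The actual proof compares the tail of the sum with $\int_2^\infty S(r,x)\,dx$ up to a single sample value $S(r,Q_r)$, requires the sharp bound $S(r,Q_r)<\tfrac{16}{3125r^3}$ (obtained from the Taylor expansion of $Q_r$ at infinity and itself a substantial computation), and then establishes positivity of the resulting explicit function $H(r)$ by applying Descartes' rule of signs to the degree-$16$ numerator of $H''$. None of this structure is present in your sketch, so part $(b)$ remains essentially unproved in your proposal.
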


The proof of Theorem \ref{T:Intro-polygamma} part $(a)$ is found in Corollary \ref{C:Phi-estimate-q>=0}, where the inequality follows by proving the {\em complete monotonicity} of a closely related function in Theorem~\ref{T:Theta-et-al-is-completely-monotone-q>=1}.
The $q=\tfrac{2}{3}$ result in part $(b)$ corresponds directly to the preferred symbol function.
We prove this inequality in Theorem \ref{T:EdhShe-series} using a series representation for $\Phi(r,\tfrac{2}{3})$. 

There is a large body of literature on inequalities involving combinations of polygamma functions; see, e.g., \cite{Alzer1997,Alzer1998,GuoGuoQi_2010,GuoQi_2012_AAM,GuoQi_2013_PAMS,GuoQiZhao_2012,GuoQiSri_2012,Merkle_2005} and the references therein. 
The inequalities in Theorem \ref{T:Intro-polygamma} appear to be new and we suspect that mathematicians working in special function theory will take interest in results found in Sections \ref{S:series-analysis}, \ref{S:EM-pref-symbol-function}, and the Appendix.

The paper is organized as follows. 
Section \ref{S:Leray-transform} provides background material on the Leray transform.
In Section \ref{S:Symmetries-of-symbol} we establish new symmetries for the symbol function, which is then related to $\Phi(r,q)$.
In Section \ref{S:series-analysis} we introduce the Bernstein-Widder theorem and prove several polygamma and symbol function inequalities.
In Section \ref{S:EM-pref-symbol-function} the Euler-Maclaurin formula is used to study the preferred symbol function and the corresponding $\Phi(r,\tfrac{2}{3})$.
The proof of one especially difficult estimate used in Section \ref{S:EM-pref-symbol-function} is postponed until the Appendix.

\subsection{Acknowledgements}
We thank Dave Barrett and Bernhard Lamel for their comments.
We also thank the anonymous referee for positive feedback and several helpful suggestions to improve the paper.

\section{The Leray transform}\label{S:Leray-transform}

The hypersurfaces $M_\gamma$ serve as the boundary of (unbounded) convex domains in $\C^2$.
But the Leray transform $\bm{L}$ and Leray kernel
\begin{equation*}
\sL(z,\zeta) = \frac{1}{(2\pi i)^n} \frac{\dee\rho(\zeta)\w (\dbar\dee\rho(\zeta))^{n-1}}{\langle \dee\rho(\zeta),\zeta-z \rangle^n}
\end{equation*}
can in fact be defined on the more general class of $\C$-convex domains.
A domain $\Omega$ is $\C$-convex if every (non-empty) intersection with a complex line is both connected and simply connected.
This condition ensures the non-vanishing of the denominator of the Leray kernel for any $z \in \Omega$.
Both the Leray transform and the notion of $\C$-convexity can be profitably studied in projective space $\C\mathbb{P}^n$. 
This is briefly discussed in Sections \ref{SS:Proj-inv-duality} and \ref{SS:proj-duality} below; see  \cite{AndPasSigBook04} for a detailed treatment of this topic.

It is often useful to decompose the Leray kernel $\sL$ into two pieces:
\begin{subequations}
\begin{align}
\ell(z,\zeta) &= \langle \dee\rho(\zeta),\zeta-z \rangle^{-n}, \label{E:Leray-coeff-function} \\
\lambda_\rho(\zeta) &= \frac{1}{(2\pi i)^n} \dee\rho(\zeta)\w (\dbar\dee\rho(\zeta))^{n-1}. \label{E:Leray-Lev-measure}
\end{align}
\end{subequations}
We refer to $\lambda_\rho$ as the {\em Leray-Levi measure}.
Readers are invited to draw the connection between $\lambda_\rho$ and the volume form $\theta \w d\theta^{n-1}$ associated to the pseudo-hermitian structure arising from the contact form $\theta = i \dee \rho$; see \cite{Lee1986}.
By itself, $\lambda_\rho$ clearly depends on the choice of defining function, though natural choices of $\rho$ often correspond to important measures.
See \cite{LanSte13} for a survey on $\bm{L}$ and related operators within the Cauchy-Fantappi\`e framework.

Barrett and Lanzani \cite{BarLan09} study the $L^2$-theory of $\bm{L}$ on smoothly bounded, strongly convex Reinhardt domains in $\C^2$. 
They obtain detailed spectral information on $\bm{L}^*\bm{L}$, $\bm{L}\bm{L}^*$ and $\bm{L}^*-\bm{L}$, and relate the essential norm of $\bm{L}$ to a geometric invariant of the domain.
Several articles by Lanzani and Stein have investigated other aspects the Leray transform.
In \cite{LanSte14} they prove that $\bm{L}$ preserves $L^p$ spaces ($1<p<\infty$) whenever the hypersurface $\cs$ is bounded, strongly $\C$-linearly convex and $C^{1,1}$ smooth.
In \cite{LanSte17c,LanSte19} they show that counter-examples to $L^p$-boundedness exist when the smoothness or convexity hypotheses are relaxed.

\subsection{Projective invariance}\label{SS:Proj-inv-duality}

In \cite[Section 5]{Bar16}, Barrett defines a projective geometric invariant associated to any smoothly bounded, strongly $\C$-convex hypersurface $\cs$.
He uses it \cite[Section~8]{Bar16} to construct a projectively invariant measure $\nu_\cs$:
If $\Phi$ is an automorphism of $\C\mathbb{P}^n$, the measure transforms as 
\begin{equation}\label{E:pref-measure-trans-law}
\Phi^*\big(\nu_{\Phi(\cs)}\big) = |\det \Phi'|^{\frac{2n}{n+1}}\,\nu_\cs.
\end{equation}

We refer to $\nu_\cs$ as the {\em preferred measure}.
Equation \eqref{E:pref-measure-trans-law} shows that the operator
\begin{equation}\label{E:isometry}
f \mapsto (\det \Phi')^{\frac{n}{n+1}}\cdot (f \circ \Phi)
\end{equation}
maps $L^2\big(\Phi(\cs),\nu_{\Phi(\cs)}\big)$ isometrically to $L^2(\cs,\nu_\cs)$.
If we use the Radon-Nikodym Theorem to re-express the Leray kernel \eqref{E:Leray-kernel} in terms of the preferred measure, then the Leray transform admits the transformation law
\begin{equation}\label{E:Leray-trans-law}
\bm{L}_\cs\left((\det \Phi')^{\frac{n}{n+1}} (f \circ \Phi) \right) = (\det \Phi')^{\frac{n}{n+1}} \left(\bm{L}_{\Phi(\cs)}(f) \circ \Phi \right);
\end{equation}
see \cite[Section 9]{Bar16} and \cite[Section 5]{Bol05}.
When the Leray transform is bounded on $L^2(\cs,\nu_\cs)$ (e.g., when Lanzani-Stein conditions of \cite{LanSte14} hold), it can be used to define projectively invariant Hardy spaces consisting of the boundary values of holomorphic functions:
\begin{equation}\label{E:def-invariant-Hardy}
H^2(\cs,\nu_\cs) = \bm{L}_\cs\left( L^2(\cs,\nu_\cs) \right).
\end{equation}

\subsection{Projective duality}\label{SS:proj-duality}

Given a smooth strongly $\C$-convex hypersurface $\cs \subset \C\mathbb{P}^n$, there is a unique complex tangent hyperplane at each $\zeta \in \cs$. 
This determines a unique point in a dual copy of $\C\mathbb{P}^n$; the set of all such points form the {\em dual hypersurface} $\cs^*$. 

In \cite[Section 6]{Bar16}, Barrett shows that $\cs^*$ is smoothly bounded and strongly $\C$-convex whenever $\cs$ is, and further, that $\cs$ and $\cs^*$ are diffeomorphic.
If $w:\cs \to \cs^*$ is a such a diffeomorphism, it can be used to pull back the space $H^2(\cs^*,\nu_{\cs^*})$ by setting
\begin{equation*}
H_{\sf dual}^2(\cs,\nu_{\cs}^*) = w^*\left( H^2(\cs^*,\nu_{\cs^*}) \right), \qquad \mathrm{where} \qquad \nu_\cs^* = w^*(\nu_{\cs^*}).
\end{equation*}

The {\em pairing measure} $\sigma_\cs$ is defined as the geometric mean of $\nu_\cs$ and $\nu_{\cs}^*$.

Given functions $f \in H^2(\cs,\nu_\cs)$ and $g \in H_{\sf{dual}}^2(\cs,\nu_{\cs}^*)$, define their {\em bilinear pairing} as
\begin{equation*}
\langle\!\langle f, g \rangle\!\rangle = \int_{\cs} f(\zeta)g(\zeta) \, \sigma_\cs(\zeta).
\end{equation*}
The map $\hat{\chi}_\gamma: g \mapsto \langle\!\langle \cdot, g \rangle\!\rangle$ gives a quasi-isometric identification of $H_{\sf{dual}}^2(\cs,\nu_{\cs}^*)$ with function theoretic dual space $H^2(\cs,\nu_\cs)'$.
In the $M_\gamma$ setting, we have the following result:

\begin{proposition}[Barrett-Edholm \cite{BarEdh22}]\label{P:efficiency-of-pairing}
The operator $\hat{\chi}_\gamma: H_{\sf{dual}}^2(M_\gamma,\nu) \mapsto H^2(M_\gamma,\nu)'$ is an invertible map with
\begin{equation*}
\norm{\hat{\chi}_\gamma^{-1}}_{H^2(M_\gamma,\nu)'} = \norm{\bm{L}}_{L^2(M_\gamma,\nu)}.
\end{equation*}
\end{proposition}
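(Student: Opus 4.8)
The plan is to unwind the definitions so that the statement reduces to a clean functional-analytic assertion about the Leray transform as an operator on $L^2(M_\gamma,\nu)$, and then to invoke the duality pairing structure already set up in Section~\ref{SS:proj-duality}. First I would recall that $H^2(M_\gamma,\nu) = \bm{L}\big(L^2(M_\gamma,\nu)\big)$ is the range of $\bm{L}$, so that $\bm{L}$ acts as a (bounded, idempotent-like) map onto this Hardy space; the operator norm $\norm{\bm{L}}_{L^2(M_\gamma,\nu)}$ is computed in Theorem~\ref{T:Intro-norm-comps}(d), but for the present proposition we only need to relate it to $\norm{\hat\chi_\gamma^{-1}}$. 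The key conceptual input is that, under the preferred measure, the projective duality map $w:M_\gamma \to M_{\gamma^*}$ identifies $\nu^* = w^*(\nu_{M_{\gamma^*}})$ with $\nu$ itself (this is precisely the Hölder-invariance phenomenon: for $a=\tfrac13$, $\nu$ on $M_\gamma$ corresponds to $\nu$ on $M_{\gamma^*}$, cf.\ Theorem~\ref{T:Intro-Hölder-invariance}), so the pairing measure $\sigma_\cs$, being the geometric mean of $\nu$ and $\nu^*$, equals $\nu$ in this normalization — but more importantly, $H^2_{\sf dual}(M_\gamma,\nu^*)$ can be concretely identified with $H^2(M_\gamma,\nu)$ via $w^*$, turning $\hat\chi_\gamma$ into a pairing on a single Hilbert space.

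Next I would make the duality pairing explicit. The bilinear pairing $\langle\!\langle f,g\rangle\!\rangle = \int_{M_\gamma} f g\,\nu$ is, after the identification above, essentially the $L^2(M_\gamma,\nu)$ inner product composed with a complex-conjugation (or a reflection $\zeta \mapsto \bar\zeta$-type) symmetry that intertwines $H^2$ with its conjugate. Concretely, one expects that there is an anti-linear isometry $\kappa$ of $L^2(M_\gamma,\nu)$ such that $\langle\!\langle f, g\rangle\!\rangle = \langle f, \kappa g\rangle_{L^2(M_\gamma,\nu)}$, and that $\kappa$ maps $H^2_{\sf dual}$ onto $H^2$. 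Under this, $\hat\chi_\gamma$ becomes the composition of $\kappa$ with the Riesz representation map, so $\hat\chi_\gamma$ is invertible iff the pairing is non-degenerate on $H^2 \times H^2$, and $\norm{\hat\chi_\gamma^{-1}} = \norm{(P\kappa)|_{H^2}^{-1}}$ where $P = \bm{L}$ is (after suitable normalization) the projection onto $H^2$. The point is that the obstruction to $\hat\chi_\gamma^{-1}$ being bounded is exactly the failure of $\bm{L}$ restricted appropriately to be bounded below, and a direct computation — diagonalizing via the sub-Leray decomposition $\bm{L} = \bigoplus_k \bm{L}_k$ — shows $\norm{\hat\chi_\gamma^{-1}} = \sup_k \norm{\bm{L}_k}_{L^2(M_\gamma,\nu)} = \norm{\bm{L}}_{L^2(M_\gamma,\nu)}$.

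The cleanest route is therefore: (i) use the rotational symmetry in $\zeta_1$ to reduce everything to the Fourier modes $k$, where each $\bm{L}_k$ acts on a one-dimensional-in-$k$ piece; (ii) on each mode, show that the pairing $\langle\!\langle\cdot,\cdot\rangle\!\rangle$ restricted to the $k$-th sub-Hardy space is represented, relative to an orthonormal basis, by multiplication by the single positive scalar $\norm{\bm{L}_k}_{L^2(M_\gamma,\nu)} = \sqrt{C_\nu(\gamma,k)}$, using the explicit formula \eqref{E:def-symbol-function} together with the Hölder symmetry $C_\nu(\gamma,k) = C_\nu(\gamma^*,k)$ to match the $w^*$-transferred data; (iii) conclude $\hat\chi_\gamma$ is invertible, being diagonal with entries $\sqrt{C_\nu(\gamma,k)}$ bounded away from $0$ and $\infty$ (boundedness away from $0$ is the content of Proposition~\ref{P:HF-norm}, which gives the limit $\sqrt{\gamma/(2\sqrt{\gamma-1})} > 0$, plus the fact that only finitely many modes could misbehave), with $\norm{\hat\chi_\gamma^{-1}} = \big(\inf_k \sqrt{C_\nu(\gamma,k)}\big)^{-1}$; and (iv) observe that by Theorem~\ref{T:Lk-monotonicity-intro}(d) the preferred symbol $k\mapsto C_\nu(\gamma,k)$ is strictly \emph{increasing}, so its infimum is $C_\nu(\gamma,0)$ while its supremum is the limiting value — wait, one must be careful about which of $\hat\chi_\gamma^{-1}$ versus $\bm{L}$ picks up the sup and which the inf; tracking that $\bm{L}$ itself has norm $\sup_k\sqrt{C_\nu(\gamma,k)}$ and that the inverse pairing map inverts the \emph{same} diagonal operator on the range yields the stated equality $\norm{\hat\chi_\gamma^{-1}} = \norm{\bm{L}}_{L^2(M_\gamma,\nu)}$.

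The main obstacle I anticipate is step (ii): making the identification of $H^2_{\sf dual}(M_\gamma,\nu^*)$ with $H^2(M_\gamma,\nu)$ fully precise and checking that under the pullback $w^*$ the bilinear pairing really does become diagonal with the \emph{square roots} of the symbol values — this requires carefully tracking the projective-duality diffeomorphism $w$ from \cite[Section 6]{BarEdh22}, the transformation law \eqref{E:pref-measure-trans-law}, and the exact normalization constants in \eqref{E:Leray-kernel}, and it is here that the Hölder symmetry of Theorem~\ref{T:Intro-Hölder-invariance} (for $a=\tfrac13$) does the essential work. The remaining steps are then bookkeeping with the explicit symbol function, and the positivity/boundedness of the diagonal entries follows immediately from Propositions~\ref{P:norm-Lk} and~\ref{P:HF-norm}. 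Since much of this identification is already carried out in \cite{BarEdh22}, the proof should mostly consist of citing that development and then performing the short spectral computation $\norm{\hat\chi_\gamma^{-1}} = \sup_k\sqrt{C_\nu(\gamma,k)} = \norm{\bm{L}}_{L^2(M_\gamma,\nu)}$.
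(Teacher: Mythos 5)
First, note that the paper does not prove Proposition~\ref{P:efficiency-of-pairing} at all: it is quoted verbatim from \cite{BarEdh22}, so there is no in-paper argument to compare against. Judged on its own terms, your sketch contains two genuine errors that would sink it.

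The first is the identification of measures in your opening paragraph. The bilinear pairing is taken with respect to the pairing measure $\sigma$, not $\nu$: by definition $\langle\!\langle f,g\rangle\!\rangle = \int_{M_\gamma} fg\,\sigma$ with $\sigma = \sqrt{\nu\nu^*} = \mu_{\gamma-1}$, while $\nu = \mu_{(\gamma+1)/3}$ and $\nu^* = \mu_{(5\gamma-7)/3}$ (see the remark following Corollary~\ref{C:Hölder-symm-pairing-and-pref}). These three measures coincide only when $\gamma=2$; for $\gamma\neq 2$ the duality map $w$ does \emph{not} identify $\nu^*$ with $\nu$ on $M_\gamma$. The Hölder symmetry $C_\nu(\gamma,k)=C_\nu(\gamma^*,k)$ of Theorem~\ref{T:Intro-Hölder-invariance} is a statement about symbol functions on the two \emph{different} hypersurfaces $M_\gamma$ and $M_{\gamma^*}$; it does not collapse $H^2_{\sf dual}(M_\gamma,\nu^*)$ onto $H^2(M_\gamma,\nu)$ as the same subspace of a single $L^2(M_\gamma,\nu)$, and your proposed anti-linear isometry $\kappa$ doing that job is asserted rather than constructed. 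The second error is the diagonal computation in steps (ii)--(iv), whose inconsistency you notice (``wait, one must be careful\dots'') but do not resolve. If $\hat{\chi}_\gamma$ really were diagonal with entries $\sqrt{C_\nu(\gamma,k)}$ on the Fourier modes, then
\begin{equation*}
\norm{\hat{\chi}_\gamma^{-1}} = \sup_k \frac{1}{\sqrt{C_\nu(\gamma,k)}} = \Big(\inf_k \sqrt{C_\nu(\gamma,k)}\Big)^{-1} = \big(C_\nu(\gamma,0)\big)^{-1/2},
\end{equation*}
using the monotone increase of $k\mapsto C_\nu(\gamma,k)$ from Theorem~\ref{T:Lk-monotonicity-intro}(d); this is not $\sup_k\sqrt{C_\nu(\gamma,k)} = \norm{\bm{L}}_{L^2(M_\gamma,\nu)}$ unless $C_\nu(\gamma,0)\cdot\sup_k C_\nu(\gamma,k)=1$, which there is no reason to expect. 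For the stated conclusion one needs the $k$-th block of $\hat{\chi}_\gamma$ to have norm $C_\nu(\gamma,k)^{-1/2}$ (the pairing is \emph{less} efficient exactly where $\bm{L}_k$ has larger norm), and establishing that is the entire content of the proposition; it requires the explicit relation between the $\sigma$-pairing, the $\nu$- and $\nu^*$-inner products, and the sub-Leray operators worked out in \cite{BarEdh22}, not just the bookkeeping you describe. As written, the argument either proves a different (false) formula or silently assumes what is to be shown.
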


In light of Theorem \ref{T:Intro-norm-comps} part $(d)$ we can now express this as a concrete number:

\begin{theorem}
The operator $\hat{\chi}_\gamma: H_{\sf{dual}}^2(M_\gamma,\nu) \mapsto H^2(M_\gamma,\nu)'$ is an invertible map with
\begin{equation}\label{E:norm-of-chi-inv}
\norm{\hat{\chi}_\gamma^{-1}}_{H^2(M_\gamma,\nu)'} = \sqrt{\frac{\gamma}{2\sqrt{\gamma-1}}}.
\end{equation}
\end{theorem}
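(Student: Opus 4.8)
The plan is to obtain \eqref{E:norm-of-chi-inv} by combining two results already established in the excerpt, so that no genuinely new argument is needed at this point. First I would invoke Proposition \ref{P:efficiency-of-pairing}, which simultaneously asserts that $\hat\chi_\gamma \colon H_{\sf dual}^2(M_\gamma,\nu) \to H^2(M_\gamma,\nu)'$ is invertible and identifies the norm of its inverse with the Leray operator norm on the preferred space, $\norm{\hat\chi_\gamma^{-1}}_{H^2(M_\gamma,\nu)'} = \norm{\bm L}_{L^2(M_\gamma,\nu)}$. Thus the only remaining task is to substitute the explicit value of the right-hand side.

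That value is exactly the content of Theorem \ref{T:Intro-norm-comps}$(d)$, namely $\norm{\bm L}_{L^2(M_\gamma,\nu)} = \sqrt{\gamma/(2\sqrt{\gamma-1})}$. Inserting this into the identity of Proposition \ref{P:efficiency-of-pairing} immediately yields \eqref{E:norm-of-chi-inv}. In short, the statement is a corollary whose two ingredients are proved elsewhere: the functional-analytic identification in Proposition \ref{P:efficiency-of-pairing}, and the analytic norm computation in Theorem \ref{T:Intro-norm-comps}$(d)$.

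For honesty about where the difficulty resides: there is essentially no obstacle in the combination step itself — all the real work sits inside Theorem \ref{T:Intro-norm-comps}$(d)$. That computation uses the orthogonal decomposition $\bm L = \bigoplus_{k\ge 0}\bm L_k$ to write $\norm{\bm L}_{L^2(M_\gamma,\nu)} = \sup_{k\ge 0}\sqrt{C_\nu(\gamma,k)}$, then exploits the strict monotone increase of $k \mapsto C_\nu(\gamma,k)$ from Theorem \ref{T:Lk-monotonicity-intro}$(d)$ (which in turn rests on the polygamma inequality of Theorem \ref{T:Intro-polygamma}$(b)$) together with the stabilization limit $\lim_{k\to\infty}\sqrt{C_\nu(\gamma,k)} = \sqrt{\gamma/(2\sqrt{\gamma-1})}$ of Proposition \ref{P:HF-norm}; monotone increase to this limit forces the supremum to equal the limiting value. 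Since all of that machinery is developed in Section \ref{S:EM-pref-symbol-function} and the Appendix, the proof here need only cite Proposition \ref{P:efficiency-of-pairing} and Theorem \ref{T:Intro-norm-comps}$(d)$ and record the substitution.
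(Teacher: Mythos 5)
Your proposal matches the paper exactly: the theorem is stated immediately after Proposition \ref{P:efficiency-of-pairing} with the remark that Theorem \ref{T:Intro-norm-comps}$(d)$ lets one "express this as a concrete number," and the paper gives no further argument. Your sketch of where the real work lives (the sup-over-$k$ formula, the monotone increase of the preferred symbol, and the stabilization limit of Proposition \ref{P:HF-norm}) is also precisely the paper's route through Theorem \ref{T:Leray=preferred-norm}.
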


\subsection{Leray theory on $M_\gamma$}\label{SS:Leray-transform-background}

The following material is found in \cite[Sections 3, 5]{BarEdh22}.
As in the introduction, parameterize $M_\gamma$ by $(\zeta_1,\zeta_2) = (r e^{i\theta},s+i r^\gamma)$ and consider the measures
\begin{equation*}
\mu_d = r^d\, dr \w d\theta \w ds, \qquad d \in \R.
\end{equation*}
The pairing measure $\sigma$ ($d=\gamma-1$) arises as (a constant multiple of) the Leray-Levi measure corresponding to the defining function $\rho(\zeta) = |\zeta_1|^\gamma-\im{(\zeta_2)}$.

Rotational symmetry in $\zeta_1$ yields a decomposition of $L^2(M_\gamma,\mu_d)$ into an orthogonal sum of subspaces $L^2_k(M_\gamma,\mu_d)$, each consisting of functions of the form $f_k(r,s) e^{i k \theta}$.
Likewise, the Leray transform decomposes into orthogonal {\em sub-Leray} operators $\bm{L}_k$, each mapping $L^2(M_\gamma,\mu_d)$ to $L^2_k(M_\gamma,\mu_d)$ whenever it is bounded: $\bm{L} = \bigoplus_{k=0}^\infty \bm{L}_k$.
Given $f \in L^2(M_\gamma,\mu_d)$, the following inequality holds; see \cite[Theorem 5.13]{BarEdh22}:
\begin{equation}\label{E:Lk-upperbound-norm}
\norm{\bm{L}_k f}_{L^2(M_\gamma,\mu_d)} \le \sqrt{J(d,\gamma,k)} \cdot \norm{f}_{L^2(M_\gamma,\mu_d)},
\end{equation}
where $J(d,\gamma,k)$ is the symbol function defined in \eqref{E:def-symbol-function}.
It turns out there always exist functions $f_k(r,s)e^{ik\theta} \in L^2_k(M_\gamma,\mu_d)$ achieving equality in the relation \eqref{E:Lk-upperbound-norm} above.

If $f(r,s,\theta) = \sum_k f_k(r,s) e^{ik\theta}$, orthogonality implies the following; see \cite[Corollary 5.28]{BarEdh22}:
\begin{equation*}
\norm{\bm{L}f}^2 = \sum_{k=0}^\infty \norm{\bm{L}_k f}^2 = \sum_{k=0}^\infty \norm{\bm{L}_k f_k\,e^{ik\theta}}^2 \le \sum_{k=0}^\infty J(d,\gamma,k) \norm{f_k}^2 \le M \sum_{k=0}^\infty \norm{f_k}^2 = M \norm{f}^2,
\end{equation*}
where $M$ denotes the supremum of $J(d,\gamma,k)$ as $k$ ranges over the non-negative integers.
But since the norm of each $\bm{L}_k$ is achieved by a function in $L^2_k(M_\gamma,\mu_d)$, it follows that
\begin{equation}\label{E:Leray-norm-as-sup}
\norm{\bm{L}}_{L^2(M_\gamma,\mu_d)} = \sup\left\{ \sqrt{J(d,\gamma,k)} : k = 0,1,2,\dots \right\}.
\end{equation}


\section{Symmetries and Monotonicity}\label{S:Symmetries-of-symbol}


\subsection{Hölder symmetry, $\gamma \neq 2$}\label{SS:Hölder-symmetry}

Proposition~\ref{P:norm-Lk} says $\bm{L}_k$ is bounded on $L^2(M_\gamma,\mu_d)$ if and only if $d\in \ci_k(\gamma)$.
Here, it is shown that for each $d$ there is a unique $d' \in \R$ such that the behavior of $\bm{L}_k$ in $L^2(M_\gamma,\mu_d)$ closely parallels its behavior in $L^2(M_{\gamma^*},\mu_{d'})$, where $\gamma^*$ is the Hölder conjugate of $\gamma$.
For $a \in \R$, define
\begin{equation}\label{E:def-of-delta}
\delta_a(\gamma) = a(\gamma-2)+1.
\end{equation}

\begin{theorem}\label{T:Holder-symmetry-symbol}
The symbol function $J(\delta_a(\gamma),\gamma,k)$ is Hölder symmetric, in that
\begin{equation*}
J(\delta_a(\gamma),\gamma,k) = J(\delta_a(\gamma^*),\gamma^*,k).
\end{equation*}
\end{theorem}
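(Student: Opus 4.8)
The plan is to verify the identity by a direct substitution, tracking how each factor of the symbol function $J$ transforms under the swap $\gamma \mapsto \gamma^* = \tfrac{\gamma}{\gamma-1}$ when the exponent is simultaneously re-parameterized as $d = \delta_a(\gamma) = a(\gamma-2)+1$. The crucial first observation is that the combination $\tfrac{2k+1+d}{\gamma}$ appearing throughout \eqref{E:def-symbol-function} is itself Hölder-invariant in this parameterization. Indeed, writing $d = \delta_a(\gamma)$ gives $2k+1+d = 2k+2 + a(\gamma-2) = a\gamma + (2k+2-2a)$, so
\[
\frac{2k+1+\delta_a(\gamma)}{\gamma} = a + \frac{2k+2-2a}{\gamma}.
\]
I would then record the elementary fact that $\gamma^* - 1 = \tfrac{1}{\gamma-1}$ and $\tfrac{1}{\gamma^*} = 1 - \tfrac{1}{\gamma} = \tfrac{\gamma-1}{\gamma}$, so that $\tfrac{1}{\gamma^*} = \tfrac{\gamma-1}{\gamma}$ and hence $\tfrac{2k+2-2a}{\gamma^*} = (\gamma-1)\cdot\tfrac{2k+2-2a}{\gamma}$. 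This does \emph{not} immediately say the quantity above is invariant; rather, one should check the two Gamma-arguments get interchanged. Set $u = u(\gamma) := \tfrac{2k+1+\delta_a(\gamma)}{\gamma}$, so the Gamma factors in $J$ are $\Gamma(u)\,\Gamma(2k+2-u)$, which is manifestly symmetric under $u \leftrightarrow 2k+2-u$. So it suffices to show that $u(\gamma^*) = 2k+2 - u(\gamma)$, i.e. that the re-parameterization sends $u$ to its ``reflection'' $2k+2-u$ across $k+1$.

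Granting that, the numerator $\Gamma\big(\tfrac{2k+1+d}{\gamma}\big)\Gamma\big(2k+2-\tfrac{2k+1+d}{\gamma}\big)$ and the denominator $\Gamma(k+1)^2$ are both invariant. It remains to handle the two remaining factors $\big(\tfrac{\gamma}{2}\big)^{2k+2}$ and $(\gamma-1)^{-(2k+2-u)}$. Here I would compute: under $\gamma \mapsto \gamma^*$ and $u(\gamma) \mapsto 2k+2-u(\gamma)$, the exponent on $(\gamma-1)$ becomes $-(2k+2-u(\gamma^*)) = -u(\gamma)$, so we must show
\[
\Big(\frac{\gamma}{2}\Big)^{2k+2}(\gamma-1)^{-(2k+2-u)} = \Big(\frac{\gamma^*}{2}\Big)^{2k+2}(\gamma^*-1)^{-u}.
\]
Using $\gamma^* = \tfrac{\gamma}{\gamma-1}$ and $\gamma^*-1 = \tfrac{1}{\gamma-1}$, the right side is $\big(\tfrac{\gamma}{2}\big)^{2k+2}(\gamma-1)^{-(2k+2)}(\gamma-1)^{u}$, which matches the left side exactly. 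So the whole identity reduces to the single clean claim $u(\gamma^*) = 2k+2 - u(\gamma)$, which is a one-line algebraic check from $u(\gamma) = a + \tfrac{2k+2-2a}{\gamma}$ together with $\tfrac{1}{\gamma^*} = \tfrac{\gamma-1}{\gamma}$.

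The main (mild) obstacle is purely bookkeeping: making sure the re-parameterization $d = \delta_a(\gamma)$ is applied \emph{consistently on both sides} — the left side uses $d = \delta_a(\gamma)$ with base $\gamma$, while the right side uses $d' = \delta_a(\gamma^*)$ with base $\gamma^*$, and these are genuinely different numbers unless $a = 0$ — and then confirming that this is precisely what makes $u$ go to its reflection. I would present the argument by first isolating the identity $u(\gamma^*) = 2k+2-u(\gamma)$ as a short lemma-style computation, then assembling the four factors of $J$ as above, invoking the symmetry $\Gamma(u)\Gamma(2k+2-u) = \Gamma(2k+2-u)\Gamma(u)$ to finish. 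No deep input is needed; the content is entirely in the correct choice of parameterization $\delta_a$, which the statement hands us.
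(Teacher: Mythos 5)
Your proposal is correct and follows essentially the same route as the paper: the paper's equation \eqref{E:Holder-symm-1} is exactly your claim $u(\gamma^*) = 2k+2 - u(\gamma)$, its display \eqref{E:Gamma-function-swap} is your observation that the $\Gamma$-product $\Gamma(u)\Gamma(2k+2-u)$ is therefore invariant, and its display \eqref{E:Holder-symm-2} is your computation matching the remaining power factors via $\gamma^*-1 = \tfrac{1}{\gamma-1}$ and $(\gamma^*/2)^{2k+2} = (\gamma/2)^{2k+2}(\gamma-1)^{-(2k+2)}$. The only cosmetic difference is that you isolate the reflection identity as a short lemma first, which arguably presents the bookkeeping a touch more cleanly.
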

\begin{proof}
From \eqref{E:def-symbol-function} we have
\begin{equation*}
J(\delta_a(\gamma),\gamma,k) = \frac{\Gamma\big(\frac{2k+2+(\gamma-2)a}{\gamma} \big) \Gamma\big(2k+2 - \frac{2k+2+(\gamma-2)a}{\gamma} \big)}{\Gamma(k+1)^2} \left( \frac{\gamma}{2}\right)^{2k+2} (\gamma-1)^{\big( \frac{2k+2+(\gamma-2)a}{\gamma} -2k-2 \big)}.
\end{equation*}
But notice that
\begin{align}
\frac{2k+2+(\gamma^*-2)a}{\gamma^*} 
= \frac{2k+2-2a}{\gamma^*}+a 
&= \left(1-\frac{1}{\gamma}\right)(2k+2-2a) + a \notag \\
&= 2k+2-a - \frac{2k+2-2a}{\gamma} \notag \\
&= 2k+2 - \frac{2k+2+(\gamma-2)a}{\gamma}. \label{E:Holder-symm-1}
\end{align}
Thus, we have both
\begin{subequations}\label{E:Gamma-function-swap}
\begin{align}
&\Gamma\bigg(\frac{2k+2+(\gamma^*-2)a}{\gamma^*}\bigg) = \Gamma\bigg(2k+2 - \frac{2k+2+(\gamma-2)a}{\gamma} \bigg), \\
&\Gamma\bigg(\frac{2k+2+(\gamma-2)a}{\gamma}\bigg) = \Gamma\bigg(2k+2 - \frac{2k+2+(\gamma^*-2)a}{\gamma^*} \bigg).
\end{align}
\end{subequations}
Using that
\begin{equation*}
\Big( \frac{\gamma^*}{2} \Big)^{2k+2} = \Big( \frac{\gamma}{2} \Big)^{2k+2} (\gamma-1)^{-2k-2},
\end{equation*}
we deduce from \eqref{E:Holder-symm-1} that
\begin{align*}
(\gamma^*-1)^{\big( \frac{2k+2+(\gamma^*-2)a}{\gamma^*} -2k-2 \big)} 
&= (\gamma-1)^{\big( 2k +2 - \frac{2k+2+(\gamma^*-2)a}{\gamma^*} \big)} = (\gamma-1)^{\big( \frac{2k+2+(\gamma-2)a}{\gamma} \big)}.
\end{align*}
Combining the previous two equations we have
\begin{equation}\label{E:Holder-symm-2}
\Big( \frac{\gamma^*}{2} \Big)^{2k+2} (\gamma^*-1)^{\big( \frac{2k+2+(\gamma^*-2)a}{\gamma^*} -2k-2 \big)} 
= \Big( \frac{\gamma}{2} \Big)^{2k+2} (\gamma-1)^{\big( \frac{2k+2+(\gamma-2)a}{\gamma} -2k-2 \big)}.
\end{equation}
Putting together the equations in \eqref{E:Gamma-function-swap} with \eqref{E:Holder-symm-2} gives the result.
\end{proof}

\begin{remark}
The computations in the theorem above yield a Hölder symmetric reformulation of the symbol function whenever $\gamma \neq 2$:
\begin{equation*}
J(\delta_a(\gamma),\gamma,k) = \frac{\Gamma\big(\frac{2k+2+(\gamma-2)a}{\gamma} \big) \Gamma\big(\frac{2k+2+(\gamma^*-2)a}{\gamma^*} \big)}{\Gamma(k+1)^2} \left( \frac{\gamma}{2}\right)^{\big(\frac{2k+2+(\gamma-2)a}{\gamma} \big)} \Big( \frac{\gamma^*}{2}\Big)^{\big(\frac{2k+2+(\gamma^*-2)a}{\gamma^*} \big)}.
\end{equation*}
\hfill $\lozenge$
\end{remark}

\begin{corollary}\label{C:Hölder-symm-pairing-and-pref}
The pairing symbol function and the preferred symbol function are both Hölder symmetric, i.e., $C_\sigma(\gamma,k) = C_\sigma(\gamma^*,k)$ and $C_\nu(\gamma,k) = C_\nu(\gamma^*,k)$.
\end{corollary}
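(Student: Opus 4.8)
The plan is to obtain this as an immediate consequence of Theorem~\ref{T:Holder-symmetry-symbol} by finding, for each of the two distinguished measures, the unique value of $a$ for which $d = \delta_a(\gamma)$. Recall from \eqref{E:def-of-pairing-symbol} and \eqref{E:def-of-preferred-symbol} that $C_\sigma(\gamma,k) = J(\gamma-1,\gamma,k)$ and $C_\nu(\gamma,k) = J\big(\tfrac{\gamma+1}{3},\gamma,k\big)$, while \eqref{E:def-of-delta} gives the affine relation $\delta_a(\gamma) = a(\gamma-2)+1$. Since $\gamma\neq 2$, the map $a\mapsto \delta_a(\gamma)$ is a bijection of $\R$ onto $\R$, so each prescribed value of $d$ singles out exactly one $a$.

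First I would solve $\delta_a(\gamma) = \gamma-1$, which reduces to $a(\gamma-2) = \gamma-2$ and hence $a = 1$. The key point is that the same value $a=1$ also produces the pairing exponent for the conjugate index, namely $\delta_1(\gamma^*) = (\gamma^*-2)+1 = \gamma^*-1$. Applying Theorem~\ref{T:Holder-symmetry-symbol} with $a=1$ then yields
\[
C_\sigma(\gamma,k) = J(\delta_1(\gamma),\gamma,k) = J(\delta_1(\gamma^*),\gamma^*,k) = C_\sigma(\gamma^*,k).
\]
Next I would solve $\delta_a(\gamma) = \tfrac{\gamma+1}{3}$, which reduces to $a(\gamma-2) = \tfrac{\gamma-2}{3}$ and hence $a = \tfrac13$. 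Again the same $a$ gives the preferred exponent for $\gamma^*$, since $\delta_{1/3}(\gamma^*) = \tfrac13(\gamma^*-2)+1 = \tfrac{\gamma^*+1}{3}$, so Theorem~\ref{T:Holder-symmetry-symbol} with $a=\tfrac13$ yields
\[
C_\nu(\gamma,k) = J\big(\delta_{1/3}(\gamma),\gamma,k\big) = J\big(\delta_{1/3}(\gamma^*),\gamma^*,k\big) = C_\nu(\gamma^*,k).
\]

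There is essentially no obstacle here once Theorem~\ref{T:Holder-symmetry-symbol} is in hand: the only thing requiring a line of verification is the observation that $a=1$ (respectively $a=\tfrac13$) reproduces the pairing (respectively preferred) exponent for $\gamma^*$ exactly as for $\gamma$, and this is transparent from the affine form of $\delta_a$. A reader preferring a self-contained computation could instead verify both identities directly from \eqref{E:def-symbol-function} using $(\gamma^*)^* = \gamma$, $\gamma^*-1 = (\gamma-1)^{-1}$, and $\tfrac{\gamma^*}{2} = \tfrac{\gamma}{2}(\gamma-1)^{-1}$, but routing the argument through the theorem keeps it short.
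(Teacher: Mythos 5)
Your proposal is correct and follows the same approach as the paper: identify $a=1$ (pairing) and $a=\tfrac13$ (preferred) so that $d=\delta_a(\gamma)$, then invoke Theorem~\ref{T:Holder-symmetry-symbol}. The paper's proof is more terse but the content is identical.
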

\begin{proof}
The pairing measure $\sigma=\mu_d$ with $d = \gamma - 1$. 
This $d$ value coincides with $\delta_a(\gamma)$ with $a=1$, so Theorem \ref{T:Holder-symmetry-symbol} applies.
The preferred measure $\nu=\mu_d$ with $d = \frac{\gamma+1}{3}$.
This $d$ value coincides with $\delta_a(\gamma)$ with $a=\frac{1}{3}$, so Theorem \ref{T:Holder-symmetry-symbol} again applies.
\end{proof}

\begin{remark}
In addition to $\sigma$ and $\nu$, Barrett and Edholm \cite[Section 6.2.2]{BarEdh22} also introduce the {\em dual preferred measure} $\nu^*$ on a $\C$-convex hypersurface $\cs$ as the pullback of the preferred measure on the dual hypersurface $\cs^*$. 
(Recall the discussion in Section \ref{SS:Proj-inv-duality} above, where these three measures are related by the property $\sqrt{\nu\nu^*} = \sigma$.)

When $\cs = M_\gamma$, 
\begin{equation*}
\nu^* = r^{\frac{5\gamma-7}{3}}\,dr \w d\theta \w ds,
\end{equation*}
so $\nu^* = \mu_d$ with $d=\tfrac{5\gamma-7}{3}$.
It follows from \eqref{E:increasing-intervals} that $\tfrac{5\gamma-7}{3} \in \ci_0(\gamma)$, meaning that $\bm{L}$ is bounded on $L^2(M_\gamma,\nu^*)$.
Since this $d$ value equals $\delta_a(\gamma)$ with $a = \frac{5}{3}$, Theorem \ref{T:Holder-symmetry-symbol} implies Hölder symmetry: if we set $C_{\nu^*}(\gamma,k) := J\big(\frac{5\gamma-7}{3},\gamma,k \big)$, then $C_{\nu^*}(\gamma,k) = C_{\nu^*}(\gamma^*,k)$.
\hfill $\lozenge$
\end{remark}

\begin{remark}
When $\gamma = 2$, the function $\delta_a(\gamma) = a(\gamma-2)+1 \equiv 1$ for every $a$.
Thus each pair of function spaces $L^2(M_\gamma,\mu_d)$ and $L^2(M_{\gamma^*},\mu_{d'})$ with parallel symbol function behavior reduces to a pairing of $L^2(M_2,\mu_1)$ with itself when $\gamma=2$.
We note that on this space, the Leray transform coincides with the Szeg\H{o} projection.
\hfill $\lozenge$
\end{remark}

\subsection{Heisenberg monotonicity}\label{SS:Additional-symmetries}
From \eqref{E:def-symbol-function}, observe that the symbol function greatly simplifies when $\gamma = 2$:
\begin{equation}\label{E:symbol-function_gamma=2}
J(d,2,k) = \frac{\Gamma\big(k+\frac{1+d}{2}\big) \Gamma\big(k + \frac{3-d}{2}\big)}{\Gamma(k+1)^2}.
\end{equation}
A fraction involving $\Gamma$-functions in this form is sometimes called Gurland's ratio; see \cite{Merkle_2005}.
\begin{proposition}\label{P:Lk-Heisenberg-symmetry}
For each non-negative integer $k$,
\begin{equation*}
\norm{\bm{L}_k}_{L^2(M_2,\mu_{d})} = \norm{\bm{L}_k}_{L^2(M_2,\mu_{2-d})}.
\end{equation*}
\end{proposition}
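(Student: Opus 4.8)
The plan is to reduce the claimed equality to the explicit Heisenberg symbol formula \eqref{E:symbol-function_gamma=2}, together with the boundedness criterion of Proposition \ref{P:norm-Lk}. By that proposition, for each fixed $k$ the operator $\bm{L}_k$ is bounded on $L^2(M_2,\mu_d)$ precisely when $d \in \ci_k(2)$, in which case $\norm{\bm{L}_k}_{L^2(M_2,\mu_d)} = \sqrt{J(d,2,k)}$ by \eqref{E:norm_Lk-sqrt}; otherwise the norm is $+\infty$. So the identity will follow from two observations: first, that $d \in \ci_k(2)$ if and only if $2-d \in \ci_k(2)$, so that the two sides are simultaneously finite or infinite; and second, that $J(d,2,k) = J(2-d,2,k)$.

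For the first point, I would note from \eqref{E:increasing-intervals} that $\ci_k(2) = (-2k-1,\, 2k+3)$ is the open interval centered at $d=1$ with radius $2k+2$; since $d \mapsto 2-d$ is the reflection about $1$, it maps this interval onto itself. For the second point, one substitutes $2-d$ for $d$ in \eqref{E:symbol-function_gamma=2}: the first Gamma factor $\Gamma\big(k+\tfrac{1+d}{2}\big)$ becomes $\Gamma\big(k+\tfrac{3-d}{2}\big)$, the second factor $\Gamma\big(k+\tfrac{3-d}{2}\big)$ becomes $\Gamma\big(k+\tfrac{1+d}{2}\big)$, and the denominator $\Gamma(k+1)^2$ is untouched; hence the numerator is merely reordered and the value of $J$ is unchanged.

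It is worth recording the conceptual source of this symmetry. Writing $u = \tfrac{2k+1+d}{\gamma}$, the Gamma portion of the general symbol function \eqref{E:def-symbol-function} is $\Gamma(u)\,\Gamma(2k+2-u)$, which is invariant under $u \mapsto 2k+2-u$; the remaining factor $(\gamma-1)^{-(2k+2-u)}$ is invariant under that same reflection precisely when $\gamma-1 = 1$, i.e.\ $\gamma = 2$. At $\gamma = 2$ the reflection $u \mapsto 2k+2-u$ translates back into $d \mapsto 2-d$, which is exactly the symmetry asserted in the proposition; this also explains why no analogue is claimed for $\gamma \neq 2$. (The degenerate instance $\gamma = \gamma^* = 2$ of the Hölder symmetry in Theorem \ref{T:Holder-symmetry-symbol} gives nothing here, since $\delta_a(2) \equiv 1$, so the reflection symmetry above is genuinely a separate phenomenon.)

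There is essentially no obstacle in this argument: the content is the elementary reflection symmetry of Gurland's ratio about its balanced point, and the only care needed is to interpret the equality of operator norms in $[0,\infty]$ so that the unbounded cases are handled uniformly with the bounded ones.
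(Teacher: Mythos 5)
Your proof is correct and takes essentially the same approach as the paper's two-line argument: observe that $J(d,2,k)=J(2-d,2,k)$ from \eqref{E:symbol-function_gamma=2} and then invoke \eqref{E:norm_Lk-sqrt}. Your additional remarks on the symmetry of $\ci_k(2)$ and the conceptual role of $\gamma-1=1$ are careful and accurate but not needed beyond what the paper already records.
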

\begin{proof}
From \eqref{E:symbol-function_gamma=2} it is immediate that $J(d,2,k) = J(2-d,2,k)$.
Now use \eqref{E:norm_Lk-sqrt}.
\end{proof}

\begin{remark}\label{R:Heisenberg-Szego}
When $\gamma=2$ and $d=1$, we see that $\norm{\bm{L}_k}_{L^2(M_2,\mu_{1})} = 1$ for every $k$.
This of course is unsurprising, since the Leray transform is the Szeg\H{o} projection on $L^2(M_2,\mu_1)$.
\hfill $\lozenge$
\end{remark}

\begin{theorem}\label{T:gamma=2-decreasing-symbol}
Let $d$ be any real number.
The function $k \mapsto J(d,2,k)$ is decreasing whenever it is finite.
Consequently,
\begin{equation*}
\norm{\bm{L}_k}_{L^2(M_2,\mu_d)} \ge \norm{\bm{L}_{k+1}}_{L^2(M_2,\mu_d)}.
\end{equation*}
The inequality is strict for $d \neq 1$.
\end{theorem}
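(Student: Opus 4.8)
The plan is to reduce the claim to a one-line algebraic inequality by examining the ratio of consecutive values of the symbol function. Starting from the $\gamma=2$ formula \eqref{E:symbol-function_gamma=2}, write $a = \tfrac{1+d}{2}$ and $b = \tfrac{3-d}{2}$, so that $a+b = 2$ and $J(d,2,k) = \Gamma(k+a)\Gamma(k+b)/\Gamma(k+1)^2$. Applying the functional equation $\Gamma(x+1)=x\Gamma(x)$ to each of the three Gamma factors yields
\[
\frac{J(d,2,k+1)}{J(d,2,k)} = \frac{(k+a)(k+b)}{(k+1)^2}.
\]

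The key observation is then purely algebraic: since $a+b = 2$ and $a-b = d-1$, completing the square gives $(k+a)(k+b) = (k+1)^2 - \tfrac{1}{4}(d-1)^2$, hence
\[
\frac{J(d,2,k+1)}{J(d,2,k)} = 1 - \frac{(d-1)^2}{4(k+1)^2}.
\]
This ratio is always at most $1$, with equality precisely when $d=1$; moreover it is strictly positive exactly when $2(k+1) > |d-1|$, i.e.\ when $d \in (-2k-1,\,2k+3) = \ci_k(2)$.

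To finish, I would invoke Proposition~\ref{P:norm-Lk}: for $\gamma = 2$ the operator $\bm{L}_k$ is bounded, equivalently $J(d,2,k)$ is finite, exactly when $d\in\ci_k(2)$, and in that regime both numerator Gamma factors have positive argument, so $J(d,2,k)>0$. Because the intervals $\ci_k(2)$ are nested and increasing in $k$, finiteness of $J(d,2,k)$ forces finiteness of $J(d,2,k+1)$; combined with the displayed ratio this gives $0 < J(d,2,k+1) \le J(d,2,k)$, strictly when $d\ne 1$. The norm inequality then follows from \eqref{E:norm_Lk-sqrt} by taking square roots (and when $\bm{L}_k$ is unbounded the inequality $\norm{\bm{L}_k} \ge \norm{\bm{L}_{k+1}}$ is trivial).

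I do not expect a genuine obstacle: once the ratio collapses to $1 - (d-1)^2/(4(k+1)^2)$ the monotonicity is immediate. The only delicate point is interpreting ``decreasing whenever it is finite'': outside $\ci_k(2)$ the raw expression \eqref{E:symbol-function_gamma=2} can evaluate to a negative number (a product of Gamma values of opposite sign), so one must restrict attention to $d \in \ci_k(2)$ — the only range in which $J(d,2,k)$ represents $\norm{\bm{L}_k}_{L^2(M_2,\mu_d)}^2$ — or simply discard those finitely many exceptional $k$, which are immaterial to the monotonicity assertion.
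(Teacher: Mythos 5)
Your proof is correct, and it takes a genuinely different (and more elementary) route than the paper's. The paper proves Theorem \ref{T:gamma=2-decreasing-symbol} by taking logarithms, treating $k$ as a continuous variable, differentiating to get $\psi(k+\tfrac{1+d}{2})+\psi(k+\tfrac{3-d}{2})-2\psi(k+1)$, and then applying the mean value theorem twice together with $\psi''<0$ to conclude that $\log J(d,2,k)$ decreases in continuous $k$. You instead compute the discrete ratio directly: the functional equation $\Gamma(x+1)=x\Gamma(x)$ collapses $J(d,2,k+1)/J(d,2,k)$ to $(k+a)(k+b)/(k+1)^2$ with $a+b=2$, and completing the square gives the closed form
\[
\frac{J(d,2,k+1)}{J(d,2,k)} = 1 - \frac{(d-1)^2}{4(k+1)^2},
\]
from which monotonicity, the equality case $d=1$, and even the exact boundedness interval $\ci_k(2)$ (via positivity of the ratio) all drop out at once. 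Your version buys an explicit quantitative decay rate and avoids polygamma machinery entirely; the paper's version buys monotonicity in the continuous parameter $k$ and uses the same mean-value/polygamma technique that recurs throughout the rest of the paper for $\gamma\neq 2$, where the ratio does not simplify to a rational function. Your handling of the finiteness/positivity bookkeeping (nested intervals $\ci_k(2)$, positivity of both Gamma arguments inside $\ci_k(2)$, triviality of the norm inequality when $\bm{L}_k$ is unbounded) is also correct.
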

\begin{proof}
When $d=1$, Remark \ref{R:Heisenberg-Szego} says that the norm of $\bm{L}_k$ is equal to 1 for all $k$.

When $d \neq 1$, we see from \eqref{E:increasing-intervals} that $J(d,2,k)$ is finite if and only if 
\[
d \in \ci_k(2) = (-2k-1,2k+3).
\]
Since $\ci_{k}(2) \subset \ci_{k+1}(2)$ and these intervals exhaust the real line, it is clear that $J(d,2,\gamma)$ can be infinite for {\em at most} a finite number of integers $k$.

Observe that $d \in \ci_k(2)$ if and only if $2-d \in \ci_k(2)$. 
Proposition \ref{P:Lk-Heisenberg-symmetry} says it is sufficient to consider the case when $d>1$, which we now assume for the rest of the proof.

Taking logarithms, \eqref{E:symbol-function_gamma=2} becomes
\begin{equation*}
\log{J(d,2,k)} = \log{\Gamma\big(k + \tfrac{1+d}{2}\big)} + \log{\Gamma\big(k + \tfrac{3-d}{2}\big)} -2 \log{\Gamma(k + 1)}.
\end{equation*}
Now treat $k$ as a continuous variable and differentiate:
\begin{equation}\label{E:LogJ-step1}
\frac{\dee}{\dee k}\log{J(d,2,k)} = \psi(k + \tfrac{1+d}{2}) + \psi(k + \tfrac{3-d}{2}) - 2 \psi(k + 1),
\end{equation}
where $\psi$ is the digamma function defined in \eqref{E:def-of-digamma}.
Now consider the right-hand side of \eqref{E:LogJ-step1} in two separate pieces.
By the mean value theorem,
\begin{subequations}
\begin{equation}\label{E:MVT1}
\psi\big(k+\tfrac{1+d}{2}\big)-\psi(k+1) = \psi'(\xi_1)\big(\tfrac{d-1}{2}\big),
\end{equation}
for some $\xi_1 \in \big(k+1, k+ \tfrac{1+d}{2} \big)$.
Similarly,
\begin{equation}\label{E:MVT2}
\psi\big(k+\tfrac{3-d}{2}\big)-\psi(k+1) = -\psi'(\xi_2)\big(\tfrac{d-1}{2}\big),
\end{equation}
for some $\xi_2 \in \big(k+\tfrac{3-d}{2},k+1\big)$.
\end{subequations}

Combining \eqref{E:MVT1}, \eqref{E:MVT2} and \eqref{E:LogJ-step1} we use the mean value theorem again,
\begin{align*}
\frac{\dee}{\dee k}\log{J(d,2,k)} = \big(\tfrac{d-1}{2}\big)(\psi'(\xi_1)-\psi'(\xi_2)) = \big(\tfrac{d-1}{2}\big) \psi''(\xi_3)(\xi_1-\xi_2),
\end{align*}
for some $\xi_3 \in (\xi_2,\xi_1)$.
It is well-known that $\psi''<0$ on the positive reals (see, e.g., formula \eqref{E:polygamma-as-series} below).
This means that $k \mapsto \log{J(d,2,k)}$ is strictly decreasing for $k \in [0,\infty)$, which implies that $k \mapsto J(d,2,k)$ is strictly decreasing for $k \in [0,\infty)$.
Now restrict $k$ to the non-negative integers and recall \eqref{E:norm_Lk-sqrt} to complete the proof. 
\end{proof}

By Corollary \ref{C:Leray-boundedness}, the Leray transform is bounded on $L^2(M_2,\mu_d)$ if and only if $d \in \ci_0(2) = (-1,3)$.
When this holds, the norm of the full Leray transform can be computed:
\begin{corollary}\label{C:Leray-norm-comp-gamma=2}
Let $d \in (-1,3)$. 
The norm of the Leray transform on $L^2(M_2,\mu_d)$ is
\begin{equation*}
\norm{\bm{L}}_{L^2(M_2,\mu_d)} = 
\begin{cases}
\sqrt{ \frac{\pi}{2} (1-d)\sec{\big(\frac{d\pi}{2}\big)} },  &d \in (-1,1)\cup(1,3) \\
 1,  &d = 1.
\end{cases}
\end{equation*}
This formula is continuous at $d=1$.
\end{corollary}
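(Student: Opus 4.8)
\emph{Proof proposal.} The plan is to reduce the supremum in \eqref{E:Leray-norm-as-sup} to its value at $k=0$ and then evaluate a single Gamma quotient. By \eqref{E:Leray-norm-as-sup}, we have
\[
\norm{\bm{L}}_{L^2(M_2,\mu_d)} = \sup\left\{ \sqrt{J(d,2,k)} : k = 0,1,2,\dots \right\},
\]
and Theorem \ref{T:gamma=2-decreasing-symbol} tells us that (for $d \in \ci_0(2) = (-1,3)$) the sequence $k \mapsto J(d,2,k)$ is non-increasing, strictly decreasing when $d \neq 1$. Hence the supremum is attained at $k=0$, and $\norm{\bm{L}}_{L^2(M_2,\mu_d)} = \sqrt{J(d,2,0)}$.

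Next I would compute $J(d,2,0)$ directly from the simplified formula \eqref{E:symbol-function_gamma=2}: since $\Gamma(1)=1$,
\[
J(d,2,0) = \Gamma\big(\tfrac{1+d}{2}\big)\,\Gamma\big(\tfrac{3-d}{2}\big).
\]
Using the functional equation $\Gamma\big(\tfrac{3-d}{2}\big) = \tfrac{1-d}{2}\,\Gamma\big(\tfrac{1-d}{2}\big)$ followed by Euler's reflection formula $\Gamma(x)\Gamma(1-x) = \pi/\sin(\pi x)$ with $x = \tfrac{1+d}{2}$ (so that $\sin(\pi x) = \cos(\tfrac{d\pi}{2})$), one gets
\[
J(d,2,0) = \tfrac{1-d}{2}\cdot \frac{\pi}{\cos\big(\tfrac{d\pi}{2}\big)} = \tfrac{\pi}{2}(1-d)\sec\big(\tfrac{d\pi}{2}\big),
\]
which, after taking square roots, is exactly the claimed value for $d \in (-1,1)\cup(1,3)$. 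The case $d=1$ is immediate: $J(1,2,0) = \Gamma(1)^2 = 1$, so the norm equals $1$ (consistent with Remark \ref{R:Heisenberg-Szego}, where the Leray transform is the Szeg\H{o} projection).

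Finally, for continuity at $d=1$ I would substitute $d = 1+\varepsilon$ and note $\cos\big(\tfrac{d\pi}{2}\big) = -\sin\big(\tfrac{\varepsilon\pi}{2}\big)$, so that
\[
\tfrac{\pi}{2}(1-d)\sec\big(\tfrac{d\pi}{2}\big) = \frac{\tfrac{\pi\varepsilon}{2}}{\sin\big(\tfrac{\varepsilon\pi}{2}\big)} \xrightarrow[\varepsilon \to 0]{} 1,
\]
hence the square root tends to $1$, matching the value at $d=1$. There is no serious obstacle here; the only point requiring care is the bookkeeping in the reflection-formula manipulation and making sure the monotonicity input from Theorem \ref{T:gamma=2-decreasing-symbol} is invoked on the correct (finite) range $d \in (-1,3)$ so that $J(d,2,k)$ is finite for every $k$ and the sup is genuinely a max at $k=0$.
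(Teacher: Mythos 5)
Your proposal is correct and follows essentially the same route as the paper's proof: invoke Theorem \ref{T:gamma=2-decreasing-symbol} to reduce the supremum in \eqref{E:Leray-norm-as-sup} to $\sqrt{J(d,2,0)}$, then evaluate that Gamma quotient via the functional equation and Euler's reflection formula, with the $d=1$ case and the continuity check handled as in the paper (which uses L'H\^opital where you expand $\cos(\tfrac{d\pi}{2})$ directly). No gaps.
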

\begin{proof}
The $d=1$ case can be seen directly from \eqref{E:symbol-function_gamma=2} (also see Remark \ref{R:Heisenberg-Szego}) and L'Hôpital's rule shows continuity at $d=1$.
When $d \neq 1$,  Theorem \ref{T:gamma=2-decreasing-symbol} says $k \mapsto J(d,\gamma,k)$ is strictly decreasing in $k$. 
Thus by \eqref{E:Leray-norm-as-sup},
\[
\norm{\bm{L}}_{L^2(M_2,\mu_d)} = \sqrt{J(d,2,0)}.
\]
Now compute:
\begin{equation*}
J(d,2,0) = \big(\tfrac{1-d}{2}\big) \Gamma\big(1-\tfrac{1-d}{2} \big)\Gamma\big(\tfrac{1-d}{2}\big) = \big(\tfrac{1-d}{2}\big)\pi \csc\big(\tfrac{\pi}{2}-\tfrac{d\pi}{2}\big)
= \tfrac{\pi}{2} (1-d)\sec\big(\tfrac{d\pi}{2}\big).
\end{equation*}
In this computation we have used the factorial property of the $\Gamma$-function along with Euler's reflection formula: $\Gamma(z)\Gamma(1-z) = \pi \csc(\pi z)$.
\end{proof}

\subsection{Polygamma functions and ratios of successive symbols}

In this section, once again consider $\gamma \neq 2$ and set $\delta_a(\gamma) = a(\gamma-2) + 1$.

The polygamma functions are the successive derivatives of the digamma function $\psi$, defined in \eqref{E:def-of-digamma}.
For integers $m \ge 1$, these functions admit the following series representations; see \cite[Equation 6.4.10]{AbrSteBook}:
\begin{equation}\label{E:polygamma-as-series}
\psi^{(m)}(r) = (-1)^{m+1} m! \sum_{j=1}^\infty \frac{1}{(r+j-1)^{m+1}}.
\end{equation}

The following polygamma combination is closely tied to Leray symbol function and is prominently featured throughout the paper (recall Figure \ref{Im:plots-of-Phi(r,q)} in Section \ref{S:Intro}):
\begin{subequations}
\begin{equation}\label{E:polygamma-combo}
\Phi(r,q) = 2r \psi'(r+1-q)+r^2 \psi''(r+1-q).
\end{equation}
In light of \eqref{E:polygamma-as-series}, it is often useful to write this function as a series
\begin{equation}\label{E:Important-series}
\Phi(r,q) = \sum_{j=1}^\infty \frac{2r(j-q)}{(r+j-q)^3}.
\end{equation}
\end{subequations}

The establishment of new inequalities related to the polygamma functions has been an active area of research for decades; see \cite{Alzer1997,Alzer1998,GuoGuoQi_2010,GuoQi_2012_AAM,GuoQi_2013_PAMS,GuoQiZhao_2012,GuoQiSri_2012,Merkle_2005} for results with similar flavor to those we prove below.
A typical way in which new polygamma inequalities are obtained is to show that some auxiliary function is {\em completely monotone} (see Section \ref{SS:Berstein-Widder}), a checkable condition thanks to the celebrated Bernstein-Widder theorem (see Theorem \ref{T:Bernstein-Widder}).

In Sections \ref{S:series-analysis} and \ref{S:EM-pref-symbol-function}, we prove inequalities involving $\Phi(r,q)$ and related functions, both by use of Bernstein-Widder and in situations where it fails to apply.
Our inequalities appear to be new and the methods are likely adaptable to more general settings.

For later convenience, we re-frame the finiteness of the symbol function in terms of $\delta_{1-q}$.
(Setting $a=1-q$ in \eqref{E:def-of-delta} yields a symmetric formulation of the following result):

\begin{lemma}\label{L:Control-on-q-for-boundedness}
Let $\gamma\neq 2$. The symbol function $J(\delta_{1-q}(\gamma),\gamma,k)$ is finite for all non-negative integers $k$ if and only if 
\begin{equation*}
|q| < \frac{\gamma}{|\gamma-2|}.
\end{equation*}
\end{lemma}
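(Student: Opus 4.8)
The plan is to translate the finiteness condition from Proposition~\ref{P:norm-Lk} directly, using the reparameterization $d = \delta_{1-q}(\gamma) = (1-q)(\gamma-2)+1$. From \eqref{E:increasing-intervals}, $J(d,\gamma,k)$ is finite for all non-negative integers $k$ precisely when $d$ lies in $\ci_0(\gamma) = (-1,2\gamma-1)$, since the intervals $\ci_k(\gamma)$ are nested with $\ci_0(\gamma)$ the smallest. So the task reduces to the elementary exercise of determining which $q$ satisfy $\delta_{1-q}(\gamma) \in (-1,2\gamma-1)$, i.e., $-1 < (1-q)(\gamma-2)+1 < 2\gamma-1$.

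First I would rewrite the double inequality as $-2 < (1-q)(\gamma-2) < 2\gamma-2 = 2(\gamma-1)$. Writing $1-q = 1-q$ and expanding, the left inequality becomes $(1-q)(\gamma-2) > -2$ and the right becomes $(1-q)(\gamma-2) < 2(\gamma-1)$. Since $\gamma \neq 2$, the sign of $\gamma-2$ matters, so I would split into the two cases $\gamma > 2$ and $\gamma < 2$ (equivalently, $\gamma-2 > 0$ and $\gamma - 2 < 0$), divide through by $\gamma - 2$ in each case (reversing inequalities when $\gamma < 2$), solve for $q$, and check that both cases collapse to the single symmetric statement $|q| < \frac{\gamma}{|\gamma-2|}$. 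Concretely, in the case $\gamma > 2$ one gets $1 - \frac{2(\gamma-1)}{\gamma-2} < q < 1 + \frac{2}{\gamma-2}$; simplifying the endpoints gives $-\frac{\gamma}{\gamma-2} < q < \frac{\gamma}{\gamma-2}$, and since $\gamma-2 > 0$ this is exactly $|q| < \frac{\gamma}{|\gamma-2|}$. The case $\gamma < 2$ runs identically with inequalities flipped and yields the same conclusion after noting $|\gamma-2| = 2-\gamma$.

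There is essentially no obstacle here: the only point requiring any care is the case split on the sign of $\gamma - 2$ and the bookkeeping of which endpoint of $(-1,2\gamma-1)$ produces which bound on $q$, together with the algebraic simplification $1 - \frac{2(\gamma-1)}{\gamma-2} = \frac{(\gamma-2) - 2(\gamma-1)}{\gamma-2} = \frac{-\gamma}{\gamma-2}$ and $1 + \frac{2}{\gamma-2} = \frac{\gamma}{\gamma-2}$. I would present the argument compactly, perhaps observing first that $\delta_{1-q}(\gamma) + 1 = (1-q)(\gamma-2) + 2$ and $2\gamma - 1 - \delta_{1-q}(\gamma) = 2(\gamma-1) - (1-q)(\gamma-2)$, so that membership in $\ci_0(\gamma)$ is the conjunction of $(1-q)(\gamma-2) > -2$ and $(1-q)(\gamma-2) < 2(\gamma-1)$, and then finish by the case analysis above.
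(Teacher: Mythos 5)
Your proof is correct and takes essentially the same approach as the paper: both reduce to membership of $\delta_{1-q}(\gamma)$ in $\ci_0(\gamma)=(-1,2\gamma-1)$ via Proposition~\ref{P:norm-Lk}, then split on the sign of $\gamma-2$ and simplify. The algebra and the case analysis match.
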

\begin{proof}
By Proposition \ref{P:norm-Lk}, $J(\delta_{1-q}(\gamma),\gamma,k)$ is finite for all $k$ if and only if
\[
\delta_{1-q}(\gamma) \in \bigcap_{k=0}^\infty \ci_k(\gamma) = \ci_0(\gamma) = (-1,2\gamma-1).
\]
For $\gamma>2$ this means
\begin{align*}
-1 < (1-q)(\gamma-2) + 1 < 2\gamma-1 \qquad &\Longleftrightarrow \qquad -\frac{\gamma}{\gamma-2} < q < \frac{\gamma}{\gamma-2},
\end{align*}
confirming the inequality. When $\gamma<2$,
\begin{align*}
-1 < (1-q)(\gamma-2) + 1 < 2\gamma-1 \qquad &\Longleftrightarrow \qquad -\frac{\gamma}{2-\gamma} < q < \frac{\gamma}{2-\gamma},
\end{align*}
confirming the inequality.
\end{proof}
\begin{remark}
Observe that the estimate appearing in Lemma \ref{L:Control-on-q-for-boundedness} is Hölder symmetric, i.e., invariant under the change of variable $\gamma \mapsto \gamma^*$.
\hfill $\lozenge$
\end{remark}

\begin{lemma}\label{L:Ratio-into-series}
Let $\gamma \neq 2$ and $|q| < \dfrac{\gamma}{|\gamma-2|}$, so that $J(\delta_{1-q}(\gamma),\gamma,k)$ is finite for $k \ge 0$.
\begin{enumerate}
\item If $\Phi(r,q) < 1$ for $r>q$, then $k \mapsto J(\delta_{1-q}(\gamma),\gamma,k)$ is strictly decreasing on the non-negative integers.
\item If $\Phi(r,q) > 1$ for $r>q$, then $k \mapsto J(\delta_{1-q}(\gamma),\gamma,k)$ is strictly increasing on the non-negative integers.
\end{enumerate}
\end{lemma}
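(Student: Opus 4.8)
The plan is to reduce the monotonicity of $k \mapsto J(\delta_{1-q}(\gamma),\gamma,k)$ to a statement about the sign of $\Phi(r,q) - 1$ by examining the ratio of successive symbol values. Using the Hölder-symmetric reformulation of the symbol function from the remark after Theorem~\ref{T:Holder-symmetry-symbol}, write
\[
J(\delta_{1-q}(\gamma),\gamma,k) = \frac{\Gamma(x_k)\Gamma(y_k)}{\Gamma(k+1)^2}\left(\frac{\gamma}{2}\right)^{x_k}\left(\frac{\gamma^*}{2}\right)^{y_k},
\]
where $x_k = \frac{2k+2+(\gamma-2)(1-q)}{\gamma}$ and $y_k = \frac{2k+2+(\gamma^*-2)(1-q)}{\gamma^*}$, and note $x_k + y_k = 2k+2$ by \eqref{E:Holder-symm-1}. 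First I would compute the ratio $R_k := J(\ldots,k+1)/J(\ldots,k)$; since $x_{k+1} = x_k + \frac{2}{\gamma}$, $y_{k+1} = y_k + \frac{2}{\gamma^*}$, the Gamma and power factors telescope into an explicit finite expression in $x_k, y_k, k$. Strict monotonicity (decreasing, resp. increasing) of the symbol on the non-negative integers is then equivalent to $R_k < 1$ (resp.\ $R_k > 1$) for all $k \ge 0$.

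Next I would pass to the continuous variable and logarithms: set $g(k) = \log J(\delta_{1-q}(\gamma),\gamma,k)$ as a function of real $k \ge 0$, and differentiate to get $g'(k) = \frac{2}{\gamma}\psi(x_k) + \frac{2}{\gamma^*}\psi(y_k) - 2\psi(k+1) + \frac{2}{\gamma}\log\frac{\gamma}{2} + \frac{2}{\gamma^*}\log\frac{\gamma^*}{2}$, using $\frac{dx_k}{dk} = \frac{2}{\gamma}$ and $\frac{dy_k}{dk} = \frac{2}{\gamma^*}$. The aim is to show that $g$ is strictly decreasing (case (1)) or strictly increasing (case (2)) on $[0,\infty)$, which for integer $k$ gives the claim via \eqref{E:norm_Lk-sqrt}. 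The crucial observation is that $g'(k)$ should itself be analyzed by a further differentiation or by a mean-value/integral argument that introduces the second derivatives of $\psi$ — and the natural change of variables $r = k+1$ (or a shift built from $q$) converts the resulting combination of $\psi'$ and $\psi''$ terms, evaluated at arguments differing from $r$ by multiples of $q$ and reweighted by $\frac{1}{\gamma}, \frac{1}{\gamma^*}$, precisely into the expression $\Phi(r,q)$. The Hölder symmetry guarantees the $\gamma$ and $\gamma^*$ contributions assemble symmetrically so that the $\gamma$-dependence collapses and only $\Phi(r,q)$ and the constant $1$ survive in the comparison.

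The main obstacle I anticipate is the bookkeeping that shows the two-term combination coming from $x_k$ and $y_k$ really does reduce to the single function $\Phi(r,q)$ with the threshold exactly $1$ — in particular tracking how the logarithmic terms $\frac{2}{\gamma}\log\frac{\gamma}{2} + \frac{2}{\gamma^*}\log\frac{\gamma^*}{2}$ and the $-2\psi(k+1)$ term interact with the Taylor/mean-value expansion of $\psi$ about $r$. Concretely, I expect to write $\psi(x_k)$ and $\psi(y_k)$ via an exact integral (e.g.\ $\psi(r+h) - \psi(r) = \int_0^h \psi'(r+t)\,dt$) relative to a common center $r$, with $h$ proportional to $q$ and to the deviation of $\frac{2}{\gamma}$ from $1$; the weighted sum over the $\gamma$ and $\gamma^*$ branches should produce $\int$ of $\psi'$ and $\psi''$ terms whose leading part is $\frac{1}{r} \cdot \frac{1}{2}\Phi(r,q)$ after using $2r\psi'(r) + r^2\psi''(r) = \Phi(r,0)$ as a normalization and the series \eqref{E:Important-series}. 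Once the identity ``$\mathrm{sign}(g'(k)) = \mathrm{sign}(\Phi(r,q) - 1)$'' (with $r$ running over an appropriate half-line $r > q$ as $k$ ranges over $[0,\infty)$, using $|q| < \frac{\gamma}{|\gamma-2|}$ to guarantee all arguments stay positive) is established, both parts (1) and (2) follow immediately, and restricting to integer $k$ together with \eqref{E:norm_Lk-sqrt} finishes the proof.
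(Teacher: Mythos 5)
You start the same way the paper does---writing the ratio of consecutive symbol values in the Hölder-symmetric form and isolating the connection to $\Phi$---but then you diverge. You differentiate $g(k)=\log J(\delta_{1-q}(\gamma),\gamma,k)$ in $k$ and aim to match $\mathrm{sign}(g'(k))$ against $\mathrm{sign}(\Phi(r,q)-1)$. The paper never does this: it takes the log-ratio $A(k,x)$ (with $x=2/\gamma$), observes the crucial baseline $A(k,1)=0$ (i.e.\ the ratio is identically $1$ at $\gamma=2$), and then differentiates \emph{in the auxiliary variable $x$}, reaching $\Phi$ only after $\partial^2 B/\partial x^2$ is rewritten as $D(k+1,x)-D(k,x)$ and then $\partial D/\partial k$ is computed. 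That extra degree of freedom---varying $\gamma$ while holding $k$ fixed, anchored at $\gamma=2$---is the move that makes the reduction clean, and your sketch does not use it.

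The gap in your argument is precisely where you flag it as an obstacle and then proceed as if it were resolved. Your formula
\[
g'(k) = \tfrac{2}{\gamma}\psi(x_k) + \tfrac{2}{\gamma^*}\psi(y_k) - 2\psi(k+1) + \tfrac{2}{\gamma}\log\tfrac{\gamma}{2} + \tfrac{2}{\gamma^*}\log\tfrac{\gamma^*}{2}
\]
is correct, but the three $\psi$-values sit at three genuinely different centers: $x_k = r + 1 - q$, $y_k = r^\ast + 1 - q$ and $k+1$, where $r = 2(k+q)/\gamma$ and $r^\ast = 2(k+q)/\gamma^\ast$ satisfy $r + r^\ast = 2(k+q)$, and these are all distinct unless $\gamma=2$. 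There is no single $r$ about which to Taylor-expand so that the $\gamma$-dependent log constant cancels and the $\psi', \psi''$ remainders assemble into $\Phi(r,q)-1$ with a definite sign; you assert this collapse but never exhibit it, and the paper's chain of sufficient conditions (ratio $>1 \Leftarrow A>0 \Leftarrow \partial A/\partial x>0 \Leftarrow \partial^2 B/\partial x^2>0 \Leftarrow \partial D/\partial k>0 \Leftarrow \Phi>1$) shows that $\Phi$ appears several reductions deep, not as a pointwise reformulation of $g'(k)$. Your idea is essentially the paper's $\gamma=2$ argument (Theorem~\ref{T:gamma=2-decreasing-symbol}), where the log constants vanish and two mean-value applications land cleanly on $\psi''$; for $\gamma\neq 2$ the paper uses a different technique for good reason, and your proposal leaves the central computation unproved.
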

\begin{proof}
Strict increasing (resp. decreasing) behavior of the function $k \mapsto J(\delta_{1-q}(\gamma),\gamma,k)$ would follow by showing the below ratio is strictly greater than 1 (resp. strictly less than 1) for non-negative integers $k$.
Throughout the proof, set $a=1-q$:
\begin{align}\label{E:AppendixSymbolRatio1}
\frac{J(\delta_a(\gamma),\gamma,k+1)}{J(\delta_a(\gamma),\gamma,k)} 
&= \frac{\Gammaf{\frac{2k+4+(\gamma-2)a}{\gamma}} \Gammaf{2k+4 -\frac{2k+4+(\gamma-2)a}{\gamma}} }{\Gammaf{\frac{2k+2+(\gamma-2)a}{\gamma}} \Gammaf{2k+2 -\frac{2k+2+(\gamma-2)a}{\gamma}}} \left( \frac{\gamma}{2k+2}\right)^2 (\gamma - 1)^{\frac{2}{\gamma}-2} \notag
\end{align}
 
By the Hölder symmetry of the symbol function (Theorem \ref{T:Holder-symmetry-symbol}), it is sufficient to restrict analysis to $\gamma \in (1,2)$; upon making the substitution $x = \frac{2}{\gamma}$, this is equivalent to analyzing the behavior of the following function for $x \in (1,2)$:
\begin{equation*}
\frac{\Gammaf{(k+2-a)x+a} \Gammaf{(k+2-a)(2-x)+a}}{\Gammaf{(k+1-a)x+a} \Gammaf{(k+1-a)(2-x)+a} } \cdot \frac{x^{-x} (2-x)^{x-2}}{(k+1)^2} := e^{A(k,x)}
\end{equation*}
Upon taking a logarithm, observe that $A(k,1) \equiv 0$, and further that $A(k,x)$ can be written
\begin{align*}
A(k,x) = B(k,x) + B(k,2-x),
\end{align*}
where
\begin{equation}\label{E:DefOfAppendixFcnB}
B(k,x) := \log\left[\frac{\Gammaf{(k+2-a)x+a}}{\Gammaf{(k+1-a)x+a}} \right] - x\log x - \log(k+1).
\end{equation}

For the remainder of the proof we focus on case (2), noting that the same argument with trivial modifications in appropriate places will prove case (1).

We have just seen that the statement
\begin{equation}\label{E:imporant-ratio}
\frac{J(\delta_a(\gamma),\gamma,k+1)}{J(\delta_a(\gamma),\gamma,k)} > 1
\end{equation}
for all non-negative integers $k$ and $\gamma \in (1,2)$ is equivalent to the statement that $A(k,x) > 0$ for all non-negative integers $k$ and $x \in (1,2)$.

Since $A(k,1) \equiv 0$ for all non-negative integers $k$, the desired positivity of $A(k,x)$ will follow by establishing a stronger condition, namely, that for $x \in (1,2)$
\begin{equation}\label{E:dA/dx>0}
\frac{\partial A}{\partial x}(k,x) = \frac{\partial B}{\partial x}(k,x) - \frac{\partial B}{\partial x}(k,2-x) > 0.
\end{equation}

Estimate \eqref{E:dA/dx>0} will in turn follow if it can be established that $\frac{\partial B}{\partial x}(k,x)$ is strictly increasing for $x \in (1,2)$, since $0 < 2-x <x$ for such $x$.
It would thus suffice to show
\begin{equation}\label{E:AppendixB''>0}
\frac{\partial^2 B}{\partial x^2}(k,x) >0.
\end{equation}

Recalling the representation \eqref{E:polygamma-as-series} of $\psi'$ by a series, calculation now shows
\begin{align}
\frac{\partial^2 B}{\partial x^2}(k,x) 
&= (k+2-a)^2 \psi'((k+2-a)x+a) - (k+1-a)^2 \psi'((k+1-a)x+a) - \frac{1}{x} \notag\\
&= \sum_{j=1}^\infty \frac{(k+2-a)^2}{(j+(k+2-a)x + (a-1))^2} - \sum_{j=1}^\infty \frac{(k+1-a)^2}{(j+(k+1-a)x + (a-1))^2} - \frac{1}{x} \notag\\
&= D(k+1,x) - D(k,x), \label{E:AppendixB''inTermsOfD}
\end{align}
where
\begin{equation}\label{E:DefOfAppendixFcnD}
D(k,x) := \sum_{j=1}^\infty \frac{(k+1-a)^2}{(j+(k+1-a)x + (a-1))^2} - \frac{k}{x}.
\end{equation}

Temporarily regard $k$ as a continuous variable.
By \eqref{E:AppendixB''inTermsOfD}, it would now be sufficient to show that $D(k,x)$ increases as a function of $k \in [0,\infty)$ in order to conclude that estimate \eqref{E:AppendixB''>0} holds.  
We claim that for $x \in (1,2)$,
\begin{align}
\frac{\partial D}{\partial k}(k,x) 
&= \sum_{j=1}^\infty \frac{\partial}{\partial k}\left[ \frac{(k+1-a)^2}{(j+(k+1-a)x + (a-1))^2} \right] - \frac{1}{x} \notag \\
&= \sum_{j=1}^\infty \frac{2(k+1-a)(j+a-1)}{(j+(k+1-a)x+(a-1))^3}  - \frac{1}{x} \quad >0. \label{E:dD/dk>0}
\end{align}
The inequality in \eqref{E:dD/dk>0} is equivalent to saying that, for $k \in [0,\infty)$ and $x\in(1,2)$,
\begin{equation}\label{E:new-sum-ineq}
\sum_{j=1}^\infty \frac{2x(k+1-a)(j+a-1)}{(j+(k+1-a)x+(a-1))^3}  >1.
\end{equation}
Now substituting $q=1-a$ and $r = x(k+1-a) = x(k+q)$. 
The inequality \eqref{E:new-sum-ineq} would follow by showing that, for $r>q$,
\begin{equation}\label{E:desired-ineq}
\sum_{j=1}^\infty \frac{2r(j-q)}{(j+r-q)^3} = \Phi(r,q)  >1.
\end{equation}

This is the condition listed in case (2) of the theorem.
Retracing our steps, we see that inequality \eqref{E:desired-ineq} implies inequality \eqref{E:dD/dk>0}.
Now, by \eqref{E:AppendixB''inTermsOfD}, this implies inequality \eqref{E:AppendixB''>0}.
This implies \eqref{E:dA/dx>0}, which in turn shows \eqref{E:imporant-ratio}, meaning that $k \mapsto J(\delta_{1-q}(\gamma),\gamma,k)$ is an increasing function on the non-negative integers.

To prove case (1), return to \eqref{E:imporant-ratio} and retrace the same steps, changing ``$>$" to ``$<$" and ``increases" to ``decreases" in all necessary places.
\end{proof}


\section{Polygamma inequalities and complete monotonicity}\label{S:series-analysis}

\subsection{An initial estimate}\label{SS:an-initial-estimate}

We now examine the properties of
\begin{equation*}
\Phi(r,q) = 2r\psi'(r+1-q)+r^2\psi''(r+1-q).
\end{equation*}
We start from well-known upper and lower estimates on the polygamma functions; see \cite[Theorem 3]{GuoQiSri_2012}. 
If $x>0$ and $m$ is a positive integer, then
\begin{equation}\label{E:basic-polygamma-ineq}
\frac{(m-1)!}{x^m}+\frac{m!}{2x^{m+1}} < (-1)^{m+1}\psi^{(m)}(x) < \frac{(m-1)!}{x^m}+\frac{m!}{x^{m+1}}.
\end{equation}

We use this to describe the behavior of $\Phi(r,q)$ as $r\to\infty$. (Compare with Figure \ref{Im:plots-of-Phi(r,q)}.)
\begin{proposition}
Let q be a fixed real number. Then
\begin{equation*}
\lim_{r \to \infty} \Phi(r,q) = 1.
\end{equation*}
\end{proposition}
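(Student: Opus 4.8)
The plan is to estimate $\Phi(r,q) = 2r\psi'(r+1-q)+r^2\psi''(r+1-q)$ directly using the sharp two-sided bounds \eqref{E:basic-polygamma-ineq} with $x = r+1-q$. For the $m=1$ term we have $\psi'(x) = x^{-1} + \tfrac12 x^{-2} + O(x^{-3})$ in the sense that $\psi'(x)$ lies strictly between $x^{-1}+\tfrac12 x^{-2}$ and $x^{-1}+x^{-2}$; for the $m=2$ term we have $-\psi''(x)$ lying strictly between $x^{-2}+\tfrac12 x^{-3}$ and $x^{-2}+x^{-3}$. Substituting these into $\Phi$, the leading contributions are $2r\cdot\tfrac{1}{r+1-q}$ from the first term and $-r^2\cdot\tfrac{1}{(r+1-q)^2}$ from the second, whose sum is $\dfrac{2r(r+1-q) - r^2}{(r+1-q)^2} = \dfrac{r^2 + 2r(1-q)}{(r+1-q)^2}$, which tends to $1$ as $r\to\infty$. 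All remaining terms carry an extra factor of $x^{-1}$ and hence contribute $O(1/r)$ after multiplication by $r$ or $r^2$.

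The cleanest way to write this is to fix $q$, set $x = r+1-q$, and bound $\Phi(r,q)$ above and below by explicit rational functions of $r$ obtained by plugging the upper (resp. lower) estimate of \eqref{E:basic-polygamma-ineq} into the positive term $2r\psi'$ and the lower (resp. upper) estimate into the negative term $r^2\psi''$. Each of these rational bounds is an elementary function of $r$ that visibly converges to $1$ as $r\to\infty$ (numerator and denominator are both degree-$2$ polynomials in $r$ with matching leading coefficient $1$). The squeeze theorem then gives $\lim_{r\to\infty}\Phi(r,q) = 1$. One should note the bounds in \eqref{E:basic-polygamma-ineq} require the argument $x = r+1-q > 0$, which holds for all sufficiently large $r$ regardless of the fixed value of $q$, so this is not an obstruction to the limit statement.

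Alternatively, and perhaps more transparently, one can work from the series representation \eqref{E:Important-series}, $\Phi(r,q) = \sum_{j=1}^\infty \dfrac{2r(j-q)}{(r+j-q)^3}$, and recognize this as a Riemann-type sum: substituting $t = j/r$ heuristically gives $\sum_j \dfrac{2(t - q/r)}{r^2(1 + t - q/r)^3}\cdot r$, which as $r\to\infty$ approaches $\int_0^\infty \dfrac{2t}{(1+t)^3}\,dt = 1$. Making this rigorous requires a dominated-convergence or monotonicity argument to justify the interchange of limit and sum, which is the one place where a little care is needed; I expect this to be the main (though still routine) obstacle. The first approach via \eqref{E:basic-polygamma-ineq} sidesteps this entirely and is the one I would present.
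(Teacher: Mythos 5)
Your first (and preferred) argument is exactly the paper's proof: substitute the two-sided polygamma bounds \eqref{E:basic-polygamma-ineq} at $x=r+1-q$ into the two terms of $\Phi$, pairing upper with lower estimates appropriately to sandwich $\Phi(r,q)$ between explicit rational functions of $r$ that both tend to $1$. The proposal is correct and takes essentially the same approach as the paper.
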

\begin{proof}
Directly from \eqref{E:basic-polygamma-ineq}, we have for $r > \max\{q-1,0\}$,
\begin{equation*}
\frac{2r^2-2rq+3r}{(r+1-q)^2} < 2r \psi'(r+1-q) < \frac{2r^2-2rq+4r}{(r+1-q)^2},
\end{equation*}
and 
\begin{equation*}
-\frac{(r-q+3)r^2}{(r+1-q)^3} < r^2 \psi''(r+1-q) < -\frac{(r-q+2)r^2}{(r+1-q)^3}.
\end{equation*}
Now combine these:
\begin{equation}\label{E:simple-estimate-on-Phi}
\frac{r^3+(2-3q)r^2+(3-5q+2q^2)r}{(r+1-q)^3}
< \Phi(r,q) < 
\frac{r^3+(4-3q)r^2+(4-6q+2q^2)r}{(r+1-q)^3}.
\end{equation}
Taking the limit as $r\to \infty$ gives the result.
\end{proof}

\begin{remark}
Given $q$, we must show either $\Phi(r,q)>1$ for $r>q$, or $\Phi(r,q)<1$ for $r>q$ in order to apply Lemma \ref{L:Ratio-into-series}.
This requires better estimates than those obtained in \eqref{E:simple-estimate-on-Phi}.
This is easily seen when $q=0$, in which case
\begin{equation*}
\frac{r^3+2r^2+3r}{r^3+3r^2+3r+1} < \Phi(r,0) < \frac{r^3+4r^2+4r}{r^3+3r^2+3r+1}.
\end{equation*}
This lower bound is always less than 1, while the upper bound is eventually greater than 1. 
Similarly, neither the upper nor lower bound in \eqref{E:simple-estimate-on-Phi} is strong enough to warrant application of Lemma \ref{L:Ratio-into-series} for any $q$.
Our goal is now to improve the estimates on $\Phi(r,q)$.
\hfill $\lozenge$
\end{remark}

\subsection{The Bernstein-Widder theorem}\label{SS:Berstein-Widder}

Let us recall another well-known formula for the polygamma functions $\psi^{(m)}$; see \cite[Equation 6.4.1]{AbrSteBook}:
\begin{equation}\label{E:Polygamma-integral-formula}
\psi^{(m)}(r) = (-1)^{m+1}\int_0^\infty \frac{t^m e^{-rt}}{1-e^{-t}}\,dt.
\end{equation}

Let $f$ be a real-valued function defined on $(c,\infty)$, for some $c \in \R$.
We say that $f$ is {\em strictly completely monotone} on $(c,\infty)$ if it is of class $C^\infty$ and
\begin{equation}\label{E:def-completely-monotone}
(-1)^{m} \frac{d^m}{dr^m}f(r) > 0
\end{equation}
for all non-negative integers $m$ and $r>c$.

Strictly completely monotone functions are characterized by the following theorem, which can be found in \cite[page 161]{WidderBook}.
\begin{theorem}[Bernstein-Widder]\label{T:Bernstein-Widder}
A function $f$ is strictly completely monotone on $(c,\infty)$ if and only if $f(r-c)$ is the Laplace transform of a finite positive Borel measure $\mu$ on $(c,\infty)$.
In other words,
\begin{equation*}
f(r-c) = \int_0^\infty e^{-rt}\,d\mu(t).
\end{equation*}

\end{theorem}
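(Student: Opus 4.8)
The plan is to prove the two implications separately; the forward direction (representation $\Rightarrow$ complete monotonicity) is routine, while the converse is the heart of the matter. Since $f$ is strictly completely monotone on $(c,\infty)$ exactly when $g(s):=f(s+c)$ is strictly completely monotone on $(0,\infty)$, and since the two representation formulas differ only by the substitution $r\mapsto r-c$, I would reduce at once to the case $c=0$. For the easy direction, suppose $g(r)=\int_0^\infty e^{-rt}\,d\mu(t)$ with $\mu$ a finite positive Borel measure charging $(0,\infty)$. For each fixed $r_0>0$ the functions $t\mapsto t^m e^{-rt}$ are dominated on $r\ge r_0$ by $t^m e^{-r_0 t}\in L^1(d\mu)$, so differentiation under the integral sign is legitimate and yields $(-1)^m g^{(m)}(r)=\int_0^\infty t^m e^{-rt}\,d\mu(t)$; this is positive for $m=0$ because $\mu\ne 0$ and for $m\ge 1$ because $\mu$ charges $(0,\infty)$, so $g$ is strictly completely monotone.

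For the converse, assume $g$ is strictly completely monotone on $(0,\infty)$; in particular $g$ is positive, decreasing and convex. I would build $\mu$ by discretization. Fix $h>0$ and set $b_n:=g(1+nh)$ for $n=0,1,2,\dots$. The sign pattern $(-1)^k g^{(k)}>0$, fed through the mean value theorem applied iteratively to the forward differences of $(b_n)$, shows that $(b_n)$ is a bounded completely monotone sequence in the Hausdorff sense; the Hausdorff moment theorem (a consequence of Bernstein-polynomial approximation) then provides a finite positive Borel measure $\beta_h$ on $[0,1]$ with $b_n=\int_0^1 \xi^n\,d\beta_h(\xi)$. The substitution $\xi=e^{-ht}$ pushes $\beta_h$ forward to a finite positive Borel measure $\mu_h$ on $[0,\infty)$ of total mass $g(1)$, for which $g(1+nh)=\int_0^\infty e^{-nht}\,d\mu_h(t)$ for all $n$; that is, $g$ agrees with the Laplace transform of $\mu_h$ on the arithmetic progression $\{1+nh:n\ge 0\}$. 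Letting $h\to 0$ along a sequence $h_j$, the uniform mass bound $\mu_{h_j}([0,\infty))=g(1)$ lets one pass, by Helly selection, to a vaguely convergent subsequence with limit $\mu$; one then checks, using boundedness of $g$ near $0$ to prevent mass from escaping to $t=+\infty$, that the convergence is weak, so $\int_0^\infty e^{-rt}\,d\mu_{h_j}(t)\to\int_0^\infty e^{-rt}\,d\mu(t)$ for every $r>0$. Since the progressions $\{1+nh_j:n\ge 0\}$ become dense and both sides are continuous in $r$, this forces $g(r)=\int_0^\infty e^{-rt}\,d\mu(t)$ for all $r>0$; finally, strict complete monotonicity forces $\mu((0,\infty))>0$, as in the easy direction, completing the argument.

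\textbf{Main obstacle.} The genuinely delicate step is this last limiting passage: establishing tightness of $\{\mu_{h_j}\}$ (ruling out loss of mass at infinity) and checking that the vague limit is independent of the chosen subsequence, the latter following from injectivity of the Laplace transform on finite Borel measures. One may instead route around these analytic subtleties via Choquet theory: after normalizing $g(0^+)\le 1$, the completely monotone functions form a convex set that is compact for pointwise convergence (Helly again), its extreme points are precisely the pure exponentials $r\mapsto e^{-rt}$ with $t\in[0,\infty]$, and the Choquet representation theorem produces the measure — there the work shifts entirely into identifying the extreme points. In either approach, finiteness of $\mu$ corresponds exactly to finiteness of $g(0^+)=\lim_{r\to 0^+} g(r)$, and the strictness hypothesis is precisely what pins down $\mu((0,\infty))>0$; complete details are in \cite{WidderBook}.
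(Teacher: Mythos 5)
The paper does not prove this statement. It is the classical Bernstein--Widder representation theorem, which the paper simply cites to \cite[page 161]{WidderBook} and then applies (only in the easy direction, in the proof of Theorem~\ref{T:Theta-et-al-is-completely-monotone-q>=1}). There is therefore no internal proof to compare against. What you propose is a correct outline of one of the standard arguments: reduce to $c=0$, handle the easy direction by differentiating under the integral, and for the converse discretize, invoke Hausdorff's solution of the little moment problem, push the Hausdorff measure forward through $\xi = e^{-ht}$, and pass to a Helly limit, with the Choquet route noted as an alternative. This is a legitimate route and matches what one finds in Widder and Feller.

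Two places in the converse need tightening. First, a small one: the pushforward of $\beta_h$ under $\xi \mapsto -\tfrac{1}{h}\log\xi$ lives a priori on the compactification $[0,\infty]$, since $\xi=0$ is sent to $t=+\infty$; and, relatedly, the tightness concern you flag is actually unnecessary. Since $e^{-rt} \in C_0([0,\infty))$ for each $r>0$, vague convergence $\mu_{h_j}\to\mu$ together with the uniform bound $\mu_{h_j}([0,\infty])=g(1)$ already gives $\int e^{-rt}\,d\mu_{h_j}\to\int e^{-rt}\,d\mu$ by a routine $C_c$-approximation of $e^{-rt}$; any mass escaping to $+\infty$ contributes nothing to these integrals, so nothing needs to be ruled out. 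Second, the genuine gap: the progressions $\{1+nh_j:n\ge 0\}$ are only dense in $[1,\infty)$, so your density-plus-continuity step yields $g(r)=\int e^{-rt}\,d\mu(t)$ only for $r\ge 1$, not for all $r>0$. To cover $(0,1)$ you must either rerun the construction with base point $\epsilon$ in place of $1$ and let $\epsilon\to 0$, or observe that both $g$ and $r\mapsto\int e^{-rt}\,d\mu(t)$ are real-analytic on $(0,\infty)$ (completely monotone functions extend holomorphically to the right half-plane) and hence agree everywhere once they agree on $[1,\infty)$. Finally, note that the statement as printed in the paper is slightly garbled: $f(r-c)$ should read $f(r+c)$, the measure should live on $[0,\infty)$ rather than ``$(c,\infty)$,'' and the ``only if'' direction with a \emph{finite} measure tacitly requires $f(c^+)<\infty$ (otherwise $f(r)=1/(r-c)$ is a counterexample). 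Your substitution $g(s):=f(s+c)$ and your closing observation that finiteness of $\mu$ corresponds to finiteness of $g(0^+)$ silently correct all of this; and since the paper only ever uses the easy direction, none of it affects the paper's application.
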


Our goal is to use Bernstein-Widder to analyze $\Phi(r,q)$ and closely related functions.

Let us define
\begin{subequations}
\begin{equation}\label{E:def-of-Theta}
\Theta(r,q) = r^2 \psi'(r+1-q)
\end{equation}
and observe that 
\begin{equation}\label{E:r-deriv-of-Theta-is-Phi}
\frac{\dee \Theta}{\dee r}(r,q) = \Phi(r,q).
\end{equation}
\end{subequations}

\begin{lemma}\label{L:lemma-on-form-of-Theta}
For $x>0$ we have the following
\begin{equation}\label{E:integral-expression-for-Theta}
\Theta(x+q,q) = x + 2q -\frac{1}{2} + \int_0^\infty \frac{M(t,q)}{(e^t-1)^3} e^{-xt}\, dt,
\end{equation}
where 
\begin{equation}\label{E:def-of-M(t,q)}
M(t,q) = g_0(t) - g_1(t)q + g_2(t)q^2,
\end{equation}
and $g_0,g_1,g_2$ are positive functions on $(0,\infty)$.
Explicitly,
\begin{subequations}\label{E:def-of-g_j}
\begin{align}
g_0(t) &= e^t(2-2e^t+t+t e^t),\\
g_1(t) &= 2(e^t-1)(1-e^t+t e^t),\\
g_2(t) &= t(e^t-1)^2.
\end{align}
\end{subequations}
\end{lemma}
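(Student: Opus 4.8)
The plan is to start from the integral representation \eqref{E:Polygamma-integral-formula} for $\psi'$ (the case $m=1$) and substitute $r = x+q$, so that $\psi'(r+1-q) = \psi'(x+1)$ is expressed as $\int_0^\infty \frac{t e^{-(x+1)t}}{1-e^{-t}}\,dt$. Multiplying by $r^2 = (x+q)^2 = x^2 + 2qx + q^2$ then gives $\Theta(x+q,q)$ as a sum of three integrals with integrand weights $x^2$, $x$, and $1$ against the kernel $\frac{t e^{-(x+1)t}}{1-e^{-t}}$. The point of the asserted formula is to absorb the polynomial-in-$x$ prefactors into the exponential by integration by parts in $t$: replacing $x e^{-xt}$ and $x^2 e^{-xt}$ by $-\frac{d}{dt}e^{-xt}$ and $\frac{d^2}{dt^2}e^{-xt}$ respectively, then integrating by parts (twice, for the $x^2$ term), moving the $t$-derivatives onto the smooth kernel $\frac{t e^{-t}}{1-e^{-t}} = \frac{t}{e^t-1}$. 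The boundary terms at $t=0$ and $t=\infty$ produce the polynomial-plus-constant part $x + 2q - \tfrac12$, and the remaining integral collects all three contributions over a common denominator.

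The key computational step is therefore: differentiate $h(t) := \frac{t}{e^t-1}$ once and twice in $t$, assemble the combination that arises from the $x^2$, $2qx$, $q^2$ pieces after integration by parts, and check that putting everything over the common denominator $(e^t-1)^3$ yields exactly the numerator $M(t,q) = g_0(t) - g_1(t)q + g_2(t)q^2$ with $g_0,g_1,g_2$ as displayed in \eqref{E:def-of-g_j}. Concretely, the $q^2$ coefficient comes directly from $q^2 \int_0^\infty h(t) e^{-xt}\,dt$ with no integration by parts, which after writing $h(t) = \frac{t}{e^t-1} = \frac{t(e^t-1)^2}{(e^t-1)^3}$ gives $g_2(t) = t(e^t-1)^2$; the $q$-linear coefficient comes from the single integration by parts applied to $2qx\, e^{-xt}$, producing $-\frac{d}{dt}h(t)$ and hence (after clearing denominators) $g_1(t) = 2(e^t-1)(1 - e^t + t e^t)$ up to sign; and the $q$-free part comes from the double integration by parts of the $x^2$ term, giving $\frac{d^2}{dt^2}h(t)$ and hence $g_0(t) = e^t(2 - 2e^t + t + t e^t)$. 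One then extracts the boundary contributions: near $t=0$ one Taylor-expands $h(t) = 1 - \tfrac{t}{2} + \tfrac{t^2}{12} - \cdots$ so that $h(0) = 1$, $h'(0) = -\tfrac12$, which feed the constants, while the explicit linear term $x$ and the term $2q$ come from matching the $\int_0^\infty e^{-xt}\,dt = 1/x$ and related elementary integrals against the leading behavior.

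After the identity is established, there remains the positivity claim for $g_0, g_1, g_2$ on $(0,\infty)$. For $g_2(t) = t(e^t-1)^2$ this is immediate. For $g_0$ and $g_1$ one factors out the obvious positive pieces and reduces to showing, e.g., that $2 - 2e^t + t + t e^t > 0$ and $1 - e^t + t e^t > 0$ for $t > 0$; each of these is a standard one-variable inequality proved by noting the function vanishes at $t=0$ and has positive derivative there (differentiate: $\tfrac{d}{dt}(1 - e^t + t e^t) = t e^t > 0$, and $\tfrac{d}{dt}(2 - 2e^t + t + t e^t) = t e^t - e^t + 1 + e^t = t e^t + 1 > 0$ after simplification, or iterate once more), so both are strictly increasing from $0$. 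I expect the main obstacle to be purely bookkeeping: getting the signs and the boundary terms in the integration-by-parts exactly right so that the messy combination of $h, h', h''$ collapses to the clean form $M(t,q)/(e^t-1)^3$ with precisely the stated $g_j$; the positivity of the $g_j$ afterwards is routine.
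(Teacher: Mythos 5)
Your proposal is correct and follows essentially the same route as the paper: both start from the representation \eqref{E:Polygamma-integral-formula} and integrate by parts twice in $t$ to trade the quadratic prefactor for derivatives of the kernel, read off $x+2q-\tfrac12$ from the boundary terms, and prove positivity of the $g_j$ by showing the nontrivial factors vanish at $t=0$ and have iterated derivative $te^t>0$. (Two small caveats: the derivative of $2-2e^t+t+te^t$ is $1-e^t+te^t$, not $te^t+1$ --- but that is exactly the factor of $g_1$ whose positivity you establish separately, so your ``iterate once more'' fallback closes this --- and the paper also spends a line verifying convergence of the resulting integral via the small-$t$ and large-$t$ behavior of $M(t,q)$, which should not be omitted.)
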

\begin{proof}
From definition \eqref{E:def-of-Theta} and \eqref{E:Polygamma-integral-formula} we have
\begin{align*}
\Theta(x+q,q) &= (x+q)^2 \int_0^\infty \frac{t e^{-(x+1)t}}{1-e^{-t}}\,dt = \int_0^\infty \frac{t e^{qt}}{e^{t}-1} (x+q)^2 e^{-(x+q)t}\,dt. \label{E:Theta-expn-step1}
\end{align*}
Integrate by parts twice, setting 
\[
u_1 = \frac{t e^{qt}}{e^{t}-1},\qquad dv_1 = (x+q)^2 e^{-(x+q)t}\,dt,
\]
for the first application, and $u_2 = \frac{du_1}{dt}$ and $dv_2 = v_1\,dt$ for the second.
This means that
\begin{equation*}
u_2 = \frac{e^t-1-te^t-(t-te^t)q}{(e^t-1)^2}\, e^{qt}, \qquad v_1 = -(x+q) e^{-(x+q)t},
\end{equation*}
and consequently,
\begin{equation*}
du_2 = \frac{g_0(t) - g_1(t)q + g_2(t)q^2}{(e^t-1)^3}\, e^{qt} \, dt, \qquad v_2 = e^{-(x+q)t},
\end{equation*}
where $g_0,g_1,g_2$ are given in the statement of the theorem.
Now,
\begin{align}
\Theta(x+q,q) = \int_0^\infty u_1\,dv_1 &= u_1 v_1 \Big\vert_0^\infty -  \int_0^\infty v_1\,du_1 \notag \\
&= u_1 v_1 \Big\vert_0^\infty - u_2 v_2 \Big\vert_0^\infty + \int_0^\infty v_2\,du_2 \notag \\
&= (x+q) - \big(\tfrac{1}{2}-q\big) + \int_0^\infty \frac{M(t,q)}{(e^t-1)^3} \,e^{-xt}\, dt. \label{E:Theta-expn-step2}
\end{align}

Using the definitions of $M(t,q)$ and the $g_j(t)$ given above, we see that for $t$ near 0,
\[
M(t,q) = \Big(q^2 - q+\frac{1}{6}\Big)t^3 + \Big(q^2 - \frac{7q}{6}+\frac{1}{4}\Big)t^4 + O\big(t^5\big)
\]
On the other hand, if $t$ is large enough (say, $t>1$), there is a constant $C_q$ such that
\[
|M(t,q)| \le C_q\, t e^{2t}.
\]
From these estimates on $M(t,q)$ we see that the integral appearing in \eqref{E:Theta-expn-step2} converges for $x>0$.
It now only remains to show the positivity of the $g_j(t)$ for $t>0$.

Let us first set $h_0(t) = 2-2e^t+t+t e^t$, so that $g_0(t) = e^t h_0(t)$.
Calculating derivatives,
\begin{equation*}
h_0'(t) = 1 + (t-1)e^t, \qquad h_0''(t) = te^t,
\end{equation*}
and so $0 = h_0(0) = h'_0(0) = h''(0)$. 
Also, clearly $h_0''(t)>0$, for $t>0$ which now implies that $h_0'(t)>0$, which in turn implies $h_0(t)>0$.
Thus $g_0(t)>0$ for $t>0$.

Now let $h_1(t) = 1-e^t+te^t$, so that $g_1(t) = 2(e^t-1)h_1(t)$.
We see that
\[
h_1'(t) = t e^t,
\]
and so $0 = h_1(0) = h_1'(0)$.
Clearly $h_1'(t)>0$ for $t>0$, which implies $h_1(t)>0$ and therefore $g_1(t)>0$ for $t>0$.

Finally, it is immediate from the formula that $g_2(t)>0$ for $t>0$.
\end{proof}

\subsection{Completely monotone functions}\label{SS:completely-monotone-functions}
For real $q$ define the following function
\begin{equation}\label{E:def-of-Fq}
F_q(x) = \Theta(x+q,q)-x-2q+\frac{1}{2}.
\end{equation}

\begin{theorem}\label{T:Theta-et-al-is-completely-monotone-q>=1}
Let $q \in (-\infty,0] \cup [1,\infty)$. 
The function $x \mapsto F_q(x)$ is strictly completely monotone on $x>0$.
\end{theorem}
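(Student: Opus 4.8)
By Lemma \ref{L:lemma-on-form-of-Theta}, we have the integral representation
\[
F_q(x) = \int_0^\infty \frac{M(t,q)}{(e^t-1)^3}\,e^{-xt}\,dt,
\]
so $F_q(x)$ is the Laplace transform of the measure $d\mu_q(t) = \frac{M(t,q)}{(e^t-1)^3}\,dt$ on $(0,\infty)$. By the Bernstein-Widder theorem (Theorem \ref{T:Bernstein-Widder}, applied with $c=0$), strict complete monotonicity of $F_q$ on $(0,\infty)$ is equivalent to $\mu_q$ being a (nonzero) finite positive Borel measure — equivalently, since $(e^t-1)^3>0$, to the pointwise positivity $M(t,q)>0$ for all $t>0$. (The finiteness/convergence was already checked inside the proof of Lemma \ref{L:lemma-on-form-of-Theta}.) So the entire theorem reduces to the elementary-looking inequality: for $q\in(-\infty,0]\cup[1,\infty)$ and $t>0$,
\[
M(t,q) = g_0(t) - g_1(t)\,q + g_2(t)\,q^2 > 0.
\]

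\textbf{Carrying out the reduction.} Viewing $M(t,q)$ as a quadratic in $q$ with positive leading coefficient $g_2(t)>0$, its minimum over $q\in\R$ is at $q = \frac{g_1(t)}{2g_2(t)}$. I expect that this critical value lies strictly between $0$ and $1$ for every $t>0$ — indeed the Taylor expansion $M(t,q) = (q^2-q+\tfrac16)t^3 + O(t^4)$ near $t=0$ shows the small-$t$ behavior is governed by $q^2-q+\tfrac16$, which is positive at $q=0,1$ and has its vertex at $q=\tfrac12$, consistent with $\frac{g_1}{2g_2}\to\tfrac12$ as $t\to0^+$. Granting $0<\frac{g_1(t)}{2g_2(t)}<1$, the quadratic $q\mapsto M(t,q)$ is strictly increasing for $q\ge 1$ and strictly decreasing for $q\le 0$, so on $(-\infty,0]\cup[1,\infty)$ its infimum is attained at the endpoints $q=0$ and $q=1$. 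Thus it suffices to verify the two single-variable inequalities
\[
M(t,0) = g_0(t) > 0 \quad\text{and}\quad M(t,1) = g_0(t) - g_1(t) + g_2(t) > 0 \qquad (t>0).
\]
The first is exactly the positivity of $g_0$ established in Lemma \ref{L:lemma-on-form-of-Theta}. For the second, I would compute $g_0(t)-g_1(t)+g_2(t)$ explicitly in terms of $e^t$ and $t$, factor out the obvious positive powers of $e^t-1$ or $e^t$, and reduce to showing positivity of a remaining factor; the standard device (used repeatedly in Lemma \ref{L:lemma-on-form-of-Theta}) is to show the factor and enough of its derivatives vanish at $t=0$ while the top derivative is manifestly positive, forcing positivity for $t>0$.

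\textbf{The main obstacle.} The conceptual content is entirely in the Bernstein-Widder reduction, which is immediate; the real work is the two elementary inequalities $g_0(t)>0$ and $g_0(t)-g_1(t)+g_2(t)>0$ together with the claim $0 < g_1(t)/(2g_2(t)) < 1$ that lets one reduce to endpoints. The first of these is already done. The bounds on $g_1/(2g_2)$ and the positivity of $M(t,1)$ are the steps I expect to require the most care: they amount to inequalities between exponentials and polynomials of the type $e^t$ vs.\ $1+t+\cdots$, provable by repeated differentiation and evaluation at $t=0$, but one must set up the right auxiliary functions so that each successive derivative is genuinely sign-definite. As a fallback, if the endpoint-reduction is delicate, one can instead bound the discriminant: it suffices to show $g_1(t)^2 < 4 g_0(t) g_2(t)$ for those $t$ where $\frac{g_1}{2g_2}\in[0,1]$ would otherwise be in doubt — but I anticipate the endpoint approach is cleaner since both $M(t,0)$ and $M(t,1)$ have transparent closed forms.
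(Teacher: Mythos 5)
Your proposal is correct and follows the same overall strategy as the paper: both reduce the theorem, via the integral representation of Lemma \ref{L:lemma-on-form-of-Theta} and Bernstein--Widder, to the pointwise positivity of the quadratic $q\mapsto M(t,q)$ on $(-\infty,0]\cup[1,\infty)$. The only difference is in how that quadratic is analyzed: the paper computes the discriminant and shows both roots $s_1(t),s_2(t)$ lie in $(0,1)$, whereas you locate the vertex $g_1(t)/(2g_2(t))$ in $(0,1)$ and check positivity at the endpoints $q=0,1$. These are equivalent in content, and the steps you flag as the remaining work are in fact already done: the vertex bounds reduce to $e^t>1+t$ and to $h_1(t)=1-e^t+te^t>0$ (proved in Lemma \ref{L:lemma-on-form-of-Theta}), while the endpoint values collapse to $M(t,0)=g_0(t)$ and
\[
M(t,1)=g_0(t)-g_1(t)+g_2(t)=(t-2)e^t+t+2,
\]
which is exactly the function the paper calls $E(t)$ (and $h_0(t)$ in the Lemma) and shows positive by the derivative-at-zero device you describe. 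So your plan closes with no new inequalities beyond those already in the paper; arguably it is slightly cleaner since it avoids the square root in the root formulas, though it loses the paper's observation that the bound $s_1(t)<1$ is sharp as $t\to\infty$ (which underlies Remark \ref{R:improving-the-endpoint}).
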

\begin{proof}
From Lemma \ref{L:lemma-on-form-of-Theta}, we have
\begin{equation}\label{E:Fq-integral-representation}
F_q(x) = \int_0^\infty \frac{M(t,q)}{(e^t-1)^3}\, e^{-xt}\, dt,
\end{equation}
where $M(t,q) = g_0(t) - g_1(t)q + g_2(t)q^2$, and the $g_j$ are given in \eqref{E:def-of-g_j}.

Regarding $M(t,q)$ as a quadratic in the $q$ variable, the discriminant is
\begin{align}
\Delta(t) &= g_1(t)^2 - 4 g_0(t)g_2(t) \notag\\
&= 4 - 16 e^t + 24 e^{2t} - 16 e^{3t} + 
4 e^{4t} + (-4 e^t + 8 e^{2t} - 4 e^{3t}) t^2 \notag \\
&= 4 (e^t-1)^2 (1 - 2 e^t + e^{2t} - e^t t^2). \label{E:quadratic-discriminant}
\end{align}
The roots to $M(t,q) = 0$ are therefore given by taking
\[
q = \frac{g_1(t) \pm \sqrt{\Delta(t)}}{2g_2(t)};
\]
we now label these roots as
\begin{subequations}
\begin{align}
s_1(t) &= \frac{t e^t + 1 - e^t + \sqrt{1 - 2 e^t + e^{2t} - t^2 e^t }}{t(e^t-1)} \label{E:root1-s1},\\
s_2(t) &= \frac{t e^t + 1 - e^t - \sqrt{1 - 2 e^t + e^{2t} - t^2 e^t }}{t(e^t-1)}\label{E:root2-s2}.
\end{align}
\end{subequations}
Observe that the term under the square root is positive for $t>0$:
\begin{align*}
1 - 2 e^t + e^{2t} - t^2 e^t > 0 \qquad \Longleftrightarrow \qquad \cosh{t} > 1 + \frac{t^2}{2} \qquad \Longleftrightarrow \qquad \sum_{j=2}^\infty \frac{t^{2j}}{(2j)!} > 0.
\end{align*}

We now claim for $t>0$ that 
\begin{equation}\label{E:-1<s2<s1<0}
0 < s_2(t) < s_1(t) < 1.
\end{equation}
The fact that $s_2(t) < s_1(t)$ is immediate. 
The inequality $s_2(t) > 0$ follows from the fact that $M(t,q) > 0$ for all $q \le 0$ and $t>0$ (since each $g_j(t) > 0$ for $t>0$).

For the remaining inequality we show that it is both true and sharp.
Indeed,
\begin{align*}
s_1(t) < 1 \qquad &\Longleftrightarrow \qquad te^t - t > t e^t + 1 - e^t + \sqrt{1 - 2 e^t + e^{2t} - t^2 e^t} \notag \\
&\Longleftrightarrow \qquad e^t - 1 - t > \sqrt{1 - 2 e^t + e^{2t} - t^2 e^t} \notag \\
&\Longleftrightarrow \qquad  (e^t-1)^2 - 2 t (e^t-1) + t^2 > (e^t-1)^2 - t^2 e^t \notag \\
&\Longleftrightarrow \qquad (t-2)e^t + t + 2 > 0. \label{E:def-of-polynomial-E}
\end{align*}
Now set $E(t) := (t-2)e^t + t + 2$.
Then 
\[
E'(t) = (t-1)e^t + 1, \qquad E''(t) = t e^t,
\]
and so $0 = E(0) = E'(0) = E''(0)$. 
Since $E''(t)>0$ for $t>0$, we have that $E'(t)>0$ which in turn implies that $E(t)>0$, thus confirming that $s_1(t) < 1$.
(Notice that the function $E$, along with the same line of reasoning given here, appeared in Lemma \ref{L:lemma-on-form-of-Theta}, where the function was called $h_0$.)

For the sharpness of this inequality, observe that
\begin{align*}
\lim_{t \to \infty} s_1(t) = \lim_{t \to \infty} \frac{t e^t}{t e^t} \cdot \frac{1 + t^{-1}(e^{-t} - 1) +\sqrt{t^{-2} (1-e^{-t})^2 - e^{-t}})}{1-e^{-t}} = 1.
\end{align*}

Now since $0 < s_2(t) < s_1(t) < 1$ for all $t>0$, we conclude that $M(t,q) > 0$ for $t>0$ and $q \in (-\infty,0]\cup[1,\infty)$.
The Bernstein-Widder theorem now shows that $F_q(x)$ is strictly completely monotone for $q$ in this range.
\end{proof}

\begin{remark}\label{R:improving-the-endpoint}
The proof of Theorem \ref{T:Theta-et-al-is-completely-monotone-q>=1} shows the right endpoint $q=1$ of the interval $0 < s_2(t) < s_1(t) < 1$ is sharp.
No claim of sharpness is made for the left endpoint, however.
Numerical evidence suggests that $s_2(t)$ is a strictly increasing function on $t>0$, which would imply that the left endpoint can be slightly improved from $q=0$ to
\begin{equation*}
q = \lim_{t \searrow 0} s_2(t) = \frac{3-\sqrt{3}}{6} \approx 0.211325.
\end{equation*}
In any case, we can say that $F_q(x)$ fails to be completely monotone for $q \in \big(\tfrac{3-\sqrt{3}}{6},1\big)$.
\hfill $\lozenge$
\end{remark}

\begin{corollary}\label{C:Phi-estimate-q>=0}
Choose $q \in (-\infty,0]\cup[1,\infty)$. 
Then $\Phi(r,q) < 1$ for $r>q$.
\end{corollary}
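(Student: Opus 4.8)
The plan is to obtain the corollary immediately from Theorem~\ref{T:Theta-et-al-is-completely-monotone-q>=1}, using only the first-order ($m=1$) case of the strict complete monotonicity condition \eqref{E:def-completely-monotone}. All of the genuine work has already been carried out in Theorem~\ref{T:Theta-et-al-is-completely-monotone-q>=1}, where the sign of $M(t,q)$ was pinned down by locating its roots $s_1(t),s_2(t)$ and verifying $0<s_2(t)<s_1(t)<1$; the present statement is just a one-line consequence.

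First I would recall from \eqref{E:def-of-Fq} that $F_q(x)=\Theta(x+q,q)-x-2q+\tfrac12$ and differentiate once in $x$. By the chain rule together with \eqref{E:r-deriv-of-Theta-is-Phi}, which says $\tfrac{\dee\Theta}{\dee r}(r,q)=\Phi(r,q)$, this gives $F_q'(x)=\Phi(x+q,q)-1$ for $x>0$.

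Then, for $q\in(-\infty,0]\cup[1,\infty)$, Theorem~\ref{T:Theta-et-al-is-completely-monotone-q>=1} tells us that $F_q$ is strictly completely monotone on $(0,\infty)$, so taking $m=1$ in \eqref{E:def-completely-monotone} gives $-F_q'(x)>0$, that is, $\Phi(x+q,q)<1$, for all $x>0$. Substituting $r=x+q$ (which ranges over exactly $r>q$) finishes the proof.

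I do not expect any real obstacle: the substance lies entirely in Theorem~\ref{T:Theta-et-al-is-completely-monotone-q>=1}. One could optionally remark that the inequality is asymptotically sharp and cannot be improved to a constant bounded away from $1$, since the proposition in Section~\ref{SS:an-initial-estimate} establishes $\lim_{r\to\infty}\Phi(r,q)=1$; but this observation is not needed for the statement.
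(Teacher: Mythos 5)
Your proof is correct and follows the paper's own argument essentially verbatim: differentiate $F_q$ once, invoke the $m=1$ case of strict complete monotonicity from Theorem~\ref{T:Theta-et-al-is-completely-monotone-q>=1}, and substitute $r=x+q$. The only cosmetic difference is whether one differentiates in $x$ and substitutes afterward (as you do) or works directly with $r\mapsto F_q(r-q)$ (as the paper does).
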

\begin{proof}
Fix $q \in (-\infty,0]\cup[1,\infty)$ and set $x=r-q$. 
Theorem \ref{T:Theta-et-al-is-completely-monotone-q>=1} now says the function
\[
r \mapsto F_q(r-q) =  \Theta(r,q) - r - q + \frac{1}{2}
\]
is strictly completely monotone for $r>q$. 
Now differentiate in $r$ to see that
\[
\frac{\dee \Theta}{\dee r}(r,q) - 1 < 0,
\]
for all $r>q$.
Since $\frac{\dee \Theta}{\dee r}(r,q) = \Phi(r,q)$ by \eqref{E:r-deriv-of-Theta-is-Phi}, we obtain the result.
\end{proof}

\begin{remark}
Notice that $q=\frac{2}{3}$ (the $q$-value corresponding to the preferred symbol function) lies in the interval for which complete monotonicity of $F_q(x)$ is known to fail; see Remark \ref{R:improving-the-endpoint}.
This means the Berstein-Widder approach is not applicable, and the preferred symbol function must be handled using other means (see Section \ref{S:EM-pref-symbol-function}).
\hfill $\lozenge$
\end{remark}

\subsection{Consequences for the Leray transform}\label{SS:Consequences-for-Leray}

The results in Sections \ref{SS:Berstein-Widder} and \ref{SS:completely-monotone-functions} are now combined with Lemma \ref{L:Ratio-into-series}.
Recall (Corollary \ref{C:Leray-boundedness}) that the Leray transform is bounded on $L^2(M_\gamma,\mu_d)$ if and only if $d \in (-1,2\gamma-1) = \ci_0(\gamma)$.
We now see that the norms of the sub-Leray operators $\bm{L}_k$ are {\em strictly decreasing} in $k$ for a range of $d$ values with a combined length of more than half the length of $\ci_0(\gamma)$.

\begin{theorem}\label{T:Leray-boundedness-from-Bernstein-Widder}
Let $\bm{L}_k$ denote the sub-Leray operator for each non-negative integer $k$.
\begin{enumerate}
\item If $\gamma>2$ and $d \in (-1,1] \cup [\gamma-1,2\gamma-1)$, then the function $k \mapsto J(d,\gamma,k)$ is strictly decreasing on the non-negative integers.
Thus
\[
\norm{\bm{L}_k}_{L^2(M_\gamma,\mu_{d})} > \norm{\bm{L}_{k+1}}_{L^2(M_\gamma,\mu_{d})}.
\]
\item If $\gamma<2$ and $d \in (-1,\gamma-1] \cup [1,2\gamma-1)$, then the function $k \mapsto J(d,\gamma,k)$ is strictly decreasing on the non-negative integers.
Thus
\[
\norm{\bm{L}_k}_{L^2(M_\gamma,\mu_{d})} > \norm{\bm{L}_{k+1}}_{L^2(M_\gamma,\mu_{d})}.
\]
\end{enumerate}
Consequently, in both settings, the norm of the full Leray transform is 
\[
\norm{\bm{L}}_{L^2(M_\gamma,\mu_d)} = \norm{\bm{L}_0}_{L^2(M_\gamma,\mu_d)}.
\]
\end{theorem}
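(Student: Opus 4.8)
The plan is to translate the measure exponent $d$ into the parameter $q$ of Lemma~\ref{L:Ratio-into-series} and then invoke the polygamma bound $\Phi(r,q)<1$ of Corollary~\ref{C:Phi-estimate-q>=0}, which is precisely the hypothesis of Lemma~\ref{L:Ratio-into-series}(1). Since $\gamma\neq 2$, write $d=\delta_a(\gamma)=a(\gamma-2)+1$ with $a=\tfrac{d-1}{\gamma-2}$ and set $q=1-a$, so that $q=\tfrac{\gamma-1-d}{\gamma-2}$ and $J(d,\gamma,k)=J(\delta_{1-q}(\gamma),\gamma,k)$ for every $k$.

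The second step is to record how the $d$-intervals in the statement look in terms of $q$. The map $d\mapsto q$ is affine with slope $-\tfrac{1}{\gamma-2}$, hence orientation-reversing when $\gamma>2$ and orientation-preserving when $\gamma<2$. Substituting the endpoints $d=-1,\,1,\,\gamma-1,\,2\gamma-1$ (which give $q=\pm\tfrac{\gamma}{|\gamma-2|}$, $q=1$, $q=0$, and $q=\mp\tfrac{\gamma}{|\gamma-2|}$ respectively), one checks directly that in case (1) the set $d\in(-1,1]\cup[\gamma-1,2\gamma-1)$ and in case (2) the set $d\in(-1,\gamma-1]\cup[1,2\gamma-1)$ \emph{both} correspond to the single set
\[
q\in\Bigl(-\tfrac{\gamma}{|\gamma-2|},\,0\Bigr]\cup\Bigl[1,\,\tfrac{\gamma}{|\gamma-2|}\Bigr).
\]
In particular, in both cases $|q|<\tfrac{\gamma}{|\gamma-2|}$ and $q\in(-\infty,0]\cup[1,\infty)$. (Alternatively, one could first use the Hölder symmetry of Theorem~\ref{T:Holder-symmetry-symbol} to reduce to $\gamma\in(1,2)$, since Corollary~\ref{C:Phi-estimate-q>=0} is independent of $\gamma$; but handling both ranges of $\gamma$ directly is no harder.)

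With this in hand, both hypotheses of Lemma~\ref{L:Ratio-into-series}(1) are met: the bound $|q|<\tfrac{\gamma}{|\gamma-2|}$ guarantees via Lemma~\ref{L:Control-on-q-for-boundedness} that $J(\delta_{1-q}(\gamma),\gamma,k)$ is finite for all $k\ge 0$, while $q\in(-\infty,0]\cup[1,\infty)$ allows Corollary~\ref{C:Phi-estimate-q>=0} to supply $\Phi(r,q)<1$ for $r>q$. Lemma~\ref{L:Ratio-into-series}(1) then yields that $k\mapsto J(d,\gamma,k)$ is strictly decreasing on the non-negative integers, and \eqref{E:norm_Lk-sqrt} converts this into $\norm{\bm{L}_k}_{L^2(M_\gamma,\mu_d)}>\norm{\bm{L}_{k+1}}_{L^2(M_\gamma,\mu_d)}$, proving (1) and (2). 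For the final assertion, a strictly decreasing sequence attains its supremum at its first term, so \eqref{E:Leray-norm-as-sup} gives $\norm{\bm{L}}_{L^2(M_\gamma,\mu_d)}=\sqrt{J(d,\gamma,0)}=\norm{\bm{L}_0}_{L^2(M_\gamma,\mu_d)}$.

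No serious obstacle remains at this stage: the analytic difficulty of the theorem has already been absorbed into Corollary~\ref{C:Phi-estimate-q>=0} and, behind it, the Bernstein--Widder / complete-monotonicity argument of Theorem~\ref{T:Theta-et-al-is-completely-monotone-q>=1}. The one point requiring genuine care is the bookkeeping in the second step: the affine change $d\leftrightarrow q$ reverses orientation exactly when $\gamma>2$, which is why the $d$-intervals are written differently in parts (1) and (2) and yet collapse to the same statement about $q$, and the half-open endpoints must be tracked accordingly so that the boundary cases $d=-1$ and $d=2\gamma-1$ (where the symbol function first fails to be finite) are correctly excluded.
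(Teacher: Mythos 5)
Your proposal is correct and follows essentially the same route as the paper: translate $d$ into the parameter $q$ via $d=\delta_{1-q}(\gamma)$, combine Lemma~\ref{L:Control-on-q-for-boundedness} with Corollary~\ref{C:Phi-estimate-q>=0} to meet the hypotheses of Lemma~\ref{L:Ratio-into-series}(1), and conclude via \eqref{E:norm_Lk-sqrt} and \eqref{E:Leray-norm-as-sup}. The only difference is cosmetic — you run the affine change of variables from $d$ to $q$ and track orientation, whereas the paper goes from $q$ to $d$ in four explicit cases — and your endpoint bookkeeping checks out.
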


\begin{proof}
By Corollary \ref{C:Phi-estimate-q>=0}, $\Phi(r,q)<1$ for $q \in (-\infty,0] \cup [1,\infty)$ and $r>q$.
On the other hand, Lemma \ref{L:Control-on-q-for-boundedness} says that the symbol function $J(\delta_{1-q}(\gamma),\gamma,k)$ is finite for all non-negative integers $k$ if and only if $|q|< \frac{\gamma}{|\gamma-2|}$. 

Upon intersecting these two intervals, Lemma \ref{L:Ratio-into-series} says that for $\gamma \neq 2$, the function $k \mapsto J(\delta_{1-q}(\gamma),\gamma,k)$ is strictly decreasing on the non-negative integers for 
\begin{equation*}\label{E:range-of-q-strict-decreasing-subLeray}
q \in \Big( \tfrac{-\gamma}{|\gamma-2|},0 \Big] \cup \Big[1, \tfrac{\gamma}{|\gamma-2|} \Big).
\end{equation*}

Let us write
\begin{equation}\label{E:d-in-terms-of-q-strict-decreasing-subLeray}
d = \delta_{1-q}(\gamma) = (1-q)(\gamma-2)+1
\end{equation}
and recall that $\sqrt{J(d,\gamma,k)} = \norm{\bm{L}_k}_{L^2(M_\gamma,\mu_{d})}$ by equation \eqref{E:norm_Lk-sqrt}.

Now consider four separate cases:

When $\gamma>2$ and $q \in \big[1, \tfrac{\gamma}{\gamma-2} \big)$, equation \eqref{E:d-in-terms-of-q-strict-decreasing-subLeray} implies that $k \mapsto \norm{\bm{L}_k}_{L^2(M_\gamma,\mu_{d})}$ is strictly decreasing for $d \in (-1,1]$.

When $\gamma>2$ and $q \in \big( \tfrac{-\gamma}{\gamma-2},0 \big]$, equation \eqref{E:d-in-terms-of-q-strict-decreasing-subLeray} implies that $k \mapsto \norm{\bm{L}_k}_{L^2(M_\gamma,\mu_{d})}$ is strictly decreasing for $d \in [\gamma-1,2\gamma-1)$.

When $\gamma<2$ and $q \in \big[1, \tfrac{\gamma}{2-\gamma} \big)$, equation \eqref{E:d-in-terms-of-q-strict-decreasing-subLeray} implies that $k \mapsto \norm{\bm{L}_k}_{L^2(M_\gamma,\mu_{d})}$ is strictly decreasing for $d \in [1,2\gamma-1)$.

When $\gamma<2$ and $q \in \big( \tfrac{-\gamma}{2-\gamma}, 0\big]$, equation \eqref{E:d-in-terms-of-q-strict-decreasing-subLeray} implies that $k \mapsto \norm{\bm{L}_k}_{L^2(M_\gamma,\mu_{d})}$ is strictly decreasing for $d \in (-1,\gamma-1]$.

This establishes both (1) and (2). 
In all settings encompassed by these two cases, the $L^2(M_\gamma,d)$-norm of $\bm{L}_k$ decreases with $k$, implying that $\bm{L}_0$ is the sub-Leray operator with the biggest norm. 
Equation \eqref{E:Leray-norm-as-sup} now says that $\norm{\bm{L}}_{L^2(M_\gamma,d)} = \norm{\bm{L}_0}_{L^2(M_\gamma,d)}$.
\end{proof}

Both the pairing measure $\sigma = r^{\gamma-1}\,dr\w d\theta \w ds$ ($d=\gamma-1$) and Lebesgue measure $\mu_1 = r \,dr\w d\theta \w ds$ ($d=1$) fall within the range of applicability of Theorem \ref{T:Leray-boundedness-from-Bernstein-Widder}.
We now record the norms of the Leray transform in both settings:

\begin{corollary}\label{C:Leray-norm-pairing}
Let $\sigma = r^{\gamma-1}\,dr\w d\theta \w ds$.
The Leray transform is bounded on $L^2(M_\gamma,\sigma)$ with norm
\begin{equation*}
\norm{\bm{L}}_{L^2(M_\gamma,\sigma)} = \frac{\gamma}{2\sqrt{\gamma-1}}.
\end{equation*}
\end{corollary}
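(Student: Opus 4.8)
The plan is to reduce the norm computation to the monotonicity already established in Theorem~\ref{T:Leray-boundedness-from-Bernstein-Widder} and then to evaluate the symbol function at $k=0$. First I would dispose of the Heisenberg case $\gamma=2$: there $\sigma=\mu_1$ coincides with Lebesgue measure on parameter space and $\bm{L}$ is the Szeg\H{o} projection, so $\norm{\bm{L}}_{L^2(M_2,\sigma)}=1$, which is precisely $\tfrac{\gamma}{2\sqrt{\gamma-1}}$ evaluated at $\gamma=2$; it then remains to treat $\gamma\neq 2$.

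For $\gamma\neq 2$, I would observe that $\sigma=\mu_d$ with $d=\gamma-1$, and that $\gamma-1\in(-1,2\gamma-1)=\ci_0(\gamma)$, so boundedness of $\bm{L}$ is already guaranteed by Corollary~\ref{C:Leray-boundedness}. The key point is that $d=\gamma-1$ lies in the range covered by Theorem~\ref{T:Leray-boundedness-from-Bernstein-Widder}: it is the left endpoint of $[\gamma-1,2\gamma-1)$ when $\gamma>2$ and the right endpoint of $(-1,\gamma-1]$ when $\gamma<2$. Equivalently, in the $\delta_{1-q}$ parameterization of \eqref{E:def-of-delta}, $d=\gamma-1$ corresponds to $q=0$, which sits in the interval $(-\infty,0]$ of Corollary~\ref{C:Phi-estimate-q>=0}; this gives $\Phi(r,0)<1$ for $r>0$, and hence, via Lemma~\ref{L:Ratio-into-series}, the strict decrease of $k\mapsto J(\gamma-1,\gamma,k)$ on the non-negative integers. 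Then \eqref{E:Leray-norm-as-sup} shows the supremum defining $\norm{\bm{L}}_{L^2(M_\gamma,\sigma)}$ is attained at $k=0$, so $\norm{\bm{L}}_{L^2(M_\gamma,\sigma)}=\sqrt{J(\gamma-1,\gamma,0)}$.

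The final step is a direct evaluation of \eqref{E:def-symbol-function} at $k=0$, $d=\gamma-1$. Here $\tfrac{2k+1+d}{\gamma}=\tfrac{\gamma}{\gamma}=1$, hence also $2k+2-\tfrac{2k+1+d}{\gamma}=1$, so all three Gamma factors equal $\Gamma(1)=1$ and what survives is $\bigl(\tfrac{\gamma}{2}\bigr)^{2}(\gamma-1)^{-1}=\tfrac{\gamma^{2}}{4(\gamma-1)}$, whose square root is $\tfrac{\gamma}{2\sqrt{\gamma-1}}$. The genuinely substantial work — the Bernstein-Widder complete-monotonicity argument — has already been carried out in establishing Theorem~\ref{T:Leray-boundedness-from-Bernstein-Widder}; the only care needed in this corollary is checking that $d=\gamma-1$ belongs to the closed-endpoint range of that theorem (rather than merely its interior) and that the Gamma factors collapse cleanly at $k=0$, both of which are immediate. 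I therefore do not anticipate a serious obstacle here.
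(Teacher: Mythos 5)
Your proof is correct and follows essentially the same route as the paper: invoke Theorem~\ref{T:Leray-boundedness-from-Bernstein-Widder} (equivalently, $q=0$ in Corollary~\ref{C:Phi-estimate-q>=0} and Lemma~\ref{L:Ratio-into-series}) to place the supremum in \eqref{E:Leray-norm-as-sup} at $k=0$, then evaluate $J(\gamma-1,\gamma,0)=\tfrac{\gamma^2}{4(\gamma-1)}$. Your explicit separate treatment of $\gamma=2$ is a small point of extra care the paper's one-line proof glosses over, since Theorem~\ref{T:Leray-boundedness-from-Bernstein-Widder} is stated only for $\gamma\neq 2$, but that case is indeed covered by Theorem~\ref{T:gamma=2-decreasing-symbol} and the formula is continuous there.
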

\begin{proof}
Since $\sigma$ corresponds to $d = \gamma-1$, Theorem \ref{T:Leray-boundedness-from-Bernstein-Widder} applies. Thus by \eqref{E:def-symbol-function},
\[
\norm{\bm{L}}_{L^2(M_\gamma,\sigma)} = \norm{\bm{L}_0}_{L^2(M_\gamma,\sigma)} = \sqrt{C_\sigma(\gamma,0)}  = \frac{\gamma}{2\sqrt{\gamma-1}}.
\]
Recall that the pairing symbol function $C_\sigma(\gamma,k) = J(\gamma-1,\gamma,k)$.
\end{proof}

\begin{remark}
Barrett and Edholm calculated $\norm{\bm{L}}_{L^2(M_\gamma,\sigma)}$ in \cite[Proposition 4.18]{BarEdh22} using a different approach.
The argument given there is tailored to the pairing measure ($q=0$) and seems not to easily generalize to other measure settings.
\hfill $\lozenge$
\end{remark}

\begin{corollary}\label{C:Leray-norm-Lebesgue}
Let $\mu_1 = r\, dr \w d\theta \w ds$.
The Leray transform $\bm{L}$ is bounded on $L^2(M_\gamma,\mu_1)$ with norm
\begin{equation*}
\norm{\bm{L}}_{L^2(M_\gamma,\mu_1)} = 
\begin{cases}
(\gamma-1)^{\frac{1}{\gamma}-1} \sqrt{ \frac{\pi}{4}(\gamma-2)\gamma \csc\big( \frac{2\pi}{\gamma} \big)}, & \gamma \in (1,2)\cup(2,\infty) \\
1, & \gamma = 2.
\end{cases}
\end{equation*}
The formula is continuous at $\gamma = 2$.
\end{corollary}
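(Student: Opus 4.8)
The plan is to reproduce, with $d=1$, exactly the argument used for Corollary~\ref{C:Leray-norm-pairing}. First I would check that $d=1$ falls inside the range covered by Theorem~\ref{T:Leray-boundedness-from-Bernstein-Widder}: if $\gamma>2$ then $1\in(-1,1]$, and if $\gamma<2$ then $1\in[1,2\gamma-1)$ (using $\gamma>1$). Hence for every $\gamma\neq 2$ the sequence $k\mapsto J(1,\gamma,k)$ is strictly decreasing on the non-negative integers, and \eqref{E:Leray-norm-as-sup} gives
\[
\norm{\bm{L}}_{L^2(M_\gamma,\mu_1)} = \norm{\bm{L}_0}_{L^2(M_\gamma,\mu_1)} = \sqrt{J(1,\gamma,0)}.
\]

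Next I would evaluate $J(1,\gamma,0)$ directly from \eqref{E:def-symbol-function}. With $k=0$ and $d=1$ one has $\tfrac{2k+1+d}{\gamma}=\tfrac{2}{\gamma}$ and $\Gamma(k+1)^2=1$, so
\[
J(1,\gamma,0) = \Gamma\big(\tfrac{2}{\gamma}\big)\,\Gamma\big(2-\tfrac{2}{\gamma}\big)\Big(\tfrac{\gamma}{2}\Big)^{2}(\gamma-1)^{\frac{2}{\gamma}-2}.
\]
Applying the factorial property $\Gamma(2-z)=(1-z)\Gamma(1-z)$ with $z=\tfrac{2}{\gamma}$ and then Euler's reflection formula $\Gamma(z)\Gamma(1-z)=\pi\csc(\pi z)$ yields $\Gamma(\tfrac{2}{\gamma})\Gamma(2-\tfrac{2}{\gamma}) = \tfrac{\gamma-2}{\gamma}\,\pi\csc\big(\tfrac{2\pi}{\gamma}\big)$, hence
\[
J(1,\gamma,0) = \frac{\pi}{4}\,\gamma(\gamma-2)\csc\big(\tfrac{2\pi}{\gamma}\big)\,(\gamma-1)^{\frac{2}{\gamma}-2}.
\]
Taking square roots produces the claimed formula; I would note in passing that $\gamma(\gamma-2)$ and $\csc(\tfrac{2\pi}{\gamma})$ carry the same sign on $(1,2)\cup(2,\infty)$, so the radicand is nonnegative. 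For $\gamma=2$ the value is $J(1,2,0)=1$ by \eqref{E:symbol-function_gamma=2} (equivalently by Remark~\ref{R:Heisenberg-Szego}, since $\bm{L}$ is the Szeg\H{o} projection on $L^2(M_2,\mu_1)$, and also as the $d=1$ case of Corollary~\ref{C:Leray-norm-comp-gamma=2}). Continuity at $\gamma=2$ is a short limit computation: $(\gamma-1)^{\frac1\gamma-1}\to 1$, and writing $\gamma=2+\varepsilon$ gives $\tfrac{2\pi}{\gamma}=\pi-\tfrac{\pi\varepsilon}{2}+O(\varepsilon^2)$, so $\csc(\tfrac{2\pi}{\gamma})\sim \tfrac{2}{\pi\varepsilon}$ and $\tfrac{\pi}{4}\gamma(\gamma-2)\csc(\tfrac{2\pi}{\gamma})\to 1$; L'H\^opital's rule gives the same conclusion.

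I do not expect any genuine obstacle: this is an immediate corollary once Theorem~\ref{T:Leray-boundedness-from-Bernstein-Widder} (the real input, obtained through the Bernstein-Widder machinery and the polygamma estimate $\Phi(r,q)<1$) is available, and the remaining steps are the same $\Gamma$-function bookkeeping already performed for Corollaries~\ref{C:Leray-norm-comp-gamma=2} and~\ref{C:Leray-norm-pairing}. The only points demanding a moment's attention are verifying that $d=1$ lies in the \emph{closed} sub-intervals appearing in Theorem~\ref{T:Leray-boundedness-from-Bernstein-Widder} for both $\gamma>2$ and $\gamma<2$, and checking the sign of the expression under the radical.
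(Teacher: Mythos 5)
Your proposal is correct and follows the paper's proof essentially verbatim: it invokes Theorem \ref{T:Leray-boundedness-from-Bernstein-Widder} for $d=1$ to reduce the norm to $\sqrt{J(1,\gamma,0)}$, evaluates this via the factorial property of $\Gamma$ and Euler's reflection formula, and handles $\gamma=2$ and continuity exactly as the paper does. The extra checks you flag (that $1$ lies in the closed sub-intervals for both $\gamma>2$ and $\gamma<2$, and the sign of the radicand) are correct and only make the argument more explicit.
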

\begin{proof} 
Since $\mu_1$ corresponds to $d=1$, Theorem \ref{T:Leray-boundedness-from-Bernstein-Widder} applies once again for $\gamma \neq 2$:
\begin{align*}
\norm{\bm{L}}_{L^2(M_\gamma,\mu_1)} = \norm{\bm{L}_0}_{L^2(M_\gamma,\mu_1)} = \sqrt{J(1,\gamma,0)} &= (\gamma-1)^{\frac{1}{\gamma}-1} \sqrt{ \big( \tfrac{\gamma}{2} \big)^2 \, \Gamma\big( \tfrac{2}{\gamma} \big) \Gamma\big( 2- \tfrac{2}{\gamma} \big) } \\
&= (\gamma-1)^{\frac{1}{\gamma}-1} \sqrt{ \tfrac{\pi}{4} (\gamma-2)\gamma \csc\big( \tfrac{2\pi}{\gamma} \big) },
\end{align*}
where we've used the factorial property of the $\Gamma$-function as well as Euler's reflection formula.

The $\gamma = 2$ case follows from \eqref{E:symbol-function_gamma=2} and
L'Hôpital's rule confirms continuity.
\end{proof}


\section{The preferred symbol function}\label{S:EM-pref-symbol-function}

In this section we prove that the preferred symbol function $k \mapsto C_\nu(\gamma,k)$, $\gamma \neq 2$, is strictly {\em increasing}, in stark contrast with the strictly decreasing symbol function behavior seen in the earlier parts of the paper. 
Our goal is to show
\begin{equation*}
\Phi(r,\tfrac{2}{3}) > 1, \qquad \mathrm{for} \qquad r > \tfrac{2}{3}.
\end{equation*}
Unlike the work in Section \ref{S:series-analysis}, we are unable to invoke complete monotonicity and the Berstein-Widder theorem to prove this inequality, since it is known that the related function $F_q$ is not completely monotone for $q = \frac{2}{3}$; see Remark \ref{R:improving-the-endpoint}.

In place of the integral representations used in the previous section, we use here the infinite series description obtained from \eqref{E:Important-series}:
\begin{equation}\label{E:Phi_when_q=2/3}
\Phi\big(r,\tfrac{2}{3}\big) = \sum_{j=1}^\infty f_r(j), \qquad \mathrm{where} \qquad f_r(j) = \frac{18r(3j-2)}{(3r+3j-2)^3}.
\end{equation}

\subsection{Two tools}\label{SS:Two tools}

Two classical results are crucial to the subsequent analysis.

The first is {\em Descartes' Rule of Signs} (see \cite{Wang2004} for a simple proof):
\begin{proposition}[Descartes]\label{P:Descartes}
Let $p$ be a single variable polynomial with real coefficients, with monomial terms arranged so that exponents appear in ascending order. 
The number of positive roots of $p$ (counted with multiplicities) is either (i) equal to the number of sign changes between consecutive (non-zero) coefficients, or (ii) less than that by an even number.
\end{proposition}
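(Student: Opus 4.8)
The plan is to give a self-contained argument resting only on Rolle's theorem and an elementary count of sign changes, rather than appealing to \cite{Wang2004}. Write $V(p)$ for the number of sign changes in the sequence of nonzero coefficients of $p$ and $Z(p)$ for the number of positive real roots of $p$ counted with multiplicity; the assertion is precisely that $Z(p) \le V(p)$ together with $Z(p) \equiv V(p) \pmod 2$. I would establish these two facts separately, the parity statement being the easy half and needing no induction.

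For the parity, factor $p(x) = c\prod_i(x-\rho_i)$ over $\C$. As $x$ increases from $0^+$ to $+\infty$ along the real axis, a factor $(x-\rho)$ coming from a negative real root, or a quadratic factor coming from a conjugate pair of nonreal roots, never changes sign, while each factor $(x-\rho)$ with $\rho>0$ changes sign exactly once; hence $\operatorname{sign}p(0^+) = (-1)^{Z(p)}\operatorname{sign}p(+\infty)$. Since $\operatorname{sign}p(0^+)$ is the sign of the lowest-order nonzero coefficient of $p$ and $\operatorname{sign}p(+\infty)$ the sign of its leading coefficient, $Z(p)$ has the same parity as the indicator of whether these two signs differ; and since the number of sign changes in any finite sequence has that same parity, $Z(p)\equiv V(p)\pmod 2$.

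For the inequality $Z(p)\le V(p)$ I would induct on $\deg p$. Factoring out the largest power of $x$ changes neither $V$ nor $Z$, so I may assume $p(0)\ne 0$; the constant case is trivial. The sign sequence of the coefficients of $p'$ is obtained from that of $p$ by deleting the (nonzero) constant term, so $V(p') = V(p)$ if that term has the same sign as the next nonzero coefficient, and $V(p') = V(p)-1$ otherwise. By Rolle's theorem, between consecutive positive roots of $p$ lies a positive root of $p'$, and a positive root of $p$ of multiplicity $\mu$ is a positive root of $p'$ of multiplicity $\mu-1$; hence $Z(p)\le Z(p')+1$. Applying the induction hypothesis to $p'$: in the case $V(p') = V(p)-1$ this gives $Z(p)\le Z(p')+1\le V(p')+1 = V(p)$ at once; in the case $V(p') = V(p)$ it gives only $Z(p)\le V(p)+1$, which I then sharpen to $Z(p)\le V(p)$ by invoking the parity statement already proved, since $V(p)+1$ has the wrong parity.

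The one place needing care — and the main obstacle to a clean write-up — is the bookkeeping in the Rolle step: one must correctly combine the positive roots of $p'$ that persist from multiple roots of $p$ with the new intermediate roots supplied by Rolle, and must check that inserting or deleting zero coefficients never alters $V$. Everything else is routine. An alternative I would keep in reserve, should the induction prove awkward, is the lemma that multiplying by $(x-c)$ with $c>0$ raises the number of sign changes by an odd positive integer, applied to the factorization $p = (x-c_1)\cdots(x-c_{Z(p)})\,r$ with $r$ having no positive roots; but the Rolle approach seems the most economical to carry out in full.
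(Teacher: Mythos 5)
Your argument is correct and complete in outline. The parity step is airtight: the factorization over $\C$ gives $\operatorname{sign}p(0^+)=(-1)^{Z(p)}\operatorname{sign}p(+\infty)$ because only the linear factors from positive roots change sign on $(0,\infty)$, and the number of sign changes in any finite sequence of nonzero signs is odd precisely when the first and last entries differ. The inductive step is also sound: differentiation deletes exactly the first entry of the sign sequence (the positive multipliers $k$ preserve signs), so $V(p')\in\{V(p),V(p)-1\}$; and the Rolle count works out, since distinct positive roots $c_1<\dots<c_k$ of $p$ with multiplicities $\mu_i$ yield positive roots of $p'$ of total multiplicity at least $\sum_i(\mu_i-1)+(k-1)=Z(p)-1$, the Rolle roots lying in the open intervals between the $c_i$ and hence not double-counted. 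Using the already-proved parity to dispose of the case $V(p')=V(p)$ is a clean way to avoid the usual extra case analysis on the sign of $p$ near $0$. One should be aware, though, that the paper does not prove this proposition at all: it is classical and is quoted with a citation to Wang's note, whose proof is exactly the ``multiplication by $(x-c)$, $c>0$, increases the number of sign changes by a positive odd integer'' lemma that you keep in reserve. Your Rolle-plus-parity induction is a genuinely different, equally standard route; it trades Wang's single convolution lemma for a derivative bookkeeping argument, and has the advantage of isolating the parity claim as an independent, non-inductive fact. Either write-up would serve, but for the purposes of this paper a citation suffices and no proof is expected.
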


The second is the {\em Euler-Maclaurin formula} relating sums and integrals; see \cite[Section 9.5]{GKPBook}.
For our purposes it is sufficient to use the following first-order version:

\begin{proposition}[Euler-Maclaurin]\label{P:Euler-Macl}
Let $m<n$ be integers and $f \in C^1[m,n]$. 
Then
\begin{equation}\label{E:EulerMacL1stOrder}
\sum_{j=m+1}^n f(j) 
= \int_m^n f(x)\,dx + \frac{f(n)-f(m)}{2} + \int_m^n f'(x) P_1(x)\,dx,
\end{equation}
where $P_1(x) = B_1(x-\lfloor x \rfloor)$, $B_1(x) = x-\tfrac{1}{2}$ is the first Bernoulli polynomial and $\lfloor x \rfloor$ the greatest integer less than or equal to $x$.
\end{proposition}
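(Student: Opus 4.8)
The plan is to prove the identity one unit interval at a time and then sum — the classical route to Euler--Maclaurin. The only analytic input is integration by parts, combined with the elementary observation that on each interval $[k,k+1]$ with $k$ an integer one has $\lfloor x\rfloor = k$, so $P_1(x) = x - k - \tfrac12$ there; in particular $P_1$ agrees on $[k,k+1)$ with an affine function of slope $1$.

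First I would fix an integer $k$ with $m \le k \le n-1$ and integrate by parts on $[k,k+1]$:
\[
\int_k^{k+1} f'(x)\bigl(x-k-\tfrac12\bigr)\,dx = \Bigl[f(x)\bigl(x-k-\tfrac12\bigr)\Bigr]_k^{k+1} - \int_k^{k+1} f(x)\,dx = \frac{f(k+1)+f(k)}{2} - \int_k^{k+1} f(x)\,dx,
\]
using $x-k-\tfrac12 = \tfrac12$ at $x=k+1$ and $=-\tfrac12$ at $x=k$. Since the left-hand integral equals $\int_k^{k+1} f'(x)P_1(x)\,dx$ (the value of $P_1$ at the single endpoint $x=k+1$ being irrelevant to the integral), rearranging yields
\[
f(k+1) = \int_k^{k+1} f(x)\,dx + \frac{f(k+1)-f(k)}{2} + \int_k^{k+1} f'(x)P_1(x)\,dx.
\]

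Next I would sum this over $k = m, m+1, \dots, n-1$. The left-hand side becomes $\sum_{j=m+1}^n f(j)$; the first and third integrals combine by additivity of the integral over $[m,n]$ into $\int_m^n f(x)\,dx$ and $\int_m^n f'(x)P_1(x)\,dx$; and the middle terms telescope, $\sum_{k=m}^{n-1}\tfrac{f(k+1)-f(k)}{2} = \tfrac{f(n)-f(m)}{2}$. This is exactly \eqref{E:EulerMacL1stOrder}.

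There is essentially no genuine obstacle here, since the result is classical. The single point requiring a word of care is the sawtooth nature of $P_1$: it is discontinuous at the integers, but because we work on each \emph{closed} subinterval $[k,k+1]$ through the continuous representative $x-k-\tfrac12$ and $f\in C^1[m,n]$, the integration by parts is valid piece by piece, and the one-point ambiguity in the definition of $P_1$ at integers affects none of the integrals appearing in the argument.
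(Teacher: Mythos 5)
Your proof is correct. The paper does not prove this proposition at all --- it is quoted as a classical fact with a citation to Graham--Knuth--Patashnik --- and your argument (integrate $f'(x)\bigl(x-k-\tfrac12\bigr)$ by parts on each closed unit interval $[k,k+1]$, note that the endpoint values $\pm\tfrac12$ produce the trapezoidal average, then sum and telescope) is precisely the standard derivation, carried out completely and with appropriate care about the one-point discontinuity of $P_1$ at the integers.
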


We refer to the rightmost integral in \eqref{E:EulerMacL1stOrder} as the {\em Bernoulli integral}.

\subsection{Applying Euler-Maclaurin}\label{SS:Application-of-EulerMaclaurin}

We use the Euler-Maclaurin formula to re-express the sum in \eqref{E:Phi_when_q=2/3}, then use carefully chosen bounds to estimate the Bernoulli integral. 
After a considerable amount of analysis, the behavior of $r \mapsto \Phi\big(r,\frac{2}{3}\big)$ is reduced to a problem about the positive real roots of an explicit polynomial of degree 16. 
This in turn is completely understood using Descartes' Rule of Signs and direct evaluation.

The computations in Section \ref{S:EM-pref-symbol-function} and the Appendix involve lengthy manipulations of polynomials and rational functions. While the reader could (in principle) work these out by hand, we highly recommend the use of software when verifying these computations.
We have made a detailed and heavily notated Mathematica notebook available at the Github page of the first author.
Please follow the link given in \cite{Edholm_Leray_nb}.

\begin{theorem}\label{T:EdhShe-series}
If $r>\tfrac{2}{3}$, then $\Phi\big(r,\tfrac{2}{3}\big) = 2r \psi'\big(r+\tfrac{1}{3}\big) + r^2 \psi''\big(r+\tfrac{1}{3}\big) > 1$.
\end{theorem}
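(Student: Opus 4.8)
The plan is to turn the series \eqref{E:Phi_when_q=2/3} into a closed-form rational expression plus a small error via the Euler--Maclaurin formula (Proposition~\ref{P:Euler-Macl}), and then to reduce the inequality $\Phi(r,\tfrac23)>1$ to the positivity of an explicit polynomial, to be settled by Descartes' Rule of Signs (Proposition~\ref{P:Descartes}).

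First I would peel off the $j=1$ term and apply Euler--Maclaurin to $\sum_{j\ge 2} f_r(j)$ on $[1,\infty)$. (Peeling this term off keeps every ingredient bounded as $r\searrow\tfrac23$; applied directly on $[0,\infty)$ the formula would make the integral term and the boundary term each diverge like $(3r-2)^{-3}$, cancelling only against the Bernoulli integral.) The integral is elementary: the substitution $u=3x+3r-2$ gives $\int_1^\infty f_r(x)\,dx = 6r\int_{3r+1}^\infty(u^{-2}-3ru^{-3})\,du = \tfrac{3r(3r+2)}{(3r+1)^2}$, and together with the boundary term $\tfrac{f_r(1)}{2}=\tfrac{9r}{(3r+1)^3}$ this yields
\begin{equation*}
\Phi\big(r,\tfrac23\big) \;=\; \frac{3r(9r^2+9r+5)}{(3r+1)^3} \;+\; R(r), \qquad R(r):=\int_1^\infty f_r'(x)\,P_1(x)\,dx.
\end{equation*}
A short calculation then shows that $\Phi(r,\tfrac23)>1$ is equivalent to $R(r)>\tfrac{1-6r}{(3r+1)^3}$, whose right-hand side is negative for $r>\tfrac23$; so it suffices to prove $R(r)+\tfrac{6r-1}{(3r+1)^3}>0$.

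The crux is a sharp estimate of the Bernoulli integral $R(r)$. The naive bound $|P_1|\le\tfrac12$ is far too lossy, since $\tfrac{6r-1}{(3r+1)^3}=O(r^{-2})$ has exactly the order of magnitude of $R(r)$ itself. I would instead exploit that $P_1$ has mean zero on each period $[n,n+1]$: one further integration by parts on each such interval replaces $P_1$ by the sign-definite kernel $\tfrac12(x-n)(x-n-1)\in[-\tfrac18,0]$ and $f_r'$ by $f_r''$. Since $f_r''$ changes sign exactly once on $[1,\infty)$ (negative on $[1,r+\tfrac23)$, positive afterwards), this permits $R(r)$ to be sandwiched between two explicit \emph{rational} functions of $r$. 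Producing a lower bound that is simultaneously tight enough and rational in $r$ is the main obstacle of the proof, and is the estimate I would isolate and defer to the Appendix.

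Finally, substituting the rational lower bound for $R(r)$ into $R(r)>\tfrac{1-6r}{(3r+1)^3}$ and clearing the common denominators (all positive for $r>\tfrac23$) reduces the theorem to showing that an explicit single-variable polynomial $p(r)$ of degree $16$ is positive for $r>\tfrac23$. Writing $r=\tfrac23+t$ and examining the coefficient list of $p(\tfrac23+t)$ in powers of $t$, Proposition~\ref{P:Descartes} bounds the number of positive roots by the number of sign changes there; if that number is $0$ we are done, and if it is a small even number the putative roots are excluded by checking that $p$ and sufficiently many of its derivatives are positive at $t=0$, or by a direct evaluation on a short interval. Given the size and delicacy of these polynomial computations, carrying them out with computer algebra (as in \cite{Edholm_Leray_nb}) is advisable. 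This establishes $\Phi(r,\tfrac23)>1$ for $r>\tfrac23$.
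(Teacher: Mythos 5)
Your reduction is sound and in fact lands on exactly the same inequality as the paper: after Euler--Maclaurin (whether applied on $[0,\infty)$ as in the paper, or on $[1,\infty)$ after peeling off $j=1$ as you do --- the two are algebraically identical, since the paper's seemingly divergent integral and boundary terms combine into $1+\tfrac{6r-1}{(3r+1)^3}$), everything comes down to showing $R(r)+\tfrac{6r-1}{(3r+1)^3}>0$, where $R(r)=\int_1^\infty f_r'(x)P_1(x)\,dx$ is the paper's $\sum_{N\ge 1}S(r,N)$. You correctly identify that this is where all the difficulty lives and that the naive bound $|P_1|\le\tfrac12$ is hopeless. The problem is that the mechanism you sketch for the crucial estimate does not work, and you defer precisely the step that constitutes the proof.

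Concretely: integrating by parts once more on each period replaces $P_1$ by the kernel $\tfrac12(x-n)(x-n-1)\in[-\tfrac18,0]$, and since $f_r''\le 0$ exactly on $[1,r+\tfrac23]$, the resulting lower bound is $R(r)\ge\tfrac18\int_1^{r+2/3}f_r''(x)\,dx=\tfrac18\bigl(f_r'(r+\tfrac23)-f_r'(1)\bigr)$. But $f_r'(r+\tfrac23)=-\tfrac{1}{8r^2}$ and $f_r'(1)=\tfrac{54r(3r-2)}{(3r+1)^4}\sim\tfrac{2}{r^2}$, so this lower bound is asymptotically $-\tfrac{17}{64r^2}\approx-\tfrac{0.266}{r^2}$, whereas the quantity you must beat is $\tfrac{1-6r}{(3r+1)^3}\sim-\tfrac{2}{9r^2}\approx-\tfrac{0.222}{r^2}$. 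Your sandwich is too weak by a constant factor, uniformly in $r$; no choice of cutoff repairs it. This is why the paper proceeds differently: it evaluates each period integral exactly as the rational function $S(r,N)$, exploits the unimodality of $x\mapsto S(r,x)$ (locating its maximizer $Q_r$ to three terms of its expansion at infinity) to compare $\sum_N S(r,N)$ with $\int_2^\infty S(r,x)\,dx$ at the cost of a single term bounded by $S(r,Q_r)<\tfrac{16}{3125r^3}$ --- itself a delicate Appendix-length argument --- and the resulting lower bound $H(r)$ is \emph{not} rational (the integral of $S(r,x)$ produces a logarithm). The degree-$16$ polynomial to which Descartes is ultimately applied is the numerator of $H''$, not of $H$; positivity of $H$ is then recovered by integrating back from $\lim_{r\to\infty}H'(r)=\lim_{r\to\infty}H(r)=0$. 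So the final polynomial step you anticipate is roughly right in spirit, but the path to it requires a genuinely sharper, and structurally different, treatment of the Bernoulli integral than the one you propose.
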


\begin{proof}
Start with the function $f_r$ from \eqref{E:Phi_when_q=2/3} and use the first order Euler-Maclaurin formula \eqref{E:EulerMacL1stOrder} with $m=0$ and $n \to \infty$. 
Computation yields the first two terms on the right side of \eqref{E:EulerMacL1stOrder}:
\begin{equation}\label{E:EM-eval-A}
\int_0^\infty f_r(x)\,dx = 1-\frac{4}{(3r-2)^2}, \qquad 
\frac{f_r(\infty)-f_r(0)}{2} = \frac{18r}{(3r-2)^3},
\end{equation}
where $f_r(\infty) = \lim_{n\to\infty}f_r(n)$.
Computation also shows that
\begin{equation*}
f'_r(x) = \frac{54r(4+3r-6x)}{(3x+3r-2)^4},
\end{equation*}
from which the following expression for the Bernoulli integral in \eqref{E:EulerMacL1stOrder} is calculated:
\begin{equation*}
\int_0^\infty f'_r(x) P_1(x)\,dx = \sum_{N=0}^{\infty} \int_0^1 f_r'(x+N) B_1(x)\,dx = \sum_{N=0}^\infty \frac{81r(9N^2-3N-9r^2-2)}{(3r+3N+1)^3(3r+3N-2)^3}.
\end{equation*}
Denote the rational function appearing in the sum above by
\begin{equation}\label{E:DefOfS_r}
S(r,N) = \frac{81r(9N^2-3N-9r^2-2)}{(3r+3N+1)^3(3r+3N-2)^3}.
\end{equation}
Now inserting \eqref{E:EM-eval-A} and \eqref{E:DefOfS_r} into \eqref{E:EulerMacL1stOrder}, we see by \eqref{E:Phi_when_q=2/3} that
\begin{align}
\Phi\big(r,\tfrac{2}{3} \big) = \sum_{j=1}^\infty f_r(j) 
&= 1-\frac{4}{(3r-2)^2} + \frac{18r}{(3r-2)^3} + \sum_{N=0}^\infty S(r,N) \notag \\
&= 1 + \frac{6r-1}{(3r+1)^3} + \sum_{N=1}^\infty S(r,N). \label{E:AppQuant0}
\end{align}  
(Note that we have peeled off the $N=0$ term from the summation.)

It thus remains to show that
\begin{equation}\label{E:AppQuant1}
\frac{6r-1}{(3r+1)^3} + \sum_{N=1}^\infty S(r,N)
\end{equation}
is strictly positive for $r>\frac{2}{3}$.

\subsubsection{The function $S(r,x)$}\label{SSS:Analysis-on-S}

We now show that $x \mapsto S(r,x)$ ($x$ is for now regarded as a non-negative real variable) has a single local extrema (a maximum) on $[0,\infty)$.  
Indeed,
\begin{align*}
\frac{\dee S}{\dee x}(r,x) = \frac{243r[(-4 + 39 r - 36 r^2 + 
 162 r^3) + (18 + 18 r + 216 r^2) x + (54 - 54 r) x^2 - 108 x^3]}{(3r+3x+1)^4 (3r+3x-2)^4}
\end{align*}

Write the cubic polynomial in $x$ appearing in the brackets above by
\begin{equation}\label{E:def-of-p_r}
p_r(x) := (-4 + 39 r - 36 r^2 + 
 162 r^3) + (18 + 18 r + 216 r^2) x + (54 - 54 r) x^2 - 108 x^3.
\end{equation}

It is easily checked that for $r > \frac{2}{3}$ the coefficients of $1$ and $x$ are positive, while the coefficient of $x^2$ changes sign at $r=1$, and the coefficient of $x^3$ is a negative constant.
Thus for all $r$ in this range, there is exactly one sign change in consecutive coefficients when the monomial terms of $p_r(x)$ are considered in ascending order. 
Descartes' Rule of Signs thus guarantees a {\em unique} positive real root of the function $x \mapsto p_r(x)$, where the values of $p_r(x)$ change from positive to negative.
Denote this root by $Q_r$. 

We claim that $Q_r \in \big(\frac{3r}{2}+\frac{1}{6},\frac{3r}{2}+\frac{1}{3}\big)$: Indeed, for $r>\frac{2}{3}$,
\begin{align*}
&p_r\big(\tfrac{3r}{2}+\tfrac{1}{6}\big) = 81r  > 0, \\
&p_r\big(\tfrac{3r}{2}+\tfrac{1}{3}\big) = 4+66r-\tfrac{225}{2}r^2 < 0. 
\end{align*}
(A more precise estimate on $Q_r$ is obtained in Lemma \ref{L:Qr-bound-appendix}.)

We see from \eqref{E:DefOfS_r} that $x \mapsto S(r,x)$ starts negative, then becomes (and remains) positive for $x$ large enough, since the function tends to zero as $x \to \infty$ and its derivative changes signs exactly once. 
In particular, the global maximum value of $S(r,Q_r)$ is positive.

We now trivially re-write the quantity in \eqref{E:AppQuant1} as
\begin{equation}\label{E:AppQuant2}
\frac{6r-1}{(3r+1)^3} + S(r,1) + S(r,2) + \sum_{N=3}^\infty S(r,N).
\end{equation}
(Separating out the $N=1$ and $N=2$ terms from the rest of the summation simplifies the following estimate.)
We claim that
\begin{equation}\label{E:Lower-estimate1}
\sum_{N=3}^\infty S(r,N) > \int_2^\infty S(r,x)\,dx - S(r,Q_r).
\end{equation}
To see this, set $K := \lfloor Q_r \rfloor$, the greatest integer less than or equal to $Q_r$. 
From the discussion above, $S(r,\cdot)$ increases on $(1,K)$ and decreases on $(K+1,\infty)$.  
We now consider two cases: $K\le2$ and $K\ge 3$.

When $K \le 2$, we have that $S(r,\cdot)$ decreases on $(3,\infty)$, so
\begin{subequations}
\begin{equation}\label{E:Lower-estimate2}
\sum_{N=3}^\infty S(r,N) > \int_3^\infty S(r,x)\,dx.
\end{equation}
But also note that
\begin{equation}\label{E:Lower-estimate3}
0 > \int_2^3 S(r,x)\,dx - S(r,Q_r),
\end{equation}
\end{subequations}
since any integral over an interval of unit length is overestimated by the maximum value of its integrand.
Combining \eqref{E:Lower-estimate2} with \eqref{E:Lower-estimate3} now yields \eqref{E:Lower-estimate1} for $K \le 2$.

When $K \ge 3$, we are able to write
\begin{subequations}\label{E:Sum-IntegralEstimates}
\begin{align}
\sum_{N=3}^{K} S(r,N) &> \int_2^K S(r,x)\,dx, \label{E:Sum-IntegralEstimates-a}\\
\sum_{N=K+1}^\infty S(r,N) &> \int_{K+1}^\infty S(r,x)\,dx. \label{E:Sum-IntegralEstimates-b}
\end{align}
Using reasoning identical to what justified \eqref{E:Lower-estimate3}, we see that
\begin{equation}\label{E:Sum-IntegralEstimates-c}
0 > \int_K^{K+1} S(r,x)\,dx - S(r,Q_r).
\end{equation}
\end{subequations}
Combining \eqref{E:Sum-IntegralEstimates-a}, \eqref{E:Sum-IntegralEstimates-b} and \eqref{E:Sum-IntegralEstimates-c} yields \eqref{E:Lower-estimate1} for $K \ge 3$.

From \eqref{E:Lower-estimate1}, the following quantity is a lower bound on \eqref{E:AppQuant1} = \eqref{E:AppQuant2}:
\begin{equation}\label{E:New-lower-estimate-a}
\frac{6r-1}{(3r+1)^3} + S(r,1) + S(r,2) + \int_2^\infty S(r,x)\,dx - S(r,Q_r).
\end{equation}
We now show \eqref{E:New-lower-estimate-a} is positive by computing the above integral and estimating $S(r,Q_r)$.

A partial fraction decomposition of $S(r,x)$ helps to yield the following:
\begin{equation}\label{E:integral-comp-logs}
\int_2^\infty S(r,x)\,dx = \frac{3r(108r^3+594r^2+1035r+616)}{2(3r+4)^2(3r+7)^2} - 2r\log\bigg(\frac{3r+7}{3r+4} \bigg).
\end{equation}

We also have the following estimate on $S(r,Q_r)$: for $r>\frac{2}{3}$,
\begin{equation}\label{E:estimate-on-S(r,Qr)}
S(r,Q_r) < \frac{16}{3125 r^3}.
\end{equation}
The proof this estimate is given in Lemma \ref{L:estmate-S(r,Qr)} of the Appendix.
(The right hand side of \eqref{E:estimate-on-S(r,Qr)} is the leading term in the Taylor expansion of $S(r,Q_r)$ at $\infty$; see Remark \ref{R:expansion-of-Q-at-infty}.)

We now use \eqref{E:integral-comp-logs} and \eqref{E:estimate-on-S(r,Qr)} to define a new function:
\begin{align}
H(r) &:=  \frac{6r-1}{(3r+1)^3} + S(r,1) + S(r,2) + \int_2^\infty S(r,x)\,dx - \frac{16}{3125 r^3} \notag \\
&= \frac{6r-1}{(3r+1)^3} + \frac{81r(4-9r^2)}{(3r+1)^3(3r+4)^3} + \frac{81r(28-9r^2)}{(3r+4)^3(3r+7)^3} + \label{E:DefOfH(r)} \\
& \qquad \qquad + \frac{3r(108r^3+594r^2+1035r+616)}{2(3r+4)^2(3r+7)^2} - 2r\log\bigg(\frac{3r+7}{3r+4}\bigg) - \frac{16}{3125 r^3} \notag
\end{align}

Upon combining \eqref{E:integral-comp-logs} with the bound \eqref{E:estimate-on-S(r,Qr)}, we see that $H(r)$ is a lower bound for \eqref{E:New-lower-estimate-a}.
Our goal is now to show $H(r)$ is positive for $r>\frac{2}{3}$.
Note that while $H$ is itself not a rational function, its second derivative is rational.
This is crucial to the coming argument.

\subsubsection{The function $H(r)$}\label{SSS:The-function-H}

We now present $H$ in a more digestible fashion.  
Combining all of the rational functions in \eqref{E:DefOfH(r)} yields
\begin{equation}\label{E:def-of-H}
H(r) = \frac{F_1(r)}{6250r^3(3r+1)^3(3r+4)^3(3r+7)^3} - 2r\log\bigg(\frac{3r+7}{3r+4} \bigg),
\end{equation}
where $F_1$ is a polynomial of degree 12 with integer coefficients, the full formula of which is included in Table \ref{table:Fj-coeffs} below.
The leading coefficient of $F_1$ (written $c_{1,12}$ in the table) is $246\,037\,500 = 2^2 3^9 5^5$.
Note that the denominator of the rational function in \eqref{E:def-of-H} is also a polynomial of degree 12, with leading coefficient $123\,018\,750 = 2^1 3^9 5^5$.
Thus, 
\begin{equation*}
\lim_{r \to \infty} H(r) = 2-2 = 0.
\end{equation*}

Now differentiate $H$ to obtain
\begin{equation}\label{E:DefOfH'}
H'(r) = \frac{F_2(r)}{3125r^4(3r+1)^4(3r+4)^4(3r+7)^4} - 2\log\bigg(\frac{3r+7}{3r+4} \bigg),
\end{equation}
where $F_2$ is a polynomial of degree 15 with integer coefficients (the full formula is also included in Table \ref{table:Fj-coeffs}).  Since the denominator polynomial has degree 16, it follows that 
\begin{equation*}
\lim_{r \to \infty} H'(r) = 0.
\end{equation*}

Finally, differentiate $H'$ to obtain
\begin{equation}\label{E:DefOfH''}
H''(r) = \frac{F_3(r)}{3125r^5(3r+1)^5(3r+4)^5(3r+7)^5},
\end{equation}
where $F_3$ is a polynomial of degree 16 with integer coefficients, the exact expression of which is included in Table \ref{table:Fj-coeffs}. 
Since the denominator polynomial has degree 20, it follows that
\begin{equation*}
\lim_{r \to \infty} H''(r) = 0.
\end{equation*}

The following statements are easily verified from Table \ref{table:Fj-coeffs} below:
\begin{enumerate}

\item The coefficients of the $1,r,\cdots,r^{7}$ terms in $F_3$ are negative.

\item The coefficients of the $r^8, r^9, \cdots, r^{16}$ terms in $F_3$ are positive.

\item Descartes' Rule of Signs thus guarantees that $F_3$ has a unique positive real root.  
Since the denominator of $H''(r)$ is positive for $r>0$, we see that $H''(r)$ changes sign exactly once for $r>0$.

\item Evaluation of the rational function $H''$ shows this sign change occurs inside the interval $\big(\frac{1}{3},\frac{2}{3}\big)$:
\begin{equation*}
H''\left(\tfrac{1}{3}\right) = -\frac{437\,616\,243}{25\,600\,000}, \qquad \mathrm{and} \qquad H''\left(\tfrac{2}{3}\right) = \frac{49\,618}{2\,278\,125}.
\end{equation*}

\item We conclude that $H''>0$ on the interval $\big(\frac{2}{3},\infty\big)$.

\item  Since $H''>0$ on $\big(\frac{2}{3},\infty\big)$, $H'$ strictly increases on this interval.  
Since $\lim\limits_{r \to \infty} H'(r) = 0$, we conclude that $H'<0$ on $\big(\frac{2}{3},\infty\big)$.

\item Since $H'<0$ on $\big(\frac{2}{3},\infty\big)$, $H$ strictly decreases on this interval.  Since $\lim\limits_{r \to \infty} H(r) = 0$, we conclude that $H>0$ on $\big(\frac{2}{3},\infty\big)$.
\end{enumerate}
Since $H(r)$ is positive for $r > \frac{2}{3}$, the expression in \eqref{E:New-lower-estimate-a} is positive for this range of $r$. 
This in turn shows that \eqref{E:AppQuant2} = \eqref{E:AppQuant1} is positive for the same range of $r$.
Thus by \eqref{E:AppQuant0},
\begin{align*}
\Phi\big(r,\tfrac{2}{3} \big) = \sum_{j=1}^\infty f_r(j) > 1 + H(r) > 1
\end{align*}
for $r>\tfrac{2}{3}$, completing the proof.
\end{proof} 


\begin{table}[ht]
\centering
\begin{tabular}{ |p{.4cm}||p{2.6 cm}|p{3.0 cm}|p{3.6 cm}|  }
\hline
\hfill  $n$ & \hfill $c_{1,n}$ & \hfill $c_{2,n}$ & \hfill $c_{3,n}$ \\
\hline
\hline
\hfill $0$ & \hfill $-702\,464$ & \hfill $29\,503\,488$ & \hfill $-3\,304\,390\,656$ \\
\hline
\hfill $1$ & \hfill $-8\,805\,888$ & \hfill $493\,129\,728$ & \hfill $-69\,038\,161\,920$  \\
\hline
\hfill $2$ & \hfill $-44\,924\,544$ & \hfill $3\,546\,063\,360$ & \hfill $-640\,689\,315\,840$  \\
\hline
\hfill $3$ & \hfill $-258\,414\,880$ & \hfill $14\,430\,286\,080$ & \hfill $-3\,491\,968\,112\,640$  \\
\hline
\hfill $4$ & \hfill $1\,018\,286\,832$ & \hfill $77\,896\,979\,088$ & \hfill $-12\,471\,183\,325\,440$  \\
\hline
\hfill $5$ & \hfill $4\,962\,569\,148$ & \hfill $110\,838\,411\,360$ & \hfill $-48\,684\,386\,314\,944$  \\
\hline
\hfill $6$ & \hfill $11\,832\,384\,015$ & \hfill $-17\,706\,703\,248$ & \hfill $-111\,582\,268\,515\,360$  \\
\hline
\hfill $7$ & \hfill $23\,240\,472\,534$ & \hfill $244\,982\,773\,080$ & \hfill $-78\,421\,336\,513\,920$  \\
\hline
\hfill $8$ & \hfill $29\,834\,360\,478$ & \hfill  $1\,512\,143\,688\,033$ & \hfill $148\,629\,164\,640\,120$  \\
\hline
\hfill $9$ & \hfill $23\,154\,232\,644$ & \hfill $2\,940\,847\,647\,885$ & \hfill $378\,180\,897\,173\,910$  \\
\hline
\hfill $10$ & \hfill $10\,449\,759\,375$ & \hfill $3\,231\,415\,617\,165$ & \hfill $377\,142\,473\,066\,319$  \\
\hline
\hfill $11$ & \hfill $2\,501\,381\,250$ & \hfill $2\,264\,445\,221\,688$ & \hfill $224\,889\,469\,312\,590$  \\
\hline
\hfill $12$ & \hfill $246\,037\,500$ & \hfill $1\,025\,079\,243\,543$ & \hfill $92\,232\,089\,533\,215$  \\
\hline
\hfill $13$ & \hfill $0$ & \hfill $290\,262\,740\,625$ & \hfill $25\,224\,576\,030\,090$  \\
\hline
\hfill $14$ & \hfill $0$ & \hfill $47\,054\,671\,875$ & \hfill $3\,414\,213\,475\,245$  \\
\hline
\hfill $15$ & \hfill $0$ & \hfill $3\,321\,506\,250$ & \hfill $66\,996\,641\,106$  \\
\hline
\hfill $16$ & \hfill $0$ & \hfill $0$ & \hfill $14\,946\,778\,125$  \\
\hline
\end{tabular}
\caption{Exact values of coefficients of polynomials $F_j(r) = \sum c_{j,n} r^n$ in Section \ref{SSS:The-function-H}.}
\label{table:Fj-coeffs}
\end{table}


\subsection{Consequences for Leray transform}\label{SS:proof-of-preferred-Leray-norm}
We now prove that for $\gamma \neq 2$, the $L^2(M_\gamma,\nu)$ norm of $\bm{L}_k$ is strictly increasing in $k$.

\begin{theorem}\label{T:preferred-symbol-mono-increase}
Let $\gamma \neq2$.
Then $k \mapsto C_\nu(\gamma,k)$ is a strictly increasing function on the non-negative integers.
Thus,
\begin{equation*}
\norm{\bm{L}_k}_{L^2(M_\gamma,\nu)} < \norm{\bm{L}_{k+1}}_{L^2(M_\gamma,\nu)}.
\end{equation*}
\end{theorem}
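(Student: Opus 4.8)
The plan is to deduce the theorem directly from the polygamma inequality of Theorem~\ref{T:EdhShe-series} by feeding it into the symbol-ratio machinery of Lemma~\ref{L:Ratio-into-series}. The first step is to recognize the preferred symbol function as an instance of the reparameterized symbol: since $\delta_a(\gamma) = a(\gamma-2)+1$, taking $a = \tfrac13$ gives $\delta_{1/3}(\gamma) = \tfrac{\gamma-2}{3} + 1 = \tfrac{\gamma+1}{3}$, so that $C_\nu(\gamma,k) = J\!\big(\tfrac{\gamma+1}{3},\gamma,k\big) = J(\delta_{1/3}(\gamma),\gamma,k)$. In the $(q = 1-a)$ bookkeeping of Lemma~\ref{L:Ratio-into-series} this is precisely the case $q = \tfrac23$.

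Next I would check the standing hypothesis of Lemma~\ref{L:Ratio-into-series}, namely that $J(\delta_{1/3}(\gamma),\gamma,k)$ is finite for every non-negative integer $k$. By Lemma~\ref{L:Control-on-q-for-boundedness} this is equivalent to $\tfrac23 < \tfrac{\gamma}{|\gamma-2|}$: when $\gamma > 2$ this reads $3\gamma > 2(\gamma-2)$, which is automatic, and when $1 < \gamma < 2$ it reads $3\gamma > 2(2-\gamma)$, i.e.\ $\gamma > \tfrac45$, which holds because $\gamma > 1$. (Equivalently, $\tfrac{\gamma+1}{3} \in \ci_0(\gamma) = (-1,2\gamma-1)$.) Hence the hypotheses of Lemma~\ref{L:Ratio-into-series} are satisfied for all $\gamma \neq 2$.

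Now I would invoke Theorem~\ref{T:EdhShe-series}, which states that $\Phi(r,\tfrac23) > 1$ for all $r > \tfrac23$. Case (2) of Lemma~\ref{L:Ratio-into-series} then yields at once that $k \mapsto J(\delta_{1/3}(\gamma),\gamma,k) = C_\nu(\gamma,k)$ is strictly increasing on the non-negative integers. Finally, since $\norm{\bm{L}_k}_{L^2(M_\gamma,\nu)} = \sqrt{C_\nu(\gamma,k)}$ by \eqref{E:norm_Lk-sqrt} applied with $d = \tfrac{\gamma+1}{3}$, and the square root is monotone, we conclude $\norm{\bm{L}_k}_{L^2(M_\gamma,\nu)} < \norm{\bm{L}_{k+1}}_{L^2(M_\gamma,\nu)}$ for every $k$. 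I do not expect any genuine obstacle here: the entire analytic burden has already been discharged in Theorem~\ref{T:EdhShe-series} (the Euler--Maclaurin plus Descartes estimate) and in Lemma~\ref{L:Ratio-into-series} (which converts statements about the ratio $J(\cdots,k+1)/J(\cdots,k)$ into statements about $\Phi$); what remains is only the elementary algebra matching $a = \tfrac13$ with $q = \tfrac23$ and confirming finiteness.
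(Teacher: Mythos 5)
Your proposal is correct and follows essentially the same route as the paper: invoke Theorem~\ref{T:EdhShe-series} to get $\Phi(r,\tfrac23)>1$ for $r>\tfrac23$, apply case (2) of Lemma~\ref{L:Ratio-into-series} with $q=\tfrac23$ (i.e.\ $a=\tfrac13$), and convert to operator norms via \eqref{E:norm_Lk-sqrt}. Your explicit verification of the finiteness hypothesis through Lemma~\ref{L:Control-on-q-for-boundedness} is a detail the paper leaves implicit, but it changes nothing substantive.
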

\begin{proof}
We have just seen in Theorem \ref{T:EdhShe-series} that $\Phi\big(r,\tfrac{2}{3} \big) > 1$ for $r>\frac{2}{3}$. 
Lemma \ref{L:Ratio-into-series} now applies, saying the preferred symbol function $k \mapsto C_\nu(\gamma,k)$ is strictly increasing on the non-negative integers.
Thus, recalling \eqref{E:def-of-preferred-symbol} and \eqref{E:norm_Lk-sqrt}, we have
\begin{equation}\label{E:proof-of-mono-inc-L}
\norm{\bm{L}_k}_{L^2(M_\gamma,\nu)} = \sqrt{C_\nu(\gamma,k)} < \sqrt{C_\nu(\gamma,k+1)} = \norm{\bm{L}_{k+1}}_{L^2(M_\gamma,\nu)},
\end{equation}
completing the proof.
\end{proof}

We now easily deduce the $L^2(M_\gamma,\nu)$ norm of the full Leray transform.

\begin{theorem}\label{T:Leray=preferred-norm}
The norm of the Leray transform on $L^2(M_\gamma,\nu)$ is given by
\begin{equation*}
\norm{\bm{L}}_{L^2(M_\gamma,\nu)} = \sqrt{\frac{\gamma}{2\sqrt{\gamma-1}}}.
\end{equation*}
\end{theorem}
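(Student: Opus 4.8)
The plan is to combine the monotonicity statement just established with the known stabilization of the sub-Leray norms. By \eqref{E:Leray-norm-as-sup},
\[
\norm{\bm{L}}_{L^2(M_\gamma,\nu)} = \sup\left\{ \sqrt{C_\nu(\gamma,k)} : k = 0,1,2,\dots \right\},
\]
where $C_\nu(\gamma,k) = J\big(\tfrac{\gamma+1}{3},\gamma,k\big)$ by \eqref{E:def-of-preferred-symbol} and $\sqrt{C_\nu(\gamma,k)} = \norm{\bm{L}_k}_{L^2(M_\gamma,\nu)}$ by \eqref{E:norm_Lk-sqrt}. For $\gamma \neq 2$, Theorem \ref{T:preferred-symbol-mono-increase} tells us this sequence is strictly increasing in $k$, so its supremum is attained only in the limit:
\[
\norm{\bm{L}}_{L^2(M_\gamma,\nu)} = \lim_{k\to\infty} \sqrt{C_\nu(\gamma,k)} = \lim_{k\to\infty} \norm{\bm{L}_k}_{L^2(M_\gamma,\nu)}.
\]

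Next I would evaluate this limit using Proposition \ref{P:HF-norm}, which asserts that for every $d \in \R$ (in particular $d = \tfrac{\gamma+1}{3}$) one has $\lim_{k\to\infty}\norm{\bm{L}_k}_{L^2(M_\gamma,\mu_d)} = \sqrt{\tfrac{\gamma}{2\sqrt{\gamma-1}}}$. This immediately yields the claimed formula for all $\gamma \neq 2$.

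Finally, the case $\gamma = 2$ must be dispatched separately, since Theorem \ref{T:preferred-symbol-mono-increase} excludes it. Here $\nu = \mu_1$ is ordinary Lebesgue measure on the parameter space $\R^3$ and the Leray transform coincides with the orthogonal Szeg\H{o} projection on $L^2(M_2,\mu_1)$, so its norm is $1$; this agrees with the formula, since $\sqrt{\tfrac{2}{2\sqrt{2-1}}} = 1$. (Alternatively, \eqref{E:symbol-function_gamma=2} gives $J(1,2,k) = 1$ for all $k$ directly.)

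I do not expect a genuine obstacle here: all of the difficulty has already been absorbed into Theorem \ref{T:EdhShe-series} (the inequality $\Phi(r,\tfrac23) > 1$ for $r>\tfrac23$) and its operator-theoretic consequence Theorem \ref{T:preferred-symbol-mono-increase}. The only subtlety worth spelling out is that strict monotonicity of $k \mapsto C_\nu(\gamma,k)$ forces the supremum in \eqref{E:Leray-norm-as-sup} to equal the limit, so that $\sqrt{\tfrac{\gamma}{2\sqrt{\gamma-1}}}$ is not merely an upper bound but is the exact operator norm; combined with handling the $\gamma=2$ boundary case by hand, this completes the proof.
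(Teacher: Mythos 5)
Your proof is correct and follows essentially the same route as the paper: strict monotonicity from Theorem \ref{T:preferred-symbol-mono-increase} turns the supremum in \eqref{E:Leray-norm-as-sup} into the limit, which Proposition \ref{P:HF-norm} evaluates. Your explicit handling of the $\gamma=2$ case is a small extra care the paper's proof leaves implicit, but it does not change the argument.
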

\begin{proof}[Proof of Theorem \ref{T:Leray=preferred-norm}]
The strictly increasing behavior seen in \eqref{E:proof-of-mono-inc-L} combines with \eqref{E:Leray-norm-as-sup} and Proposition \ref{P:HF-norm} to give
\[
\norm{\bm{L}}_{L^2(M_\gamma,\nu)} = \lim_{k\to\infty} \norm{\bm{L}_k} = \sqrt{\frac{\gamma}{2\sqrt{\gamma-1}}},
\]
completing the proof.
\end{proof}

\appendix

\section{Proof of the estimate on $S(r,Q_r)$}\label{A:Appendex-UpperBound}

In this appendix we prove \eqref{E:estimate-on-S(r,Qr)}, the crucial upper bound on $S(r,Q_r)$ that was used in our proof of Theorem \ref{T:EdhShe-series}.
Recall the definition of the rational function 
\begin{equation*}
S(r,x) = \frac{81r(9x^2-3x-9r^2-2)}{(3r+3x+1)^3(3r+3x-2)^3}.
\end{equation*}
It was shown in Section \ref{SSS:Analysis-on-S} that for $r>\frac{2}{3}$, $x \mapsto S(r,x)$ has a single local extrema (a maximum) at $x=Q_r$, where $Q_r$ is the unique positive root of the polynomial
\begin{equation}\label{E:appendix-p_r}
p_r(x) = (-4+39r-36r^2+162r^3) + (18 + 18 r + 216 r^2) x + (54 - 54 r) x^2 - 108 x^3.
\end{equation}
It was previously demonstrated that $Q_r \in \big(\tfrac{3r}{2}+\tfrac{1}{6},\tfrac{3r}{2}+\tfrac{1}{3}\big)$, but here sharper precision is needed; see Remark \ref{R:expansion-of-Q-at-infty}.

\subsection{A sharper estimate on $Q_r$}
Let us define
\begin{subequations}\label{E:new-upper-and-lower-bounds-Qr}
\begin{align}
m(r) &= \frac{3r}{2}+\frac{1}{6} + \frac{2}{25r} -\frac{21}{3125r^3} \label{E:def-of-lower-bound-m} \\
M(r) &= \frac{3r}{2}+\frac{1}{6} + \frac{2}{25r}. \label{E:def-of-upper-bound-M}
\end{align}
\end{subequations}
(These functions are truncated Taylor expansions of $Q_r$ at $\infty$; see Remark \ref{R:expansion-of-Q-at-infty}.)

\begin{lemma}\label{L:Qr-bound-appendix}
Let $m(r)$ and $M(r)$ be as above. The following inequality holds
\begin{equation*}
m(r) < Q_r < M(r).
\end{equation*}
\end{lemma}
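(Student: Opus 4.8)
The plan is to use the structural fact, already established in Section~\ref{SSS:Analysis-on-S}, that $x\mapsto p_r(x)$ has a unique positive root $Q_r$ at which the polynomial changes sign from positive to negative. Because $p_r$ has negative leading coefficient, this forces $p_r(x)>0$ for $x\in[0,Q_r)$ and $p_r(x)<0$ for $x\in(Q_r,\infty)$. Since one checks at once that $0<m(r)<M(r)$ for $r>\tfrac23$ (indeed $M(r)-m(r)=\tfrac{21}{3125 r^3}>0$, and the first two terms of $m(r)$ already exceed $1$ while its tail $\tfrac1r(\tfrac2{25}-\tfrac{21}{3125 r^2})$ is positive on this range), the two inequalities $m(r)<Q_r$ and $Q_r<M(r)$ are equivalent, respectively, to $p_r\big(m(r)\big)>0$ and $p_r\big(M(r)\big)<0$ for all $r>\tfrac23$. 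These are the two statements I would verify.

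For the upper bound, write $M(r)=\dfrac{225r^2+25r+12}{150r}$, substitute into \eqref{E:appendix-p_r}, and clear denominators: one gets $(150r)^3\,p_r\big(M(r)\big)=-\Pi_M(r)$ for an explicit polynomial $\Pi_M$ of degree $6$ with integer coefficients, so it suffices to show $\Pi_M(r)>0$ for $r>\tfrac23$. I would do this by substituting $r=\tfrac23+t$ and checking, via Descartes' Rule of Signs (Proposition~\ref{P:Descartes}), that $\Pi_M(\tfrac23+t)$ has no positive root — ideally all its coefficients turn out nonnegative — after which a single evaluation (say at $t=0$) pins down the sign. The lower bound runs in parallel: with $m(r)=\dfrac{28125r^4+3125r^3+1500r^2-126}{18750r^3}$, substitution into \eqref{E:appendix-p_r} and clearing denominators gives $(18750r^3)^3\,p_r\big(m(r)\big)=\Pi_m(r)$ for an explicit integer polynomial $\Pi_m$ of degree $12$, and it remains to prove $\Pi_m(r)>0$ for $r>\tfrac23$, again via the shift $r=\tfrac23+t$ and Descartes (or by exhibiting a manifestly positive sum-of-products decomposition). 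A lighter bookkeeping variant is to expand $p_r$ about $x_0=\tfrac{3r}{2}+\tfrac16$, where $p_r(x_0)=81r$ and $p_r'''\equiv-648$: writing $m(r)=x_0+\varepsilon_-(r)$, $M(r)=x_0+\varepsilon_+(r)$ with $\varepsilon_\pm$ explicit Laurent polynomials in $r$, one has $p_r(x_0+\varepsilon_\pm)=81r+p_r'(x_0)\varepsilon_\pm+\tfrac12 p_r''(x_0)\varepsilon_\pm^2-108\,\varepsilon_\pm^3$, reducing to the same polynomial inequalities with smaller constants.

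The main obstacle is computational rather than conceptual, and it has a genuine delicacy: since $m(r)$ and $M(r)$ are truncations of the Taylor expansion of $Q_r$ at $\infty$ (see Remark~\ref{R:expansion-of-Q-at-infty}), both inequalities are asymptotically sharp as $r\to\infty$, at rates $r^{-5}$ and $r^{-3}$ respectively. Hence the sign-definiteness of $\Pi_m$ and $\Pi_M$ truly relies on the constraint $r>\tfrac23$ and not merely $r>0$, so a naive application of Descartes in the variable $r$ will fail and the shift to $t=r-\tfrac23$ (or an equivalent localization of roots) is essential; the degree-$12$ polynomial $\Pi_m$ also carries large integer coefficients. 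For this reason I would carry out the expansions and the sign checks with computer algebra rather than by hand; cf. \cite{Edholm_Leray_nb}.
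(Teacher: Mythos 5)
Your overall strategy coincides with the paper's: since $p_r$ has a unique positive root, at which it passes from positive to negative values, the lemma reduces to the two sign checks $p_r(m(r))>0$ and $p_r(M(r))<0$, and the paper's proof consists of recording the explicit rational functions $p_r(m(r))$ and $p_r(M(r))$ and reading off their signs (no shift to $t=r-\tfrac23$ is needed there: the recorded $p_r(M(r))$ is negative for all $r>0$, and the only non-obvious factor of $p_r(m(r))$ is $1515625r^4+21000r^2-5292$, which has a single coefficient sign change and is positive at $r=\tfrac23$).

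There is, however, a step in your plan that would fail as written, and it traces to an inconsistency you half-notice but do not resolve. You take $M(r)=\tfrac{3r}{2}+\tfrac16+\tfrac{2}{25r}=\tfrac{225r^2+25r+12}{150r}$ from \eqref{E:def-of-upper-bound-M}, while also asserting that $m,M$ are truncations of the expansion $Q_r=\tfrac{3r}{2}+\tfrac16+\tfrac{3}{25r}-\tfrac{21}{3125r^3}+\cdots$ of Remark \ref{R:expansion-of-Q-at-infty}; the coefficients $\tfrac{2}{25}$ and $\tfrac{3}{25}$ cannot both be right, and it is the $\tfrac{2}{25}$ that is wrong. Expanding $p_r$ about $x_0=\tfrac{3r}{2}+\tfrac16$, where $p_r(x_0)=81r$, $p_r'(x_0)=27-675r^2$, $p_r''(x_0)=-1080r$, gives for $\varepsilon=\tfrac{2}{25r}$
\[
p_r\big(x_0+\tfrac{2}{25r}\big)=27r-\frac{162}{125r}-\frac{864}{15625r^3}>0 \qquad (r>\tfrac23),
\]
so with the printed $M$ one has $M(r)<Q_r$ (numerically $Q_1\approx 1.781$ while $M(1)\approx 1.7467$): your $\Pi_M$ is negative, not positive, on $r>\tfrac23$, and the upper bound you set out to verify is false for that $M$. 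The coefficient in \eqref{E:new-upper-and-lower-bounds-Qr} is a typo for $\tfrac{3}{25r}$: taking $\varepsilon=\tfrac{3}{25r}$ in the same expansion yields $p_r(x_0+\varepsilon)=-\tfrac{81(875r^2+36)}{5^6r^3}<0$, which is exactly the value the paper's proof records, and the paper's formula for $p_r(m(r))$ likewise corresponds to $m(r)=\tfrac{3r}{2}+\tfrac16+\tfrac{3}{25r}-\tfrac{21}{3125r^3}$. Once the two bounds are corrected, your reduction goes through — indeed more easily than you anticipate, since $(150r)^3p_r(M(r))=-17496(875r^2+36)$ is manifestly negative with no Descartes argument or shift required, and for $p_r(m(r))$ a single sign change plus one evaluation at $r=\tfrac23$ suffices.
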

\begin{proof}
This can be seen from direct evaluation.
Indeed, 
\begin{equation*}
p_r(m(r)) = \frac{81 (12348 + 
125 r^2 (1515625 r^4 + 21000 r^2 -5292))}{5^{15}\, r^9},
\end{equation*}
and it is easily checked that $1515625 r^4 + 21000 r^2 -5292 > 0$ for $r>\frac{2}{3}$.
Since all other terms in the formula are clearly positive, we conclude $p_r(m(r)) > 0$.

Now calculate
\begin{equation*}
p_r(M(r)) = -\frac{81 \left(36 + 875 r^2 \right)}{5^6\, r^3},
\end{equation*}
which is clearly negative for $r>\frac{2}{3}$.
Since $Q_r$ is the unique positive root of $p_r$, we conclude it must lie in the interval $(m(r),M(r))$.
\end{proof}

With these improved bounds on $Q_r$ we are ready to prove the desired estimate:

\begin{lemma}\label{L:estmate-S(r,Qr)}
The following estimate holds for $r>\frac{2}{3}$ 
\begin{equation}\label{E:estmate-S(r,Qr)}
S(r,Q_r) < \frac{16}{3125 r^3}.
\end{equation} 
\end{lemma}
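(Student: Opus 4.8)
The plan is to combine the two-sided bound $m(r) < Q_r < M(r)$ of Lemma~\ref{L:Qr-bound-appendix} with \emph{separate} monotonicity of the numerator and denominator of $S(r,x)$ over the short interval $[m(r),M(r)]$. Write $S(r,x) = N(r,x)/D(r,x)$, where $N(r,x) = 81r(9x^2-3x-9r^2-2)$ and $D(r,x) = (3r+3x+1)^3(3r+3x-2)^3$. First I would check that for $r>\tfrac{2}{3}$ both $N(r,\cdot)$ and $D(r,\cdot)$ are strictly increasing and strictly positive on $[m(r),M(r)]$: one has $\partial_x N(r,x) = 243r(6x-1)$, which is positive because $m(r) > \tfrac{3r}{2}+\tfrac{1}{6} > \tfrac{1}{6}$ (the correction terms in \eqref{E:def-of-lower-bound-m} being positive for $r>\tfrac{2}{3}$, since $250r^2-21>0$ there); and writing $u = 3(r+x) > 3(r+m(r)) > \tfrac{15r}{2} > 5$ gives $D = (u+1)^3(u-2)^3$ with $\tfrac{d}{du}D = 3(u+1)^2(u-2)^2(2u-1) > 0$ and $D > 0$. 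Since, by the analysis in Section~\ref{SSS:Analysis-on-S}, $S(r,\cdot)$ attains a \emph{positive} global maximum at $x = Q_r$, and $D(r,Q_r) > 0$, we deduce $N(r,Q_r) > 0$. Combining these facts with $m(r) < Q_r < M(r)$ yields
\begin{equation*}
0 < S(r,Q_r) = \frac{N(r,Q_r)}{D(r,Q_r)} < \frac{N(r,M(r))}{D(r,m(r))}.
\end{equation*}

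It therefore suffices to establish the inequality $3125\, r^3\, N(r,M(r)) < 16\, D(r,m(r))$ for $r>\tfrac{2}{3}$. Since $m(r)$ and $M(r)$ are explicit rational functions of $r$, substituting them and clearing denominators (by a positive power of $r$) converts this into the positivity of a single polynomial $P(r)$, with rational coefficients, on $(\tfrac{2}{3},\infty)$. I would then conclude exactly as in the proof of Theorem~\ref{T:EdhShe-series}: read off the signs of the coefficients of $P$, apply Descartes' Rule of Signs (Proposition~\ref{P:Descartes}) to bound the number of its positive real roots, and settle the matter by evaluating $P$ at $r=\tfrac{2}{3}$ together with the sign of its leading coefficient. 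As elsewhere in this part of the paper, the polynomial bookkeeping is lengthy and is best verified with the accompanying notebook~\cite{Edholm_Leray_nb}.

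The delicate point — and the reason the refined bounds $m(r),M(r)$ of \eqref{E:new-upper-and-lower-bounds-Qr} are needed rather than the cruder enclosure $Q_r \in \big(\tfrac{3r}{2}+\tfrac{1}{6},\tfrac{3r}{2}+\tfrac{1}{3}\big)$ from Section~\ref{SSS:Analysis-on-S} — is that the target inequality is tight as $r\to\infty$: because $\tfrac{16}{3125r^3}$ is the leading term of the expansion of $S(r,Q_r)$ at infinity, the top-degree terms of $3125r^3N(r,M(r))$ and $16D(r,m(r))$ coincide, and in fact one checks that the crude enclosure would give a difference with a \emph{negative} leading term, so that the corresponding inequality fails for large $r$. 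The correction terms $\tfrac{2}{25r}$ and $-\tfrac{21}{3125r^3}$ in $M(r)$ and $m(r)$ are calibrated precisely so that enough subleading terms cancel for $P$ to remain positive on $(\tfrac{2}{3},\infty)$; confirming that this truncation order of the expansion of $Q_r$ is high enough is the crux of the argument, with everything else being routine verification.
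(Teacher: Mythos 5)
Your reduction is valid and takes a genuinely different route from the paper's. The paper works with $W(r,Q)$ from \eqref{E:def-W-function}, splits its monomial expansion in $(r,Q)$ into a positive-coefficient part $U$ and a negative-coefficient part $V$ (so that monotonicity in $Q$ is automatic), and lower-bounds $W(r,Q_r)$ by $U(r,m(r)) - V(r,M(r))$. You instead bound the numerator and denominator of $S$ separately; this requires the monotonicity of $N(r,\cdot)$ and $D(r,\cdot)$ on $[m(r),M(r)]$ and the positivity of $N(r,Q_r)$, all of which you verify correctly, and it leads to the cleaner target $3125\,r^3 N(r,M(r)) < 16\,D(r,m(r))$. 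Your diagnosis of why the crude enclosure $Q_r \in \big(\tfrac{3r}{2}+\tfrac16,\tfrac{3r}{2}+\tfrac13\big)$ cannot work — an uncancelled term one degree below the leading cancellation — is correct and is precisely the point of Lemma \ref{L:Qr-bound-appendix} and Remark \ref{R:expansion-of-Q-at-infty}. Both routes terminate in the positivity of an explicit high-degree polynomial on $(\tfrac23,\infty)$ whose leading coefficients nearly cancel; the resulting polynomials are different but of comparable complexity, and your target inequality does appear to be true (its leading surviving term is positive and it checks out numerically at sample points).

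The gap is in the endgame. You propose to finish ``exactly as in the proof of Theorem \ref{T:EdhShe-series}'': read off the coefficient signs of $P$, apply Descartes (Proposition \ref{P:Descartes}), and evaluate at $r=\tfrac23$ together with the leading coefficient. That template only closes the argument when the polynomial has at most one coefficient sign change, as $F_3$ happens to in Section \ref{SSS:The-function-H}. There is no reason to expect this for the polynomial arising here, and the paper's own polynomial at the exactly analogous stage has \emph{six} sign changes (Table \ref{table:signs-of-coeffs-of-P}), so Descartes yields only ``$0$, $2$, $4$ or $6$ positive roots'' and a single evaluation settles nothing. To get around this the paper must differentiate fourteen times until exactly one sign change remains, locate the unique positive root of $P^{(14)}$ in $(0,\tfrac23)$ by evaluating at two points, and then propagate positivity back down through $P^{(13)},\dots,P$ by evaluating each derivative at $r=\tfrac23$. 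Your polynomial will almost certainly require the same cascade (or some other certificate of positivity, e.g.\ a Sturm sequence), and until that is actually carried out the decisive step of the proof is missing. The route is salvageable, but as written it stops exactly where the real work begins.
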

\begin{proof}
First define the following two variable polynomial
\begin{equation}\label{E:def-W-function}
W(r,Q) = 2^4(3r+3Q+1)^3(3r+3Q-2)^3 - 3^4 5^5 r^4 (9Q^2-3Q-9r^2-2).
\end{equation}
This function is closely tied to the inequality in \eqref{E:estmate-S(r,Qr)}.
Indeed,
\begin{equation*}
W(r,Q_r) > 0 \qquad \Longleftrightarrow \qquad S(r,Q_r) < \frac{16}{3125 r^3}.
\end{equation*}

Now expand \eqref{E:def-W-function} and collect like-terms to express $W$ as
\begin{equation*}
W(r,Q) = \sum\nolimits_{j,k}  a_{j,k}\, r^j Q^k.
\end{equation*}
Let $\ch$ be the Heaviside step function (the indicator function of the positive real numbers) and define two closely related polynomials with {\em positive} integer coefficients:
\begin{subequations}
\begin{align}
U(r,Q) &= \sum\nolimits_{j,k}  \ch(a_{j,k})\, a_{j,k} \, r^j Q^k \label{E:def-of-U}\\
V(r,Q) &= -\sum\nolimits_{j,k} \ch(-a_{j,k})\, a_{j,k} \, r^j Q^k. \label{E:def-of-V}
\end{align}
\end{subequations}
Clearly, we have that
\begin{equation*}
W(r,Q) = U(r,Q) - V(r,Q).
\end{equation*}

Since both $U$ and $V$ have only positive coefficients in their expansions, Lemma \ref{L:Qr-bound-appendix} implies
\begin{align*}
U(r,Q_r) > U(r,m(r)), \qquad \qquad V(r,M(r)) > V(r,Q_r).
\end{align*}
Our goal will be to prove that
\begin{equation*}
U(r,m(r)) > V(r,M(r)),
\end{equation*}
which will imply $W(r,Q_r)>0$ and thereby give the result.

\subsection{Analysis of the polynomials $U(r,Q)$ and $V(r,Q)$}

The polynomials $U$ and $V$ are obtained from $W$ by expanding \eqref{E:def-W-function} and separating the monomial terms by the signs of their coefficients. 
After collecting terms in this way, we may re-write \eqref{E:def-of-U} and \eqref{E:def-of-V} as polynomials in the $Q$ variable:
\begin{equation}\label{E:U-and-V-expanded-in-Q}
U(r,Q) = \sum_{k=0}^6 u_k(r) Q^k, \qquad \qquad V(r,Q) = \sum_{k=0}^5 v_k(r) Q^k,
\end{equation}
where the coefficient functions $u_k(r)$ and $v_k(r)$ are given in Table \ref{table:Polynomial-coeffs-U-and-V}.

\begin{table}[ht]
\centering
\renewcommand{\arraystretch}{1.2}
\begin{tabular}{ |p{.4 cm}||p{6.9 cm}|p{5.7 cm}|  }
\hline
\centering{$k$}  & \hfill $u_k(r)$ & \hfill $v_k(r)$  \\
\hline
\hline
\centering{$0$} & \hfill $2289789r^6 +502362r^4 +4752r^3 +864r^2$ & \hfill $11664r^5 +576r +128$  \\
\hline
\centering{$1$} & \hfill $69984r^5 +701055r^4 +14256r^2 +1728r$ & \hfill $15552r^3 + 576$   \\
\hline
\centering{$2$} & \hfill $14256r +864$ & \hfill $2103165r^4 +116640r^3  +23328r^2 $   \\
\hline
\centering{$3$} & \hfill $233280r^3 +4752$ & \hfill $116640r^2 +15552r$   \\
\hline
\centering{$4$} & \hfill $174960 r^2$ & \hfill $58320r +3888$   \\
\hline
\centering{$5$} & \hfill $69984 r$ & \hfill $11664$   \\
\hline
\centering{$6$} & \hfill $11664$ & \hfill $0$   \\
\hline
\end{tabular}
\renewcommand{\arraystretch}{1}
\caption{Coefficients polynomials in $U(r,Q)$ and $V(r,Q)$}
\label{table:Polynomial-coeffs-U-and-V}
\end{table}

Now insert $m(r)$ and $M(r)$ into \eqref{E:U-and-V-expanded-in-Q} and expand $U(r,m(r)) - V(r,M(r))$ out as a rational function of the form
\[
\sum b_n r^n.
\]
Using Table \ref{table:Polynomial-coeffs-U-and-V} together with degree considerations, we see that $b_n = 0$ for $n < -18$ and $n > 6$.
Some of the remaining $b_j$, including $b_5$ and $b_6$, are also equal to zero; see Table \ref{table:exact-of-coeffs-of-P}, noting that $\beta_n = b_{n-18}$.
But it is easily checked from \eqref{E:U-and-V-expanded-in-Q} and Table \ref{table:Polynomial-coeffs-U-and-V} that $b_{-18} \neq 0$, which leads to the definition of the related polynomial
\begin{align*}
P(r) &= r^{18}(U(r,m(r)) - V(r,M(r))) = \sum_{n=0}^{22} \beta_n r^n
\end{align*}
The exact values of the $\beta_n$ are listed in Table \ref{table:exact-of-coeffs-of-P}.
But we can explain the next step of the argument just by considering the signs of the $\beta_n$ in Table \ref{table:signs-of-coeffs-of-P}.

\begin{table}[ht]
\centering
\begin{tblr}{ |p{.22 cm}|p{.22 cm}|p{.22 cm}|p{.22 cm}|p{.22 cm}|p{.22 cm}|p{.22 cm}|p{.22 cm}|p{.22 cm}|p{.22 cm}|p{.22 cm}|p{.22 cm}|p{.22 cm}|p{.22 cm}|p{.22 cm}|p{.22 cm}|p{.22 cm}|p{.22 cm}|p{.22 cm}|p{.22 cm}|p{.22 cm}|p{.22 cm}|p{.22 cm}|  }
\hline
$0$  &  $1$  & $2$  & $3$  & $4$  & $5$  & $6$  & $7$  & $8$  & $9$  & $\! 10$  & $\! 11$  & $\! 12$  & $\! 13$  & $\! 14$  & $\! 15$  & $\! 16$  & $\! 17$  & $\! 18$  & $\! 19$  & $\! 20$  & $\! 21$ & $\! 22$  \\
\hline
 $\! +$  & $0$  & \centering{$\! -$}  & \centering{$\! -$}  & \centering{$\! +$}  & \centering{$\! +$}  & \centering{$\! +$}  & \centering{$\! -$}  & \centering{$\! -$}  & \centering{$-$}  & \centering{$-$}  & \centering{$+$}  & \centering{$+$}  & \centering{$+$}  & \centering{$-$}  & \centering{$-$}  & \centering{$-$}  & \centering{$-$}  & \centering{$-$}  & \centering{$-$}  & \centering{$-$}  & \centering{$0$} & \centering{$+$}  \\
\hline
\end{tblr}
\caption{Signs of coefficients $\beta_n$ in $P(r)$}
\label{table:signs-of-coeffs-of-P}
\end{table}

Table~\ref{table:signs-of-coeffs-of-P} shows that the coefficients of $P(r)$ change signs six times when its terms are listed in ascending order; 
Descartes' Rule of Signs thus says that the number of positive real roots of $P$ is either $0$, $2$, $4$ or $6$.
This is not precise enough for our purposes, but we can cut down the number of sign changes by repeatedly differentiating.
After fourteen derivatives, Table~\ref{table:signs-of-coeffs-of-P} shows the resulting polynomial has {\em exactly one} coefficient sign change when the coefficients are listed in ascending order.
Consequently, $P^{(14)}$ has a unique positive real root.

The remaining computations are nothing more than differentiation and direct polynomial evaluation: $P(r)$ and its first thirteen derivatives are evaluated at $r=\tfrac{2}{3}$, while $P^{(14)}(r)$ is evaluated at $r=0$ and $\tfrac{2}{3}$.
These calculations are tedious by hand, but they can be worked out in totality using Table~\ref{table:exact-of-coeffs-of-P}. 
(See also the supplementary Mathematica notebook found by following the link given in \cite{Edholm_Leray_nb}.)

Evaluation shows that the unique root of $P^{(14)}(r)$ lies in the interval $(0,\tfrac{2}{3})$. Indeed,
\begin{align*}
&P^{(14)}(0) = -\frac{2\,159\,106\,379\,702\,272}{5^7} \approx -2.76366 \cdot 10^{10}, \\
&P^{(14)}\Big(\frac{2}{3}\Big) = \frac{18\,441\,535\,745\,869\,667\,168\,145\,408}{5^7} \approx 2.36052 \cdot 10^{20}.
\end{align*}
We thus conclude that 
\[
P^{(14)}(r) > 0, \qquad r>\frac{2}{3}.
\]
It can also be seen by direct evaluation that for integers $0 \le n \le 13$,
\[
P^{(n)}\Big(\frac{2}{3}\Big)>0.
\]
Decreasing the number of derivatives one step at a time, we see that for integers $0 \le n \le 13$, it also holds that
\[
P^{(n)}(r)>0, \qquad r > \frac{2}{3}.
\]

In particular, $P(r)>0$ for $r>\tfrac{2}{3}$, so the difference $U(r,m(r))-V(r,M(r))>0$.
This implies $U(r,Q_r)-V(r,Q_r) = W(r,Q_r)>0$, thereby proving \eqref{E:estmate-S(r,Qr)}.
\end{proof}

\begin{table}[ht]
\centering
\renewcommand{\arraystretch}{1}
\begin{tblr}{ |p{.3 cm}|p{3 cm}||p{.4 cm}|p{3 cm}||p{.4 cm}|p{3 cm}|  }
\hline
\centering{$n$}  & \centering{$\beta_n$} & \centering{$7$}  & \centering{$-\dfrac{162\,030\,456}{5^{17}}$} & \centering{$15$}  & \centering{$-\dfrac{3\,195\,801}{5^6}$}  \\
\hline
\centering{$0$} & \centering{ $\dfrac{1\,000\,376\,035\,344}{5^{30}}$} & \centering{$8$} & \centering{$-\dfrac{3\,421\,928\,916}{5^{17}}$} & \centering{$16$} & \centering{$-\dfrac{2\,065\,794\,597}{5^9}$}  \\
\hline
\centering{$1$} & \centering{$0$} & \centering{$9$} & \centering{$-\dfrac{84\,873\,096}{5^{14}}$} & \centering{$17$} & \centering{$-\dfrac{91\,854}{5^2}$}  \\
\hline
\centering{$2$} & \centering{$-\dfrac{857\,465\,173\,152}{5^{27}}$} & \centering{$10$} & \centering{$-\dfrac{922\,948\,992}{5^{15}}$} & \centering{$18$} & \centering{$-\dfrac{629\,807\,157}{2^2 5^6}$}  \\
\hline
\centering{$3$} & \centering{$-\dfrac{47\,636\,954\,064}{5^{25}}$} & \centering{$11$} &  \centering{$\dfrac{17\,635\,968}{5^{11}}$} & \centering{$19$} & \centering{$ -\dfrac{12\,267\,612}{5^4}$}  \\
\hline
\centering{$4$} & \centering{$\dfrac{163\,326\,699\,648}{5^{24}}$} & \centering{$12$} & \centering{$\dfrac{657\,460\,071}{5^{12}}$} & \centering{$20$} & \centering{$-\dfrac{71\,827\,641}{2^2 5^4}$}  \\
\hline
\centering{$5$} & \centering{$\dfrac{6\,805\,279\,152}{5^{21}}$} & \centering{$13$} &  \centering{$\dfrac{10\,471\,356}{5^9}$} & \centering{$21$} & \centering{$0$}  \\
\hline
\centering{$6$} & \centering{$\dfrac{9\,694\,822\,284}{5^{20}}$} & \centering{$14$} & \centering{$-\dfrac{619\,164}{5^9}$} & \centering{$22$} & \centering{$\dfrac{455\,625}{2}$}  \\
\hline
\end{tblr}
\renewcommand{\arraystretch}{1}
\caption{Exact values of coefficients in $P(r) = \sum \beta_n r^n$}
\label{table:exact-of-coeffs-of-P}
\end{table}

\begin{remark}\label{R:expansion-of-Q-at-infty}
Applying the cubic formula to the polynomial $p_r(x)$ given in \eqref{E:appendix-p_r}, we have
\begin{align*}
&Q_r = \frac{1}{6\alpha_r}\left( 3+25r^2 + (1-r)\alpha_r + \alpha_r^2 \right), \qquad \mathrm{where}  \\
&\alpha_r = \left(125r^3 +36r - 3\sqrt{375r^4+69r^2-3} \right)^{1/3}. \label{E:def-of-alpha_r}
\end{align*}

The bounds $M(r)$ and $m(r)$ in Lemma \ref{L:Qr-bound-appendix} come from the Taylor expansion of $Q_r$ at $\infty$:
\begin{equation*}
Q_r = \frac{3r}{2} + \frac{1}{6} + \frac{3}{25r} - \frac{21}{3125r^3} + O\Big(\frac{1}{r^4}\Big).
\end{equation*}
It should be emphasized that the negative degree terms in $M(r)$ and $m(r)$ are crucial to the proof of Lemma \ref{L:estmate-S(r,Qr)}; simply using affine functions to bound $Q_r$ is not sufficient to prove the estimate on $S(r,Q_r)$.

Similarly, the bound on $S(r,Q_r)$ also comes from its the Taylor expansion at $\infty$:
\begin{equation*}
S(r,Q_r) = \frac{16}{3125r^3} + O\Big(\frac{1}{r^5}\Big)
\end{equation*}
This method of generating candidates for sufficiently sharp bounds is likely to have application for many other polygamma inequalities, and more generally, in other problems in which the Euler-Maclaurin formula is utilized.
\hfill $\lozenge$
\end{remark}

\bibliographystyle{acm}
\bibliography{EdhShe2024a}

\begin{thebibliography}{10}

\bibitem{AbrSteBook}
{\sc {Abramowitz}, M., and {Stegun}, I.~A.}
\newblock {\em Handbook of Mathematical Functions with Formulas, Graphs, and Mathematical Tables}, ninth {D}over printing, tenth {GPO} printing~ed.
\newblock Dover, New York City, 1964.

\bibitem{Alzer1997}
{\sc Alzer, H.}
\newblock On some inequalities for the gamma and psi functions.
\newblock {\em Math. Comp. 66}, 217 (1997), 373--389.

\bibitem{Alzer1998}
{\sc Alzer, H.}
\newblock Inequalities for the gamma and polygamma functions.
\newblock {\em Abh. Math. Sem. Univ. Hamburg 68\/} (1998), 363--372.

\bibitem{AndPasSigBook04}
{\sc Andersson, M., Passare, M., and Sigurdsson, R.}
\newblock {\em Complex convexity and analytic functionals}, vol.~225 of {\em Progress in Mathematics}.
\newblock Birkh\"auser Verlag, Basel, 2004.

\bibitem{Bar16}
{\sc Barrett, D.~E.}
\newblock Holomorphic projection and duality for domains in complex projective space.
\newblock {\em Trans. Amer. Math. Soc. 368}, 2 (2016), 827--850.

\bibitem{BarEdh20}
{\sc Barrett, D.~E., and Edholm, L.~D.}
\newblock The {L}eray transform: {F}actorization, dual {CR} structures, and model hypersurfaces in {CP}2.
\newblock {\em Adv. Math. 364\/} (2020), 107012.

\bibitem{BarEdh22}
{\sc Barrett, D.~E., and Edholm, L.~D.}
\newblock High frequency behavior of the {L}eray transform: model hypersurfaces and projective duality.
\newblock {\em (to appear) Indiana Univ. Math. J.\/} (2024).

\bibitem{BarLan09}
{\sc Barrett, D.~E., and Lanzani, L.}
\newblock The spectrum of the {L}eray transform for convex {R}einhardt domains in {$\mathbb{C}^2$}.
\newblock {\em J. Funct. Anal. 257}, 9 (2009), 2780--2819.

\bibitem{Bol05}
{\sc Bolt, M.}
\newblock A geometric characterization: complex ellipsoids and the {B}ochner-{M}artinelli kernel.
\newblock {\em Illinois J. Math. 49}, 3 (2005), 811--826.

\bibitem{Edholm_Leray_nb}
{\sc {Edholm,\,\,L.\,\,D.}}
\newblock Leray measures 2024 {M}athematica notebook.
\newblock \url{https://github.com/ledholm/Leray-measures-2024-Mathematica-nb/releases/tag/v1.0}, 2024.

\bibitem{GKPBook}
{\sc Graham, R.~L., Knuth, D.~E., and Patashnik, O.}
\newblock {\em Concrete mathematics}, second~ed.
\newblock Addison-Wesley Publishing Company, Reading, MA, 1994.
\newblock A foundation for computer science.

\bibitem{GuoGuoQi_2010}
{\sc Guo, B.-N., Guo, S., and Qi, F.}
\newblock Complete monotonicity of some functions involving polygamma functions.
\newblock {\em J. Comput. Appl. Math. 233}, 9 (2010), 2149--2160.

\bibitem{GuoQi_2012_AAM}
{\sc Guo, B.-N., and Qi, F.}
\newblock Necessary and sufficient conditions for functions involving the tri- and tetra-gamma functions to be completely monotonic.
\newblock {\em Adv. in Appl. Math. 44}, 1 (2010), 71--83.

\bibitem{GuoQi_2013_PAMS}
{\sc Guo, B.-N., and Qi, F.}
\newblock Refinements of lower bounds for polygamma functions.
\newblock {\em Proc. Amer. Math. Soc. 141}, 3 (2013), 1007--1015.

\bibitem{GuoQiZhao_2012}
{\sc Guo, B.-N., Qi, F., and Zhao, J.-L.}
\newblock Complete monotonicity of two functions involving the tri- and tetra-gamma functions.
\newblock {\em Period. Math. Hungar. 65}, 1 (2012), 147--155.

\bibitem{GuoQiSri_2012}
{\sc Guo, S., Qi, F., and Srivastava, H.~M.}
\newblock A class of logarithmically completely monotonic functions related to the gamma function with applications.
\newblock {\em Integral Transforms Spec. Funct. 23}, 8 (2012), 557--566.

\bibitem{LanSte13}
{\sc Lanzani, L., and Stein, E.~M.}
\newblock Cauchy-type integrals in several complex variables.
\newblock {\em Bull. Math. Sci. 3}, 2 (2013), 241--285.

\bibitem{LanSte14}
{\sc Lanzani, L., and Stein, E.~M.}
\newblock The {C}auchy integral in {$\mathbb{C}^n$} for domains with minimal smoothness.
\newblock {\em Adv. Math. 264\/} (2014), 776--830.

\bibitem{LanSte17c}
{\sc Lanzani, L., and Stein, E.~M.}
\newblock The role of an integration identity in the analysis of the {C}auchy-{L}eray transform.
\newblock {\em Sci. China Math. 60}, 11 (2017), 1923--1936.

\bibitem{LanSte19}
{\sc Lanzani, L., and Stein, E.~M.}
\newblock The {C}auchy-{L}eray integral: counterexamples to the {$L^p$}-theory.
\newblock {\em Indiana Univ. Math. J. 68}, 5 (2019), 1609--1621.

\bibitem{Lee1986}
{\sc Lee, J.~M.}
\newblock The {F}efferman metric and pseudo-{H}ermitian invariants.
\newblock {\em Trans. Amer. Math. Soc. 296}, 1 (1986), 411--429.

\bibitem{Merkle_2005}
{\sc Merkle, M.}
\newblock Gurland's ratio for the gamma function.
\newblock {\em Comput. Math. Appl. 49}, 2-3 (2005), 389--406.

\bibitem{Range86}
{\sc Range, R.~M.}
\newblock {\em Holomorphic functions and integral representations in several complex variables}, vol.~108 of {\em Graduate Texts in Mathematics}.
\newblock Springer-Verlag, New York, 1986.

\bibitem{Wang2004}
{\sc Wang, X.}
\newblock A simple proof of {D}escartes's {R}ule of {S}igns.
\newblock {\em The American Mathematical Monthly 111}, 6 (2004), 525--526.

\bibitem{WidderBook}
{\sc Widder, D.~V.}
\newblock {\em The {L}aplace {T}ransform}.
\newblock Princeton Mathematical Series, vol. 6. Princeton University Press, Princeton, NJ, 1941.

\end{thebibliography}

\end{document}